\newcommand{\<}{\kern.0833em}
\newtheorem{theorem}{Theorem}
\newtheorem{lemma}[theorem]{Lemma}
\newtheorem{corollary}[theorem]{Corollary}
\newtheorem{proposition}[theorem]{Proposition}
\newtheorem{definition}[theorem]{Definition}
\newtheorem{question}[theorem]{Question}
\newcommand{\fb}{\mathbf}
\newcommand{\R}{\mathbb R}
\newcommand{\E}{\mathbb E}
\newcommand{\M}{\mathbf{Metr}}
\newcommand{\rad}{\mathrm{rad}}
\newcommand{\mr}{\mathrm{map\mbox{-}rad}}
\newcommand{\diam}{\mathrm{diam}}
\newcommand{\xlabel}{\stepcounter{equation}
  \gdef\@currentlabel{\p@equation\theequation}{\rm(\@currentlabel)}}
\newenvironment{xlist}
  {\begin{list}{\xlabel}
    {\setlength{\rightmargin}{20pt}
     \setlength{\leftmargin}{37pt}
     \setlength{\labelsep}{20pt}
     \setlength{\labelwidth}{20pt}}}
  {\end{list}}
\begin{document}

\title{Mapping radii of metric spaces%
\thanks{2000 Mathematics Subject Classifications.
Primary: 54E40.
Secondary: 46B20, 46E15, 52A40.
\protect\\\indent
Keywords: nonexpansive map between metric spaces,
maximum radius of image, convex subset of a normed vector space.
\protect\\\indent
Any updates, errata, related references etc., learned of
after publication
will be noted at http://math.berkeley.edu/\protect\linebreak[0]%
{$\!\sim$}gbergman\protect\linebreak[0]/papers/\,.
}}
\author{George M. Bergman}
\maketitle

\begin{center}{\em Dedicated to the memory of David Gale}\end{center}

\begin{abstract}
It is known that every closed curve of length $\leq 4$ in $\R^n$ $(n>0)$
can be surrounded by a sphere of radius~$1,$ and that this is the best
bound.
Letting $S$ denote the circle of circumference $4,$ with the
arc-length metric, we here express this fact by saying that the
{\em mapping radius} of $S$ in $\R^n$ is~$1.$

Tools are developed for estimating the mapping radius of
a metric space $X$ in a metric space $Y.$
In particular, it is shown that for $X$ a bounded metric space, the
supremum of the mapping radii of $X$ in all convex subsets of normed
metric spaces is equal to the infimum of the $\sup$ norms of all
convex linear combinations of the functions $d(x,-):X\rightarrow\R$
$(x\in X).$

Several explicit mapping radii are calculated, and
open questions noted.
\end{abstract}

\section{The definition, and three examples.}\label{S.intro}

\begin{definition}\label{D.imr}
We will denote by $\M$ the category whose objects are
metric spaces, and whose morphisms are nonexpansive maps.
That is, for metric spaces $X$ and $Y$ we let
\begin{xlist}\item\label{x.MXY}
$\M(X,Y)\ =\ \{f:X\rightarrow Y\mid(\forall\,x_0,\<x_1\in X)\ \,
d(f(x_0),f(x_1))\leq d(x_0,x_1)\}.$
\end{xlist}
Throughout this note, a {\em map} of metric spaces will mean
a morphism in $\M.$

Given a nonempty subset $A$ of a metric space $Y,$ we define its
{\em radius} by
\begin{xlist}\item\label{x.rad}
$\rad_Y(A)\ =\ \inf_{y\in Y}\ \sup_{a\in A}\ d(a,y),$
\end{xlist}
a nonnegative real number or $+\infty.$
For metric spaces $X$ and $Y,$ we define the {\em mapping radius}
of $X$ in $Y$ by
\begin{xlist}\item\label{x.ir}
$\mr(X,Y)\ =\ \sup_{f\in\M(X,Y)}\ \rad_Y(f(X))\ \\[6pt]
\hspace*{.98in}
=\ \sup_{f\in\M(X,Y)}\ \inf_{y\in Y}\ \sup_{x\in X}\ d(f(x),y).$
\end{xlist}
If $X$ is a metric space and $\fb{Y}$ a class of metric spaces, we
likewise define
\begin{xlist}\item\label{x.irfb}
$\mr(X,\fb{Y})\ =\ \sup_{Y\in \fb{Y}}\ \mr(X,Y)\\[6pt]
\hspace*{.99in}
=\ \sup_{Y\in \fb{Y},\,f\in\M(X,Y)}\ \inf_{y\in Y}\ %
\sup_{x\in X}\ d(f(x),y).$
\end{xlist}
\textup{(}The term ``mapping radius'' occurs occasionally in complex
analysis with an unrelated meaning \textup{\cite[Def.\,7.11]{EH}.)}

All {\em vector spaces} in this note will be over the field of real
numbers unless the contrary is stated.
\end{definition}

The result stated in the first sentence of the
abstract has been discovered many times
\cite{C+K}, \cite{Chern}, \cite{N}, \cite{R+HS}, \cite{JW}.
(Usually, the length of the closed curve is given as $1$
and the radius of the sphere as $1/4,$ but
the scaled-up version will be more convenient here.)
Let us obtain it in somewhat greater generality.

\begin{lemma}\label{L.S}
Let $S$ denote the circle of circumference $4,$ with the arc-length
metric.
Then for any nonzero normed vector space $V,$ we have $\mr(S,V)=1.$
\end{lemma}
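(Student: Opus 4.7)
The plan is to prove $\mr(S,V) = 1$ by establishing matching upper and lower bounds. The upper bound $\mr(S,V) \le 1$ is the substantive half, asserting that every nonexpansive map $f\colon S \to V$ has image lying in some closed ball of radius~$1$. The lower bound $\mr(S,V) \ge 1$ is handled by an explicit construction.

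For the upper bound, I would use the classical antipodal-midpoint trick. Fix an arbitrary point $p \in S$, and let $p'$ denote its antipode, so that $d_S(p,p') = 2$. Given any $f \in \M(S,V)$, set $c = \tfrac{1}{2}(f(p) + f(p'))$. For an arbitrary $q \in S$,
\[
\|f(q) - c\| \;=\; \tfrac{1}{2}\,\bigl\|(f(q)-f(p)) + (f(q)-f(p'))\bigr\| \;\le\; \tfrac{1}{2}\bigl(\|f(q)-f(p)\| + \|f(q)-f(p')\|\bigr),
\]
which by nonexpansiveness is at most $\tfrac{1}{2}(d_S(q,p) + d_S(q,p'))$. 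The proof then reduces to the geometric identity $d_S(q,p) + d_S(q,p') = 2$ for every $q \in S$. This is a short case analysis: $q$ lies on one of the two semicircular arcs from $p$ to $p'$ (each of length $2$), and the arc-length distances from $q$ to the two endpoints of that arc sum exactly to its length. Hence $\|f(q) - c\| \le 1$ for every $q$, giving $\rad_V(f(S)) \le 1$.

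For the lower bound, I would exhibit a single map whose image has radius at least~$1$. Since $V$ is nonzero, pick $v \in V$ with $\|v\| = 1$, fix a basepoint $p_0 \in S$, and define $f\colon S \to V$ by $f(t) = d_S(t, p_0)\,v$. This is nonexpansive because $t \mapsto d_S(t,p_0)$ is $1$-Lipschitz on $S$ (triangle inequality) and scalar multiplication by the unit vector $v$ is an isometric embedding $\R \hookrightarrow V$. Since the distance function on $S$ takes all values in $[0,2]$, the image is the segment $\{sv : 0 \le s \le 2\}$, whose two endpoints are at distance~$2$ in $V$; so for any candidate center $y \in V$, the triangle inequality forces $\max(\|y\|, \|2v - y\|) \ge \tfrac{1}{2}\|2v\| = 1$, giving $\rad_V(f(S)) \ge 1$.

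The main obstacle is really just the small combinatorial identity $d_S(q,p) + d_S(q,p') = 2$ underpinning the upper bound; once that is in hand, the rest is bookkeeping with the triangle inequality. The lower bound is essentially automatic from the existence of a unit vector in $V$ together with the $1$-Lipschitz distance function on $S$.
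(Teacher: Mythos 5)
Your proposal is correct and follows essentially the same route as the paper: the upper bound via the midpoint $c=\tfrac12(f(p)+f(p'))$ of the images of an antipodal pair together with the identity $d_S(q,p)+d_S(q,p')=2$, and the lower bound via a nonexpansive map onto a length-$2$ segment in a one-dimensional subspace (your explicit formula $t\mapsto d_S(t,p_0)\,v$ is exactly the ``fold it flat'' construction the paper uses, and is even the variant the paper itself later spells out in the proof of Lemma~\ref{L.3S}). No gaps; nothing further is needed.
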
\begin{proof}
In $V,$ any $\!1\!$-dimensional subspace $U$ is isometric to the real
line $\R,$ and we can map $S$ into $\R,$ and hence into $U,$
by ``folding it flat'', getting for image an interval of length~$2.$
Since this interval has points at distance~$2$ apart,
its radius in $V$ cannot be less than $1,$ so $\mr(S,V)\geq 1.$

For the reverse inequality, consider any map $f: S\rightarrow V.$
We wish to find a point $y\in V$ having
distance $\leq 1$ from every point of $f(S).$
Let $p$ and $q$ be any two antipodal points of
$S,$ and let
\begin{xlist}\item[]
$y=(f(p)+f(q))/2.$
\end{xlist}
Every point $x\in S$ lies on a length-$\!2\!$ arc between
$p$ and $q$ in $S,$ hence $d(p,x)+d(q,x)=2,$ hence
$d(f(p),f(x))+d(f(q),f(x))\leq 2,$ i.e.,
$(d(f(p),f(x))+d(f(q),f(x)))/2\leq 1,$ so
$d((f(p)+f(q))/2,f(x))\leq 1,$ as claimed.
\end{proof}

Let us make explicit the argument used at the very last step above.
It is the $c_1=c_2=1/2$ case of

\begin{xlist}\item\label{x.ci}
If $c_1,\dots,c_n$ are nonnegative real numbers summing to $1,$
and $v_1,\dots,v_n$ are elements of a normed vector space $V,$
then for all $w\in V,$
$d(\sum\,c_i\,v_i,\,w)\,\leq\,\sum\,c_i\,d(v_i,\,w).$
\end{xlist}
This can be seen by writing the left-hand side as
$||(\sum\,c_i\<v_i)-w||=||\sum\,c_i\<(v_i-w)||\leq
\sum\,||c_i\<(v_i-w)||=\sum\,c_i\,d(v_i,\,w).$
\vspace{6pt}

Consider next the union $X$ of two circles $S_0$
and $S_1,$ each of circumference~$4,$ intersecting in a pair of
points antipodal in each (e.g., take for $S_0$ and
$S_1$ any two distinct great circles on
a sphere of circumference~$4),$ again with the arc-length metric.
We can show that this $X$ also has mapping radius $\leq 1$ in $V$ by the
same argument as before, except that where we previously
used an arbitrary pair of antipodal points, we are now
forced to use precisely the pair at which our circles intersect.
We are not so restricted in the example showing that radius~$1$ can
actually be achieved -- we can stretch one circle taut between any two
antipodal points, and for most choices of those points, we have a
great deal of freedom as to what to do with the other circle.
In any case, we have

\begin{lemma}\label{L.2S}
Let $X$ be the union of two circles $S_0$ and $S_1,$ each of
circumference~$4,$ intersecting in a pair of points antipodal in each,
with the arc-length metric.
Then for any nonzero normed vector space $V,$ we have $\mr(X,V)=1.$\qed
\end{lemma}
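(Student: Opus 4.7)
The plan is to mirror both halves of the proof of Lemma~\ref{L.S}, with the forced pair $p,q = S_0 \cap S_1$ playing the role of the free antipodal pair used there. For the upper bound $\mr(X,V)\leq 1$, given $f\in\M(X,V)$ I would set $y=(f(p)+f(q))/2$ and verify $d(y,f(x))\leq 1$ for every $x\in X$. The only nontrivial input is the identity $d_X(p,x)+d_X(q,x)=2$: any $x$ lies in one of the two circles $S_i$, in which $p,q$ are antipodal, so traversing the arc of $S_i$ through $x$ from $p$ to $q$ has total length $2$; any path that uses the other circle must re-enter $\{p,q\}$ and is therefore at least as long. Granted this, the nonexpansiveness of $f$ together with (\ref{x.ci}) at $c_1=c_2=1/2$ yields $d(y,f(x))\leq 1$, exactly as in Lemma~\ref{L.S}.

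For the lower bound $\mr(X,V)\geq 1$, I would choose a one-dimensional subspace $U\subseteq V$, identify it with $\R$, and fold each of $S_0$ and $S_1$ flat onto the interval $[-1,1]\subseteq U$, in both cases sending $p$ to $-1$ and $q$ to $1$; this is possible precisely because $p,q$ are antipodal in each $S_i$. The two partial maps agree on $S_0\cap S_1=\{p,q\}$ and glue to a single $f:X\to V$. Nonexpansiveness within each $S_i$ is the first half of the proof of Lemma~\ref{L.S}. For $x\in S_0$ and $x'\in S_1$, every path in $X$ between them must pass through some $r\in\{p,q\}$, so $d_X(x,x')=\min_{r\in\{p,q\}}(d_{S_0}(x,r)+d_{S_1}(r,x'))$, and the triangle inequality in $U$ applied through $f(r)$ then gives $|f(x)-f(x')|\leq d_X(x,x')$. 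Since $f(X)=[-1,1]$, which contains points at distance~$2$, $\rad_V(f(X))\geq 1$.

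The only substantive step is the identity $d_X(p,x)+d_X(q,x)=2$ underlying the upper-bound argument; the rest is routine adaptation of Lemma~\ref{L.S}. The hypothesis that $S_0$ and $S_1$ meet at points antipodal in each circle is exactly what ensures that shortest paths in $X$ from $p$ or $q$ to any other point stay within a single $S_i$, and hence that the inequality $d(f(p),f(x))+d(f(q),f(x))\leq 2$ survives intact.
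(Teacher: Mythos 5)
Your proposal is correct and follows essentially the same route as the paper, which proves the upper bound by taking $y=(f(p)+f(q))/2$ for the forced intersection pair $\{p,q\}$ and the lower bound by stretching a circle taut onto an interval of length $2$; your folding of both circles onto $[-1,1]$ is a specific instance of the paper's construction, and your verification that $d_X(p,x)+d_X(q,x)=2$ (shortest paths from $p$ or $q$ stay in a single circle) supplies exactly the detail the paper leaves implicit.
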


We could apply the same method to any number of circles
joined at a common pair of antipodal points; but let us move in a
different direction.
Again picturing $S_0$ and $S_1$ as great circles on a sphere of
circumference~$4$ in Euclidean $\!3\!$-space, assume they meet at
right angles, and call
their points of intersection the north and south poles.
Let us bring in a third circle, $S_2,$ the equator, and let
$X=S_0\cup S_1\cup S_2,$ again with the arc length metric.

We no longer have a pair of antipodal points belonging to all three
circles; rather, we have three pairs of points,
$S_1\cap S_2=\{p_0,q_0\},$
$S_2\cap S_0=\{p_1,q_1\},$ and $S_0\cap S_1=\{p_2,q_2\}.$
Now given a normed vector space $V$ and
a map $f:X\rightarrow V$ in $\M,$ suppose we let
\begin{xlist}\item\label{x.sum/6}
$y\ =\ (f(p_0)+f(q_0)+f(p_1)+f(q_1)+f(p_2)+f(q_2))\</\<6.$
\end{xlist}
What can we conclude about $d(y,f(x))$ for $x\in X$?

Say $x\in S_2.$
Since both $\{p_0,q_0\},$ and $\{p_1,q_1\}$ are pairs of
antipodal points of $S_2,$ we have
$d(p_0,x)+d(q_0,x)=d(p_1,x)+d(q_1,x)=2.$
The same will not be true of $d(p_2,x)$ and $d(q_2,x).$
To determine how large these can get, let us take
$x\in S_2$ as far as possible (under our arclength metric)
from the intersections of $S_2$ with our
two circles through the poles $p_2$ and $q_2.$
This happens when $x$ is at the
midpoint of any of the quadrants into which $p_0,$ $q_0,$ $p_1$ and
$q_1$ divide $S_2;$ in this situation, $d(p_2,x)=d(q_2,x)=3/2.$
(Each quadrant has arc-length~$1,$ and one has to go a quadrant
and a half to get from $p_2$ or $q_2$ to $x.)$
We see, in fact, that for any $x\in S_2$ we have
$d(p_2,x)=d(q_2,x)\leq 3/2,$ hence $d(p_2,x)+d(q_2,x)\leq 3.$
Now applying any map $f:X\rightarrow V,$
and invoking~(\ref{x.ci}) with all $c_i=1/6,$ we see that
for $y$ as in~(\ref{x.sum/6}) we have $d(y,f(x))\leq (2+2+3)/6 = 7/6.$
We have proved this for $x\in S_2;$ by symmetry, it is also true
for $x$ lying on $S_0$ or $S_1.$
This allows us to conclude,
not that $\mr(X,V)=1$ as in the preceding two cases, but that
\begin{xlist}\item\label{x.7/6}
$\mr(X,V)\ \leq\ 7/6.$
\end{xlist}

And in fact, there do exist maps $f:X\rightarrow V$ with
$\rad_V(f(X))>1.$
To describe such a map, note that $X$ can be identified with
the $\!1\!$-skeleton of a regular octahedron of edge~$1.$
In the next few paragraphs, let us put aside our picture of
$X$ in terms of great circles on a
sphere, and replace it with this (straight-edged) octahedral skeleton.

If we look at our octahedron
in Euclidean $\!3\!$-space from a direction perpendicular to one
of its faces, we see that face and the opposite one as overlapping,
oppositely oriented equilateral
triangles, with vertices joined by the remaining 6 edges, which
look like a regular hexagon.
Now suppose we regard these two opposite triangular
faces as made of stiff wire, and the other 6 edges as made of string.
Then if we bring the planes of the two
wire triangles closer to one another, the string edges will loosen.
Suppose, however, that we rotate the top triangle
clockwise as they approach one another, so that three of those
strings are kept taut, while the other three become still looser.
When the planes of our wire triangles meet, those wire triangles
will coincide, and the three taut string edges will fall together with
the three edges of that triangle, while the three loose
ones become loops, hanging from the three vertices.
Let us lock the two wire triangles together,
and pull the three loops taut, radially
away from the center of symmetry of the triangle.

What we then have is the image of a certain map $f$ in $\M$
from our octahedral skeleton $X$ into a plane, which we can
identify with $\R^2.$
We see that $\rad_{\R^2}(f(X))$ will be the distance from the center of
symmetry of our
figure to each of the three points to which the drawn-out loops are
stretched; i.e., the sum of the distance from the center of
symmetry to each vertex of the triangle, and the length
of the stretched loop attached thereto.
The former distance is two thirds of the altitude of the triangle,
$(2/3)(\sqrt{3}/2)=1/\sqrt{3}\,,$ and the latter length is $1/2$
(since the loop doubles back), so
\begin{xlist}\item\label{x.1/2+1/sqrt3}
$\rad_{\R^2}(f(X))\ =\ 1/\sqrt{3}+1/2.$\quad
Hence, $\mr(X,\R^2)\ \geq\ 1/\sqrt{3}+1/2\ >\ 1.$
\end{xlist}

This shows that our three-circle space does indeed behave
differently from the preceding one- and two-circle examples.

However $1/\sqrt{3}+1/2\approx 1.0773,$ which falls well short
of the upper bound $7/6\approx 1.1667$ of~(\ref{x.7/6}).

We can overcome this deficiency by using a different norm on $\R^2.$
Let $V$ be $\R^2$ with the norm whose unit disc is the
region enclosed by a regular hexagon $H$ of unit side.
Note that the 6 sides of $H$ are parallel to the 6 radii joining
$0$ to the vertices of $H,$ hence these sides have length~$1$ in the new
metric, just as in the Euclidean metric, and indeed, any line segment
in one of those directions will have the same length in both metrics.
Now let us map $X,$ still pictured as the $\!1\!$-skeleton
of a regular octahedron of side~$1$ in Euclidean $\!3\!$-space, into
$V$ so that, as before, two opposite triangles are embedded
isometrically (now under the metric of $V),$ and made to fall together
with each other and with three of the other edges, while the
remaining three edges form loops that are stretched radially outward
as far as they will go.
Let us moreover take the sides
of our image-triangle to be parallel to three sides of~$H.$

The map $X\rightarrow \R^2$ that does this is almost the same
one as before.
The 9 edges that end up parallel to edges of $H$ are mapped
exactly as before, since distances in those directions
are the same in the two metrics.
The three folded loops end up set-theoretically
smaller than before,
since the new metric is greater in their direction than is the Euclidean
metric, and they go out a distance $1/2$ in the {\em new} metric
before turning back; but they still contribute the value $1/2$ to the
calculation of the radius of our image of $X.$
The significant change in that calculation concerns the distance
from the center of our triangle to its three vertices.
Looking at our triangle as a translate of one of the 6 equilateral
triangles into which $H$ is decomposed by its radii,
we see that the altitude of that triangle is
equal to its side in this metric (since the midpoint of a side of $H$
has the same distance,~$1,$ from the origin as a vertex of $H$ does).
Hence the distance from the center to a vertex is $2/3.$
Adding to this the distance $1/2$ from that vertex to
the end of the loop attached to it, we get $2/3+1/2=7/6.$
Assuming that the center of our triangle is indeed the
minimizing point defining the radius (i.e., is a value of $y$ that
yields the infimum~(\ref{x.rad}); we will verify this
in Lemma~\ref{L.sym}), this achieves the upper bound~(\ref{x.7/6}).
Summarizing, and making a few supplementary observations, we have

\begin{lemma}\label{L.3S}
Let $X$ be the $\!1\!$-skeleton of a regular octahedron of side~$1,$
under the arc-length metric.
Then for any nonzero normed vector space $V,$
\begin{xlist}\item\label{x.1<7/6}
$1\ \leq\ \mr(X,V)\ \leq\ 7/6.$
\end{xlist}

The exact value of $\mr(X,V)$ is~$1$ if $V$ is $\!1\!$-dimensional,
is $\geq 1/\sqrt{3}+1/2$ if $V$ is $\R^n$ $(n\geq 2)$ under
the Euclidean norm, and is $7/6$ if $V$ is $\R^2$ under the norm
having for unit circle a regular hexagon.
\end{lemma}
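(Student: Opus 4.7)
The bound $\mr(X,V)\leq 7/6$ for arbitrary nonzero normed $V$ is already established by~(\ref{x.7/6}) in the discussion preceding the lemma, so the work remaining is a universal lower bound $\mr(X,V)\geq 1$, the matching upper bound $\mr(X,V)\leq 1$ in the one-dimensional case, and the sharp lower bounds in the Euclidean and hexagonal cases.

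For the universal lower bound I would fix any unit vector $v_0\in V$ and define $f:X\to V$ by $f(x)=(1-d_X(x,p_1))\<v_0$. This is nonexpansive by the triangle inequality for $d_X(\cdot,p_1)$; and since $d_X(\cdot,p_1)$ attains $0$ at $p_1$ and $2$ at the antipodal point $q_1$, the image is exactly the segment $\{t\<v_0:t\in[-1,1]\}$, whose radius in $V$ is $1$ (upper bounded by $1$ via the center $y=0$, and lower bounded by $1$ since the endpoints of the segment are at distance $2$).

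For the matching one-dimensional upper bound I would use that in any one-dimensional normed space the radius equals half the diameter, reducing the claim to $\diam(X)\leq 2$. Two points lying on a common $S_i$ are within $\diam(S_i)=2$. For $x\in S_i$ and $y\in S_j$ with $i\neq j$, write $\{p_k,q_k\}=S_i\cap S_j$ and set $a=d_{S_i}(x,p_k)$, $b=d_{S_i}(x,q_k)$, $c=d_{S_j}(y,p_k)$, $d=d_{S_j}(y,q_k)$; antipodality of $p_k,q_k$ on each circle gives $a+b=c+d=2$, whence $d_X(x,y)\leq\min(a+c,\<b+d)\leq\tfrac12(a+b+c+d)=2$.

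For the Euclidean and hexagonal lower bounds I would invoke the ``flattened octahedron with three stretched loops'' map $f$ constructed in the preceding discussion, and show that the three loop endpoints $z_0,z_1,z_2\in f(X)$ alone already force the bound. In both norms the $z_i$ are arranged $120^\circ$ apart about the centroid $c$ of the image-triangle at a common distance $R$ equal to $1/\sqrt{3}+1/2$ Euclideanly and $7/6$ in the hex norm; so $z_i-z_j=R(u_i-u_j)$ where $u_i,u_j$ are unit vectors in two of the three centroid-to-vertex directions (which point toward hex vertices, since the triangle's sides are parallel to hex sides). One computes $u_i-u_j$ to have Euclidean norm $\sqrt{3}$; and in the hex case $u_i-u_j$ lies along a hex-side-midpoint direction, where the hex norm exceeds the Euclidean one by the factor $2/\sqrt{3}$, giving hex norm $2$. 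Hence the distance $\|z_i-z_j\|$ equals $1+\sqrt{3}/2$ Euclideanly and $7/3$ hex-wise. The Euclidean lower bound then follows from the identity $\sum_i\|z_i-y\|^2=3R^2+3\|c-y\|^2\geq 3R^2$, which forces $\max_i\|z_i-y\|\geq R=1/\sqrt{3}+1/2$. The hexagonal lower bound follows from the triangle inequality $\|z_i-z_j\|\leq\|z_i-y\|+\|z_j-y\|$, which forces $\max_i\|z_i-y\|\geq 7/6$; combined with~(\ref{x.7/6}) this yields $\mr(X,V)=7/6$. The only delicate computation is the hex-norm identity $\|u_i-u_j\|=2$; everything else is routine bookkeeping.
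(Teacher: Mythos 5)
Most of your proposal is sound: the upper bound via~(\ref{x.7/6}), the universal lower bound via $x\mapsto(1-d(x,p_1))\<v_0$, the diameter computation giving the one-dimensional case, and the Euclidean lower bound via the identity $\sum_i||z_i-y||^2=3R^2+3||c-y||^2$ (a nice self-contained substitute for the paper's appeal to Lemma~\ref{L.sym}) are all correct. The gap is in the hexagonal case, and it is genuine.

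You assert that the centroid-to-vertex directions of the image triangle point toward vertices of $H.$ They do not: the triangle's sides are parallel to sides of $H,$ which are in turn parallel to the radii of $H$ joining $0$ to its vertices; the centroid-to-vertex direction is along an altitude, hence perpendicular to a side, hence perpendicular to a vertex direction of $H$ --- i.e., it is a \emph{side-midpoint} direction of $H.$ (This is exactly why the text preceding the lemma computes the centroid-to-vertex distance as $2/3$: the altitude, lying in a side-midpoint direction, picks up the factor $2/\sqrt3.$) With the correct identification, the hex-unit vectors $u_i$ have Euclidean length $\sqrt3/2,$ the difference $u_i-u_j$ has Euclidean length $3/2$ and lies along a \emph{vertex} direction of $H,$ where the two norms agree; so $||u_i-u_j||=3/2,$ not $2,$ and $||z_i-z_j||=(7/6)(3/2)=7/4,$ not $7/3.$ The pairwise triangle inequality then gives only $\max_i||z_i-y||\geq 7/8<7/6,$ so the argument fails --- and cannot be repaired by choosing a different pair, since all three pairwise distances are $7/4.$ Any correct argument must use the three points $z_0,z_1,z_2$ simultaneously. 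The paper does this via Lemma~\ref{L.sym}: the order-$\!3\!$ rotation group about the centroid $c$ consists of affine isometries of the hexagonal norm preserving $f(X)$ and fixing only $c,$ so $\rad_V(f(X))=\sup_a d(a,c)=7/6.$ (Your Euclidean sum-of-squares identity is another way to avoid Lemma~\ref{L.sym}, but it has no analogue in the hexagonal norm.)
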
\begin{proof}
The lower bound~$1$ in~(\ref{x.1<7/6}) is gotten as in the
last full sentence before Lemma~\ref{L.2S},
by regarding $X$ as $S_0\cup S_1\cup S_2,$ straightening out one of
these circles to cover a segment of length~$2$ in a $\!1\!$-dimensional
subspace of $V,$ and letting the other two circles collapse
into that line in any way.
(Or for a construction that relies less on geometric intuition,
pick any $p\in S_0,$ map $X$ into $\R$ by the function $d(p,-),$ note
that this map sends $p$ and the point antipodal to $p$ on $S_0$ to $0$
and $2,$ respectively, and embed $\R$ in $V.)$
As before, such an image of $X$ has points $2$ units apart, and
so has radius $\geq 1$ in $V$ by the triangle inequality.
The upper bound $7/6$ was obtained in~(\ref{x.7/6}).

To see that when $V$ itself is $\!1\!$-dimensional, the value~$1$ is
not exceeded, note that the distance between any two
points of $X$ is $\leq 2.$
Hence the image of $X$ under any map into
such a $V$ is a segment of length $\leq 2,$ hence of radius $\leq 1.$

The lower bounds $1/\sqrt{3}+1/2$ and $7/6$ for $V=\R^2$
with the two indicated norms were obtained above by explicit mappings.
\end{proof}

Let us now justify the assumption we made just before the statement
of the above lemma, about the center from which we computed the radius.

\begin{lemma}\label{L.sym}
Let $V$ be a normed vector space, $A$ a nonempty subset of $V,$
and $G$ a finite group of isometries of $V$ which preserve $A.$
Then
\begin{xlist}\item\label{x.radbyVG}
$\rad_V(A)\ =\ \inf_{y\in V^G}\ \sup_{a\in A}\ d(a,y),$
\end{xlist}
where $V^G$ is the fixed-point set of~$G.$

In particular, if $V^G$ is a singleton $\{v_0\},$ then
\begin{xlist}\item\label{x.v0}
$\rad_V(A)\ =\ \sup_{a\in A}\ d(a,v_0).$
\end{xlist}
\end{lemma}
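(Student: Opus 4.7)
The plan is to use a group-averaging (symmetrization) argument, which is the canonical way to exploit the $G$-symmetry. The ``$\le$'' direction of~(\ref{x.radbyVG}) is immediate from $V^G \subseteq V$ and the definition of $\rad_V(A)$, so I focus on the reverse inequality: given arbitrary $y \in V$, I need to produce $\bar y \in V^G$ with $\sup_{a\in A} d(a, \bar y) \le \sup_{a\in A} d(a, y)$. The natural candidate is the group-average $\bar y = \tfrac{1}{|G|}\sum_{g\in G} g(y)$, which makes sense because $V$ is a vector space.

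The radius estimate is then a direct application of~(\ref{x.ci}) with equal coefficients $1/|G|$: for each $a \in A$,
\[
d(a, \bar y) \;\le\; \frac{1}{|G|}\sum_{g\in G} d(a, g(y)) \;=\; \frac{1}{|G|}\sum_{g\in G} d(g^{-1}(a), y),
\]
where the equality uses that each $g$ is an isometry of $V$. Since $G$ preserves $A$, each $g^{-1}(a)$ lies in $A$, so the right-hand side is $\le \sup_{a'\in A} d(a', y)$, which is what I want.

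The step I expect to require the most care is verifying $\bar y \in V^G$. For any $h \in G$ one wants
\[
h(\bar y) \;=\; \frac{1}{|G|}\sum_{g\in G} h(g(y)) \;=\; \frac{1}{|G|}\sum_{g\in G} (hg)(y) \;=\; \bar y,
\]
the last equality by the bijection $g\mapsto hg$ of $G$. This hinges on $h$ commuting with the affine average, which needs $h$ to be affine, not merely a metric isometry. Since every element of $G$ is a bijection of $V$ (having an inverse in $G$), the Mazur--Ulam theorem supplies exactly this: every surjective isometry of a real normed vector space is affine. This is the only non-elementary ingredient.

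Combining the two facts gives $\inf_{y'\in V^G}\sup_{a\in A} d(a, y') \le \sup_{a\in A} d(a, y)$ for every $y\in V$, and taking the infimum over $y$ yields~(\ref{x.radbyVG}). The ``in particular'' clause is then immediate: when $V^G = \{v_0\}$, the infimum in~(\ref{x.radbyVG}) is over a singleton and equals $\sup_{a\in A} d(a, v_0)$.
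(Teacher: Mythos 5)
Your proof is correct and is essentially the paper's own argument: average $y$ over the group, apply~(\ref{x.ci}) with coefficients $1/|G|$, move each $g$ onto $a$ using the isometry property, and use $G$-invariance of $A$ to bound by $\sup_{a'\in A}d(a',y)$. The only difference is that you explicitly justify $\bar y\in V^G$ via Mazur--Ulam (surjective isometries of a normed space are affine), a point the paper's proof leaves implicit; that is a legitimate and slightly more careful touch, not a divergence in method.
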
\begin{proof}
Given $v\in V,$ let
\begin{xlist}\item\label{x.y=avg}
$y\ =\ |G|^{-1}\sum_{g\in G}\ g\<v,$
\end{xlist}
and note that this point lies in $V^G,$ and that for any $a\in A,$
\begin{xlist}\item\label{x.d*Sgva}
$d(a,y)\ =\ %
d(a,\<|G|^{-1}\sum_{g\in G}\<gv)\ \leq\ %
\sum_{g\in G}|G|^{-1}d(a,\<gv)\ =\ %
|G|^{-1}\sum_{g\in G}\<d(g^{-1}a,\<v)\ \\[6pt]\hspace*{.48in}\leq\ %
|G|^{-1}\sum_{g\in G}\<\sup_{b\in A}\ d(b,v)\ =\ %
\sup_{b\in A}\ d(b,v).$
\end{xlist}
Here the first inequality holds by~(\ref{x.ci}) and the second
by considering $b=g^{-1}a.$
Hence for $y\in V^G$ defined by~(\ref{x.y=avg}),
$\sup_{a\in A} d(a,y)\leq \sup_{a\in A} d(a,v),$ from
which~(\ref{x.radbyVG}) follows.
The final assertion is a special case.
\end{proof}

For $V=\R^2$ with a regular hexagon as unit circle, the group
$G$ generated by a rotation by $2\pi/3$ about any point is an isometry
of $V,$ and if we take that point to be the center of symmetry
of the set $f(X)$ we were looking at above,
$G$ preserves $f(X)$ and has that center of symmetry as unique
fixed point; so the above lemma justifies our description of the
radius of $f(X)$ in terms of distance from that point.
In the earlier computation using the Euclidean metric on $\R^2,$
we ``saw'' that the radius was measured from the center of symmetry;
this is now likewise justified by Lemma~\ref{L.sym}.

Lemma~\ref{L.3S} leaves open

\begin{question}\label{Q.7/6or}
For $V=\R^2$ under the Euclidean norm, and $X$ the $\!1\!$-skeleton
of a regular octahedron of side~$1,$ where does $\mr(X,V)$
lie within $[1/\sqrt{3}+1/2,\ 7/6]$?

For $V=\R^n,$ again with the Euclidean norm, but $n>2,$
is the answer the same?
\end{question}

Having whetted our appetite with this
example, let us prove some general results.

\section{General properties of mapping radii.}\label{S.gen}

\begin{lemma}\label{L.facts}
Let $X,$ $X',$ $Y,$ $Y'$ be nonempty metric spaces,
$\fb{Y}$ and $\fb{Y}'$ classes of such metric spaces,
and $V$ and $V'$ normed vector spaces.\\[6pt]
\textup{(i)}\ \ If there exists a surjective map $h:X\rightarrow X'$
\textup{(}or more generally, a map $X\rightarrow X'$ with
dense image\textup{)} in $\M,$ then $\mr(X',Y)\leq\mr(X,Y).$\\[6pt]
\textup{(ii)}\ \ If $Y'\subseteq Y,$ then for any nonempty
subset $A$ of $Y'$ we have $\rad_{Y'}(A)\geq\rad_Y(A).$
Here equality will hold if $Y'$ is a {\em retract} of $Y;$
i.e., if the inclusion of $Y'$ in $Y$ has a left inverse in $\M.$

Hence if $Y'$ is a retract of $Y,$ then $\mr(X,Y')\leq\mr(X,Y).$
In particular, this is true if $Y$ is a normed vector space
\textup{(}or more generally, a convex subset of such a space\textup{)}
and $Y'$ the fixed subspace \textup{(}respectively,
subset\textup{)} of a finite group $G$ of affine
isometries of $Y.$\\[6pt]
\textup{(iii)}\ \ If $\fb{Y'}\subseteq\fb{Y},$ then
$\mr(X,\fb{Y}')\leq\mr(X,\fb{Y}).$\\[6pt]
\textup{(iv)}\ \ In contrast to~\textup{(i)} and~\textup{(ii)},
for $X'\subseteq X,$ either of the numbers $\mr(X',Y)$
and $\mr(X,Y)$ can be greater than the other, and if $Y'\subseteq Y,$
or if $Y'$ is a surjective image of $Y$ in $\M,$ either of the numbers
$\mr(X,Y')$ and $\mr(X,Y)$ can be greater than the other.
\end{lemma}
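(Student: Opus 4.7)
Each of the three settings (subset of source, subset of target, surjective image of target) requires a counterexample in both directions, for six cases in all. The direction in which the mapping radius drops upon restriction is handled uniformly by collapsing to a singleton: the one-point metric space has mapping radius $0$ in every target, is a subspace of every nonempty metric space, and is the image of any metric space under the constant (hence nonexpansive) map. Thus, taking $X = \{p, q\}$ with $d(p, q) = 1$ and $Y = \R$ so that $\mr(X, Y) = 1/2 > 0$, a singleton $X'$, a singleton $Y' \subseteq Y$, or a constant surjection from $Y$ onto a singleton $Y'$ each witnesses the relevant strict inequality.

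For the opposite direction in the $X' \subseteq X$ case, I would choose a target $Y$ too coarse to admit nonconstant maps from $X$ but fine enough to admit one from $X'$. Explicitly, take
\begin{xlist}\item[]
$X = \{0, 1/2, 1\}, \qquad X' = \{0, 1\}, \qquad Y = \{0, 1\},$
\end{xlist}
all with the metric inherited from $\R$. Since the only nonzero distance in $Y$ equals $1$ while $d(0, 1/2) = 1/2$ in $X$, any nonexpansive $f: X \to Y$ forces $f(0) = f(1/2) = f(1)$, giving $\mr(X, Y) = 0$. But the inclusion $X' \hookrightarrow Y$ is nonexpansive, with image $\{0, 1\}$ of radius $1$ in $Y$ (each of the two available candidate centers is at distance $1$ from the other point), so $\mr(X', Y) = 1 > 0 = \mr(X, Y)$.

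For the opposite direction in the $Y' \subseteq Y$ case, I would take $X = \{0, 1\}$, $Y = \R$, and $Y' = \{0, 1\} \subseteq \R$. The identity exhibits $\{0, 1\}$ with radius $1$ in $Y'$ but radius $1/2$ in $Y$ (via the center $1/2 \in Y \setminus Y'$). Every nonexpansive image of $X$ in $\R$ has diameter at most $1$ and hence radius at most $1/2$, so $\mr(X, Y') = 1 > 1/2 = \mr(X, Y)$.

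For the remaining direction in the surjective-image case, I would take $Y = \R$, $Y' = S$ (the circle of circumference $4$ with arc-length metric), and $h: \R \to S$ the wrapping map $x \mapsto x \bmod 4$, which is nonexpansive (distances up to $2$ are preserved, larger distances strictly contract) and surjective. With $X = S$, Lemma \ref{L.S} gives $\mr(X, Y) = 1$, while the identity $S \to S$ has image $S$ of radius $2$ in $S$ (every point has an antipode at distance $2$). Thus $\mr(X, Y') \geq 2 > 1 = \mr(X, Y)$. The principal obstacle is locating these asymmetric examples: one needs to recognize that a coarse or truncated target suppresses maps out of a ``dense'' source, and that a target with fewer candidate centers can inflate the radius of a given image.
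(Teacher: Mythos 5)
Your counterexamples for part (iv) are all correct, and they are in essentially the same spirit as the paper's: the paper also uses tiny subsets of $\R$ with the induced metric (e.g.\ $\mr(\{0,1,2\},\{0,2\})=0$ versus $\mr(\{0,2\},\{0,2\})=2$ for the source-subset case, and the chain $\{0,3\}\rightarrow\{0,2\}\rightarrow\{0,1\}$ for the surjective-image case), exploiting exactly the two mechanisms you identify -- a coarse target forcing maps from a ``denser'' source to collapse, and a target with fewer candidate centers inflating the radius. Your use of the circle $S$ and the wrapping map $\R\rightarrow S$ for the last case is heavier machinery than needed (the paper gets by with two-point spaces), but it works: the wrapping map is nonexpansive and surjective, and $\rad_S(S)=2>1=\mr(S,\R).$

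The genuine gap is that the lemma has four parts and you have proved only the fourth. Parts (i) and (iii) are short (each is the observation that one supremum is taken over a set of real numbers containing the other, plus, for (i), the fact that $\rad_Y(fh(X))=\rad_Y(f(X'))$ when $fh(X)$ is dense in $f(X')$), but part (ii) has real content that cannot be waved away: you must show that a retraction $e:Y\rightarrow Y'$ forces $\rad_{Y'}(A)\leq\rad_Y(A)$ (because $d(a,e(y))\leq d(a,y)$ for $a\in A\subseteq Y'$), and then -- for the ``in particular'' clause -- that the fixed set of a finite group $G$ of affine isometries of a convex $Y$ is a retract, via the averaging map $e(v)=|G|^{-1}\sum_{g\in G}gv,$ which must be checked to land in $Y,$ to fix $Y^G$ pointwise, and to be nonexpansive. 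That last verification (the paper does it by $\|e(v)-e(w)\|=\|e(v-w)\|\leq\|v-w\|$, using affineness) is the step the rest of the paper leans on repeatedly, e.g.\ in Corollary~\ref{C._<_<}, so omitting it leaves the proof incomplete.
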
\begin{proof}
(i)\ \ Suppose $h:X\rightarrow X'$ has dense image.
Then for any $f:X'\rightarrow Y,$ $fh(X)$ is dense in $f(X'),$
hence $\rad_Y(fh(X))=\rad_Y(f(X')),$ so the terms of the supremum
defining $\mr(X,Y)$ include all the terms of the supremum
defining $\mr(X',Y),$ from which the asserted inequality follows.

(ii)\ \ The terms of the infimum defining $\rad_Y(A)$
include the terms of the infimum defining $\rad_{Y'}(A),$
giving the first inequality.

If there exists a retraction $e$ of $Y$ onto $Y',$ then for every
$y\in Y$ and $a\in A$ we have $d(a,e(y))\leq d(a,y),$ since $e$
is nonexpansive and fixes points of $A.$
Hence $\sup_{a\in A}d(a,e(y))\leq\sup_{a\in A}d(a,y),$ and taking
the infimum of this over $y\in Y,$ we get $\rad_{Y'}(A)\leq\rad_Y(A).$
This and the previous inequality give the asserted equality.
Since $\M(X,Y')\subseteq\M(X,Y),$
we also get $\mr(X,Y')\leq\mr(X,Y),$ as  claimed.

If $Y$ is a convex subset of a normed vector space, and $Y'$ the fixed
set of a finite group $G$ as in the final assertion, note that
the function $e(v)=|G|^{-1}\sum_{g\in G}gv$ used in the proof of
Lemma~\ref{L.sym} is nonexpansive:
\begin{xlist}\item[]
$d(e(v),e(w))~=\ ||e(v)-e(w)||~=\ ||e(v-w)||~\leq\ ||v-w||~=\ d(v,w),$
\end{xlist}
and is a retraction of $Y$ onto $Y^G.$

(iii)\ \ This is again a case of suprema of a smaller and
a larger set of real numbers.

(iv)\ \ The assertion for $X'\subseteq X$ can be seen from the following
mapping radii, where subsets of $\R$ are given the induced metric:
\begin{xlist}\item[]
$\mr(\{0\},\,\{0,2\})=0,\quad
\mr(\{0,2\},\,\{0,2\})=2,\quad
\mr(\{0,1,2\},\,\{0,2\})=0.$
\end{xlist}
The assertion for $Y'\subseteq Y$ is shown by the observations
\begin{xlist}\item[]
$\mr(\{0,2\},\,\{0\})=0,\quad
\mr(\{0,2\},\,\{0,2\})=2,\quad
\mr(\{0,2\},\,\{0,1,2\})=1.$
\end{xlist}
Finally, to get the case where $Y'$ is a surjective image of
$Y,$ note that we have surjections
$\{0,3\}\rightarrow \{0,2\}\rightarrow \{0,1\}$ in $\M,$ and that
\begin{xlist}\item[]
$\mr(\{0,2\},\,\{0,3\})=0,\quad
\mr(\{0,2\},\,\{0,2\})=2,\quad
\mr(\{0,2\},\,\{0,1\})=1.$
\end{xlist}
(With a bit more work, one can construct sets
$X,\ Y_0,\ Y_1,\ Y_2\subseteq\R$ such that each $Y_{i+1}$
is {\em both} a subset and a surjective image of $Y_i,$ and
such that $\mr(X,Y_0)<\mr(X,Y_1)>\mr(X,Y_2).)$
\end{proof}

To state consequences of the above results, let us fix some notation.

\begin{definition}\label{D.diam&&}
For $n\geq 0,$ $\!n\!$-dimensional Euclidean space, i.e., $\R^n$ with
the Euclidean norm, will be denoted $\E^n.$
The class of all Euclidean spaces, $\{\E^n\mid n\geq 0\},$
will be denoted $\bf{Euc}.$

The class of all normed vector spaces, regarded as metric
spaces, will be denoted $\bf{NmV}.$
The class of all convex subsets of normed vector spaces, regarded as
metric spaces, will be denoted $\bf{Conv}.$

The {\em diameter} of a metric space $X$ will be defined by
$\diam(X)\ =\ \sup_{x,y\in X} d(x,y).$
\end{definition}

\begin{corollary}\label{C._<_<}
If $X$ is a nonempty metric space, then
\begin{xlist}\item\label{x.E1E2...}
$\mr(X,\E^1)\ \leq\ \mr(X,\E^2)\ \leq
\ \dots\ \leq\ \mr(X,\E^n)\ \leq\ \dots\ ,$
\end{xlist}
with supremum $\mr(X,\fb{Euc}).$
Further,
\begin{xlist}\item\label{x.d/2...d}
$\diam(X)/2\ =\ \mr(X,\,\E^1)\ \leq\ %
\mr(X,\,\fb{Euc})\ \leq\ \mr(X,\,\fb{NmV})\\*[6pt]\hspace*{.72in}\ %
\leq\ \mr(X,\,\fb{Conv})\ \leq\ \mr(X,\,\fb{\M})\ =\ \rad_X(X)\ %
\leq\ \diam(X).$
\end{xlist}
\end{corollary}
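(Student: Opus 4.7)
The plan is to verify each inequality in~(\ref{x.d/2...d}) in turn, which along the way will also yield~(\ref{x.E1E2...}) and the identification of its supremum with $\mr(X,\fb{Euc})$.

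For~(\ref{x.E1E2...}), I would observe that the orthogonal projection $\E^{n+1}\to\E^n$ onto the first $n$ coordinates is a nonexpansive retraction, so Lemma~\ref{L.facts}(ii) gives $\mr(X,\E^n)\leq\mr(X,\E^{n+1})$. Since the class $\fb{Euc}$ is exactly $\{\E^n\mid n\geq 0\}$, the supremum $\sup_n\mr(X,\E^n)$ is by definition $\mr(X,\fb{Euc})$.

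For the main chain in~(\ref{x.d/2...d}), I would begin with $\diam(X)/2=\mr(X,\E^1)$, which splits into two bounds. The $\leq$ direction: any nonexpansive $f\colon X\to\R$ sends $X$ into an interval of length at most $\diam(X)$, whose radius in $\R$ is at most $\diam(X)/2$. The $\geq$ direction: for any $x_0,x_1\in X$, the map $f(x)=d(x_0,x)$ lies in $\M(X,\R)$ and sends $x_0\mapsto 0$, $x_1\mapsto d(x_0,x_1)$, so $\rad_\R(f(X))\geq d(x_0,x_1)/2$; taking a supremum over $x_0,x_1$ yields $\mr(X,\E^1)\geq\diam(X)/2$. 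The next three inequalities $\mr(X,\fb{Euc})\leq\mr(X,\fb{NmV})\leq\mr(X,\fb{Conv})\leq\mr(X,\fb{\M})$ all follow from Lemma~\ref{L.facts}(iii) via the evident containments of classes: every Euclidean space is a normed vector space, every normed vector space is a convex subset of itself, and every convex subset of a normed space is a metric space.

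The one step with real content is $\mr(X,\fb{\M})=\rad_X(X)$. The direction $\geq$ is immediate by taking $Y=X$ and $f=\mathrm{id}_X$ in the supremum defining $\mr$. For $\leq$, given any nonexpansive $f\colon X\to Y$, I would restrict the infimum defining $\rad_Y(f(X))$ to points of the form $y=f(x_0)$ with $x_0\in X$, obtaining
\[
\rad_Y(f(X))\ \leq\ \inf_{x_0\in X}\,\sup_{x\in X}\,d(f(x),f(x_0))\ \leq\ \inf_{x_0\in X}\,\sup_{x\in X}\,d(x,x_0)\ =\ \rad_X(X),
\]
where the middle step uses nonexpansivity of $f$. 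The final inequality $\rad_X(X)\leq\diam(X)$ follows because $\sup_{x\in X}d(x,x_0)\leq\diam(X)$ for any fixed $x_0\in X$. No step is a real obstacle; the only subtle point is the restriction of the $y$-infimum to the image $f(X)$ in the last equality, which is what pins the upper bound at $\rad_X(X)$ rather than the coarser $\diam(X)$.
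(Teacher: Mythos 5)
Your proposal is correct and takes essentially the same route as the paper's proof: the paper also obtains~(\ref{x.E1E2...}) from Lemma~\ref{L.facts}(ii) (citing the reflection of $\E^{n+1}$ fixing $\E^n$ rather than the orthogonal projection, which amounts to the same retraction), proves $\diam(X)/2=\mr(X,\E^1)$ by the same two bounds using the maps $d(x_0,-),$ derives the middle inequalities from Lemma~\ref{L.facts}(iii), and gets $\mr(X,\fb{\M})=\rad_X(X)$ from the identity map together with the fact that nonexpansive maps do not increase radius (which your explicit computation verifies).
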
\begin{proof}
Since $\E^n$ is the fixed subspace of a reflection
of $\E^{n+1},$ the final assertion
of Lemma~\ref{L.facts}(ii) gives~(\ref{x.E1E2...}).
(We could have put ``$\!\<0=\mr(X,\,\E^0)\leq\<\!$'' at the
left end of~(\ref{x.E1E2...}); but this would complicate some
references we will want to make to~(\ref{x.E1E2...}) later.)
By definition, $\mr(X,\fb{Euc})$ is the supremum of these values.

To see the initial equality of~(\ref{x.d/2...d}), note on the one hand
that under any nonexpansive map $f:X\rightarrow\E^1,$ the
images of any two points of
$X$ are $\leq\diam(X)$ apart, hence $f(X)$ must
lie in an interval of length $\leq\diam(X),$ and any interval in $\E^1$
has radius half its length, so $\mr(X,\,\E^1)\leq\diam(X)/2.$
On the other hand,
for $x,y\in X,$ the function $d(x,-):X\rightarrow\E^1$
is nonexpansive, and the images of $x$ and $y$ under
this map are $d(x,y)$ apart, whence the radius
of $f(X)$ is at least half this value.
Taking the supremum over all $x$ and $y,$ we get
$\mr(X,\,\E^1)\geq\diam(X)/2.$

The next four steps, inequalities among mapping radii, are
instances of Lemma~\ref{L.facts}(iii).
In the equality following these, the direction
``$\!\leq\!$'' simply says that nonexpansive
maps are radius-nonincreasing, while ``$\!\geq\!$'' holds because
one of the maps in the supremum defining
$\mr(X,\,\fb{\M})$ is the identity map of $X.$
The final inequality is immediate.
\end{proof}

We note in passing some cases where these mapping radii are
easy to evaluate.

\begin{corollary}\label{C.r=d/2}
If a metric space $X$ satisfies $\rad_X(X)=\diam(X)/2,$
then all terms of~\textup{(\ref{x.d/2...d})}
through $\rad_X(X)$ are equal \textup{(}and hence also equal to
all terms of~\textup{(\ref{x.E1E2...}))}.

In particular, this is true whenever \textup{(i)}~$X$ is a finite tree,
with edges of arbitrary positive lengths, under the arc-length metric,
or \textup{(ii)}~$X$ has an isometry $\rho$ with a fixed point $0$
such that for every $x\in X,$ $d(x,\rho(x))=2\,d(x,0).$
\end{corollary}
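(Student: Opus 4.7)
The statement divides into the general squeeze and the verifications of (i) and (ii), and I would handle them in that order.

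For the main assertion, the chain~(\ref{x.d/2...d}) in Corollary~\ref{C._<_<} begins with the equality $\mr(X,\E^1)=\diam(X)/2$ and ends with $\rad_X(X)\le\diam(X)$. Under the hypothesis $\rad_X(X)=\diam(X)/2$, the two ends coincide, forcing every intermediate term to equal $\diam(X)/2$. Each term of~(\ref{x.E1E2...}) likewise lies between $\mr(X,\E^1)$ and $\mr(X,\fb{Euc})$, both of which now equal $\diam(X)/2$, so those terms collapse as well.

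To establish (i) and (ii) I invoke the automatic bound $\rad_X(X)\ge\diam(X)/2$ valid in any metric space (for any $y,a_1,a_2$ one has $d(a_1,a_2)\le 2\sup_a d(a,y)$; take the inf over $y$ and sup over $a_1,a_2$). So in each case it suffices to exhibit a point $c\in X$ with $d(c,x)\le\diam(X)/2$ for all $x$. Case (ii) is immediate: take $c=0$, and observe that the hypothesis gives $2\,d(x,0)=d(x,\rho(x))\le\diam(X)$ for every $x$.

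For case (i), set $D=\diam(X)$, choose a diameter-realizing geodesic between endpoints $p$ and $q$, and take $c$ to be its midpoint $m$. For $x\in X$, the three points $x,p,q$ in the $\R$-tree $X$ admit a unique median $s$ satisfying $d(x,p)=d(x,s)+d(s,p)$, $d(x,q)=d(x,s)+d(s,q)$, and $d(p,q)=d(p,s)+d(s,q)$. Since $m$ lies on the geodesic from $p$ to $q$, it lies on one of the two sub-geodesics emanating from $s$; by the symmetry between $p$ and $q$ I may assume it lies on the sub-geodesic from $s$ to $q$. Then $m$ lies on the geodesic from $x$ to $q$, so $d(x,q)=d(x,m)+d(m,q)=d(x,m)+D/2\le D$, forcing $d(x,m)\le D/2$. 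The main obstacle is this case (i); once one accepts the standard existence of the median $s$ (easily seen by tracing the unique $x$-to-$p$ geodesic until it first meets the $p$-to-$q$ geodesic), the distance estimate is a one-line computation.
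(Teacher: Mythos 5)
Your proposal is correct and follows essentially the same route as the paper: the general assertion by squeezing the chain~(\ref{x.d/2...d}) between its equal endpoints, case~(ii) by taking the fixed point $0$ as center, and case~(i) by taking the midpoint of a diameter-realizing path and using the tree structure to bound distances to it. Your median-point formulation is just a repackaging of the paper's observation that the path from the midpoint to an arbitrary point $z$ can meet at most one of the two half-diameters nontrivially, so the two arguments coincide in substance.
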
\begin{proof}
The first sentence is clear from~(\ref{x.d/2...d}).
To get the two classes of examples, it suffices to show in
each case that $\rad_X(X)\leq\diam(X)/2,$ since~(\ref{x.d/2...d}) gives
the reverse inequality.

In case (i), $X$ is compact, so we may choose $x,y\in X$ with
$d(x,y)=\diam(X).$
The unique non-self-intersecting path between
$x$ and $y$ is isometric to a closed interval, and
so has a midpoint $p,$ satisfying $d(x,p)=d(y,p)=\diam(X)/2;$ it now
suffices to show that $d(z,p)\leq\diam(X)/2$ for all $z\in X.$
Consider the unique non-self-intersecting path from $p$ to $z.$
Because $X$ is a tree, that path out of $p$ cannot have nontrivial
intersection with both the path from $p$ to $x$ and the path
from $p$ to $y;$ assume it meets the latter only in $p.$
Then the unique non-self-intersecting path from $z$ to $y$
is the union of the path from $z$ to $p$ and the path from $p$ to $y,$
and we know that it has length $\leq\diam(X),$ so subtracting off
$\diam(X)/2,$ the length of the path from $p$ to $y,$ we conclude that
the length of the path from $z$ to $p$ is $\leq\diam(X)/2,$ as required.

In case (ii), we have $\rad_X(X)\leq\sup_{x\in X} d(x,0)=
\sup_{x\in X} d(x,\rho(x))/2\leq\diam(X)/2.$
\end{proof}

Examples falling under case~(ii) above include all centrally symmetric
subsets of normed vector spaces containing $0,$ under the
induced metric, and a hemisphere under the geodesic metric.

A less trivial result, now.
Recall that in proving the upper bounds on the mapping radii of
Lemmas~\ref{L.S}, \ref{L.2S}~and~\ref{L.3S}, we in effect
chose formal weighted combinations of points of $X,$ and used these
to specify convex linear combinations of points of $f(X)\subseteq V.$
We abstract this technique below.
In the statement of the theorem, as a convenient
way to express formal weighted combinations of points
of $X,$ we use {\em probability measures} on $X$ with finite support.
(Recall that a probability measure on $X$ is a nonnegative-valued
measure $\mu$ such that $\mu(X)=1,$ and that $\mu$ is said to
have support in a set $X_0$ if it is zero on every subset of $X-X_0.$
Apologies for the double use of ``$\!d\<\!$'' below, for the distance
function of the metric space and the ``$\!d\!$'' of integration.)

\begin{theorem}\label{T.cnv+int}
Let $X$ be a nonempty metric space.
Then
\begin{xlist}\item\label{x.mrConv}
$\mr(X,\fb{Conv})\ =\ %
\inf_\mu\ \sup_{x\in X}\ \int_{z\in X}\,d(x,z)\ d\mu(z),$
\end{xlist}
where the infimum is over
all probability measures $\mu$ on $X$ with finite support.
\end{theorem}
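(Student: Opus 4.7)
The plan is to prove the two inequalities separately.

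For the direction $\leq$, I would generalize the technique of Lemmas~\ref{L.S}, \ref{L.2S}, \ref{L.3S}: given a finitely supported probability measure $\mu=\sum_i c_i\delta_{z_i}$ on $X$, a convex subset $Y$ of a normed vector space, and any $f\in\M(X,Y)$, the convex combination $y=\sum_i c_i f(z_i)$ lies in $Y$, and~(\ref{x.ci}) together with nonexpansiveness of $f$ gives
\[
\|f(x)-y\|\ \leq\ \sum_i c_i\,d(x,z_i)\ =\ \int_X d(x,z)\,d\mu(z)
\]
for every $x\in X$. Hence $\rad_Y(f(X))\leq\sup_x\int d(x,z)\,d\mu(z)$; taking the sup over $f$ and $Y\in\fb{Conv}$, then the inf over $\mu$, yields $\mr(X,\fb{Conv})\leq\inf_\mu\sup_x\int d(x,z)\,d\mu(z)$.

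For the direction $\geq$, the plan is to exhibit a single $Y\in\fb{Conv}$ and $f\in\M(X,Y)$ realizing the right-hand side. Fix any basepoint $x_0\in X$, let $V=\ell^\infty(X)$, and take $f\colon X\to V$ to be the Kuratowski embedding $f(x)(z)=d(x,z)-d(x_0,z)$. The triangle inequality gives $|f(x)(z)|\leq d(x,x_0)$, so $f(x)\in V$; and $\|f(x)-f(x')\|_\infty=\sup_z|d(x,z)-d(x',z)|=d(x,x')$, with equality attained at $z=x$, so $f$ is isometric. Set $Y=\mathrm{co}(f(X))$, the convex hull, a convex subset of $V$.

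The key computation is that any $y\in Y$ has the form $\sum_i c_i f(z_i)$ for a finitely supported probability measure $\mu=\sum_i c_i\delta_{z_i}$ on $X$, and evaluating $f(x)-y$ at the coordinate $z=x$ collapses---the $d(x_0,x)$ contributions cancel because $\sum_i c_i=1$---to
\[
(f(x)-y)(x)\ =\ -\sum_i c_i\,d(z_i,x)\ =\ -\int_X d(x,z)\,d\mu(z).
\]
Hence $\|f(x)-y\|_\infty\geq\int d(x,z)\,d\mu(z)$, so $\sup_x\|f(x)-y\|_\infty\geq\sup_x\int d(x,z)\,d\mu(z)$; taking the inf over $y\in Y$, which amounts to an inf over all finitely supported $\mu$, gives $\rad_Y(f(X))\geq\inf_\mu\sup_x\int d(x,z)\,d\mu(z)$, and then $\mr(X,\fb{Conv})\geq\mr(X,Y)\geq\rad_Y(f(X))$ completes the inequality.

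The main obstacle I expect is the choice of target: inside all of $V=\ell^\infty(X)$ the image $f(X)$ already has radius at most $\diam(X)/2$ (around the coordinate-wise midpoint function), which is generally too small. Passing to $\mathrm{co}(f(X))$ restricts the candidate centers to finite convex combinations of images, converting the infimum over centers into the infimum over finitely supported probability measures on $X$; the evaluation-at-$z=x$ trick, returning precisely the desired integral, is what dualizes the two sides.
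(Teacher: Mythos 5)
Your proof is correct and follows essentially the same route as the paper: the identical convex-combination argument via~(\ref{x.ci}) for ``$\leq$'', and for ``$\geq$'' an embedding of $X$ by distance functions into a sup-normed function space, with the radius in the convex hull bounded below by evaluating $f(x)-y$ at the coordinate $x$. The only difference is your Kuratowski shift by $d(x_0,-)$, which places the image in $\ell^\infty(X)$ even when $X$ is unbounded and so avoids the paper's separate reduction to the bounded case.
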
\begin{proof}
We first prove ``$\!\leq\!$'', imitating the
argument of Lemmas~\ref{L.S},~\ref{L.2S} and~\ref{L.3S}.
We must show, for any nonexpansive
map $f:X\rightarrow C,$ where $C$ is a convex subset of a normed
vector-space $V,$ and any probability measure $\mu$ on $X$ with
finite support, that
\begin{xlist}\item\label{x.rleqint}
$\rad_C(f(X))\ \leq\ \sup_{x\in X}\ \int_{z\in X}\ d(x,z)\ d\mu(z).$
\end{xlist}

For any point $x$ of $X,$ let $\mu_x$ denote the probability
measure on $X$ with singleton support $\{x\}.$
Since the $\mu$ of~(\ref{x.rleqint}) is a probability measure
with finite support, it has the
form $c_1\<\mu_{x_1}+\dots+c_n\<\mu_{x_n},$ where
$x_1,\dots,x_n$ are points of $X,$ and $c_1,\dots,c_n$ are
nonnegative real numbers summing to $1.$
The point $y=\sum c_i\,f(x_i)$ lies in $C,$ so by definition of the
radius, the left-hand side of~(\ref{x.rleqint}) is
$\leq\sup_{x\in X}\ d\<(y,f(x))=
\sup_{x\in X}\ d\<(\sum c_i\,f(x_i),f(x)),$ which
by~(\ref{x.ci}) is $\leq\sup_{x\in X}\ \sum_i\ c_i\,d(f(x_i),f(x)),$
which, because $f$ is nonexpansive,
is $\leq\sup_{x\in X}\,\sum_i\,c_i\,d(x,x_i).$
The sum in this expression is the integral
in~(\ref{x.rleqint}), giving the desired inequality.

In proving the direction ``$\!\geq\!$'' in~(\ref{x.mrConv}),
we may assume the metric space $X$ is bounded, since otherwise
it has infinite diameter, in which case~(\ref{x.d/2...d}) tells us
that the left hand side of~(\ref{x.mrConv}) is infinite.
Assuming boundedness, we shall display a particular embedding $e$
of $X$ in a convex subset $C$ of a normed vector space $U,$ such that
$\rad_C(e(X))$ is greater than or equal to the right-hand
side of~(\ref{x.mrConv}).

Let $U$ be the space of all continuous bounded real-valued
functions on $X,$ under the $\sup$ norm, let $e:X\rightarrow U$ take
each $x\in X$ to the function $d(x,-)$ (this $e$ is easily seen to
be nonexpansive) and let $C$ be the convex hull of $e(X).$
Now for $x\in X,$ its image $e(x)=d(x,-)$
can be written $y\mapsto\int_z d(y,z)\,d\mu_x(z).$
Hence an arbitrary $u\in C,$ i.e., a convex linear combination
of these functions, will have the same form, but with $\mu_x$
replaced by a convex linear combination $\mu$ of the measures $\mu_x,$
i.e., a general probability measure $\mu$ on $X$ with finite support.
For such a function $u,$ and any $x\in X,$ the distance $d(e(x),u)$ in
$C$ is the $\sup$ norm of $u-e(x),$ which
is {\em at least} the value of $u-e(x)$ at $x\in X,$ which
is $u(x)-0=\int_z d(x,z)\,d\mu(z).$
The radius of $e(X)$ in $C$ is thus at least the infimum over all $\mu$
of the supremum over all $x$ of this integral, which is the right-hand
side of~(\ref{x.mrConv}).
\end{proof}

Recall that when we obtained our bound~(\ref{x.7/6}) on the
mapping radius of the $\!1\!$-skeleton of an octahedron, analogy and
good luck led us to the formal linear
combination of points of $X$ used in~(\ref{x.sum/6})
(in effect, a probability measure $\mu),$
which turned out to give the optimal bound.
In general we ask

\begin{question}\label{Q.mu}
Let $X$ be a finite graph with edges of possibly unequal lengths,
under the arc-length metric.
Must there be a probability measure $\mu$ on
$X$ with finite support that realizes
the infimum of~\textup{(\ref{x.mrConv})}?

Is there an algorithm for finding such a $\mu$ if it exists, or
if not, for evaluating~\textup{(\ref{x.mrConv})}?
\end{question}

We cannot expect in general that a measure of the desired sort
will have support in the set of {\em vertices} of the graph $X,$
as happened in Lemma~\ref{L.3S}.
E.g., if $X$ is isometric to a circle with arc-length metric, one can
show that a measure $\mu$ realizes the infimum of~(\ref{x.mrConv})
if and only if it gives equal weight to $p$ and $q$ whenever
$p$ and $q$ are antipodal points;
so if $X$ is, say, an equilateral polygon with an odd number of
vertices, $\mu$ cannot be concentrated in the vertices.

A class of examples generalizing our octahedral
skeleton, which it would be of interest to examine, are
the $\!1\!$-skeleta of cross polytopes~\cite{Coxeter}.

A situation simpler than that of
Question~\ref{Q.mu} is that of a {\em finite} metric space $X.$
Here the determination of the right-hand side of~(\ref{x.mrConv})
is a problem in linear programming;
whether it has an elegant solution I don't know.
The determination of $\mr(X,\E^n)$ for such a space $X$ is,
similarly, in principle, a problem in calculus.

In Corollary~\ref{C.r=d/2}, we saw that the mapping radius is easy to
compute for a space that has ``a robust center''.
Using the preceding theorem, let us
show the same for a space with a pair of ``robust antipodes''.

\begin{corollary}\label{C.antipode}
Suppose the metric space $X$ has a pair of points $p$ and $q$ such that
\begin{xlist}\item\label{x.prq}
$(\forall\,r\in X)\ \ d(p,r)+d(r,q)\ =\ d(p,q).$
\end{xlist}
Then letting $D=d(p,q),$ we have $\diam(X)=D,$ and
$\mr(X,\fb{Conv})=D/2.$
Thus, the terms of~\textup{(\ref{x.d/2...d})}
through $\mr(X,\fb{Conv})$ are all equal to $D/2.$

In particular, this is true if $X$ is the {\em $\!1\!$-skeleton}
of a regular tetrahedron or of a parallelopiped \textup{(}in
particular, of a cube\textup{)}, with the arc-length metric,
or is the {\em $\!0\!$-skeleton} of any of the regular polyhedra
other than the tetrahedron, with metric induced by the
arc-length metric on the $\!1\!$-skeleton of that polyhedron.

The property~\textup{(\ref{x.prq})}
is, of course, inherited by any subspace of $X$ containing $p$ and $q.$
\end{corollary}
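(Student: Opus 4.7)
The plan is to establish the two main equalities first and then verify the concrete examples.

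For the diameter, given any $r,s\in X$, two applications of the triangle inequality yield $d(r,s)\leq d(r,p)+d(p,s)$ and $d(r,s)\leq d(r,q)+d(q,s)$; adding these and using~(\ref{x.prq}) twice gives $2\<d(r,s)\leq 2\<D$, while $d(p,q)=D$ is attained, so $\diam(X)=D$.

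The lower bound $\mr(X,\fb{Conv})\geq D/2$ then follows from~(\ref{x.d/2...d}). For the upper bound we apply Theorem~\ref{T.cnv+int} with the two-point probability measure $\mu=\tfrac12\<\mu_p+\tfrac12\<\mu_q$, where $\mu_p,\mu_q$ are the point masses used in the proof of that theorem. By hypothesis~(\ref{x.prq}), for every $x\in X$ we have
\[
\int_{z\in X} d(x,z)\,d\mu(z)\ =\ \tfrac12\<d(x,p)+\tfrac12\<d(x,q)\ =\ D/2,
\]
independent of $x$, so the supremum over $x$ in~(\ref{x.mrConv}) is exactly $D/2$, giving $\mr(X,\fb{Conv})\leq D/2$. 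Combining the bounds and invoking~(\ref{x.d/2...d}) shows that every term from $\diam(X)/2$ through $\mr(X,\fb{Conv})$ equals $D/2$.

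For each concrete example it then suffices to exhibit $p,q$ and verify~(\ref{x.prq}). For the $\!1\!$-skeleton of a regular tetrahedron with edge length $\ell$, we take $p$ and $q$ to be the midpoints of two non-incident edges; symmetry of the configuration reduces the verification to a representative edge, along which a direct computation shows that the two natural routes from a generic point $r$ to $p$ and to $q$ have lengths summing to $d(p,q)=2\<\ell$. For the $\!1\!$-skeleton of a parallelopiped with edge lengths $a,b,c$, we let $p,q$ be a pair of diagonally opposite vertices; coordinatising the skeleton, one sees that the shortest path from $p$ to any point $r=(x,y,z)$ on an edge has length $x+y+z$ and the shortest path from $r$ to $q$ has length $(a-x)+(b-y)+(c-z)$, summing to $a+b+c=d(p,q)$. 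Finally, for the $\!0\!$-skeleton of the cube, octahedron, dodecahedron, or icosahedron, we take $p,q$ to be a pair of antipodal vertices and verify~(\ref{x.prq}) case by case (Hamming weight for the cube, a single-layer check for the octahedron, and the standard layer decomposition for the icosahedron and dodecahedron; the tetrahedron is excluded because its $\!1\!$-skeleton has diameter~1, so no vertex has a unique antipode). The closing sentence of the statement is tautologous, since~(\ref{x.prq}) is a universally quantified condition that automatically restricts to any sub-metric-space containing $p$ and~$q$.

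The main obstacle is really just recognizing that~(\ref{x.prq}) is custom-tailored to Theorem~\ref{T.cnv+int} via the two-point measure $\tfrac12\<\mu_p+\tfrac12\<\mu_q$; once that observation is made the equalities are immediate, and the remaining labour is the routine geometric verification of~(\ref{x.prq}) in the listed examples.
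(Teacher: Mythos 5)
Your proposal is correct and follows essentially the same route as the paper: the same two-triangle-inequality argument for $\diam(X)=D$, the same application of Theorem~\ref{T.cnv+int} with the measure $\tfrac12\mu_p+\tfrac12\mu_q$, and the same choices of $p,q$ in the examples (the paper allows arbitrary antipodal points on the parallelopiped, but your vertex pair is a special case). The only difference is that you spell out the verifications of~(\ref{x.prq}) which the paper dismisses as ``not hard.''
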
\begin{proof}
For any two points $r,r'\in X,$ we have
\begin{xlist}\item[]
$2\,d(r,r')\<\leq\<(d(r,p)+d(p,r'))\<+\<(d(r,q)+d(q,r'))\,=\,%
(d(p,r)+d(r,q))\<+\<(d(p,r')+d(r',q))\<=\<2D,$
\end{xlist}
so $d(r,r')\leq D,$ whence $\diam(X)=D.$
Now let $\mu$ be the probability measure giving weight $1/2$
to each of $p$ and $q.$
For this $\mu,$ the integral on the right-hand side
of~(\ref{x.mrConv}) has value $D/2$ for all $x,$ hence the supremum
of that integral over $x$ is $D/2,$ hence~(\ref{x.mrConv}) shows that
$\mr(X,\fb{Conv})\leq D/2.$
Comparing with the first term of~(\ref{x.d/2...d}), we see that
all the the terms of~(\ref{x.d/2...d}) through $\mr(X,\fb{Conv})$
(though not, as before, through $\mr(X,\fb{Metr}))$ are equal.

For $X$ the $\!1\!$-skeleton of
a regular tetrahedron, we get~(\ref{x.prq}) on taking
for $p$ and $q$ the midpoints of two opposite edges.
For $X$ the $\!1\!$-skeleton of a parallelopiped,
we can use any two antipodal points (not necessarily vertices.
In picturing this case, it may help to note that $X$ is isometric
to the $\!1\!$-skeleton of a {\em rectangular} parallelopiped.)
In the $\!0\!$-skeleton cases, we use any pair of opposite vertices.
In each case, the verification of~(\ref{x.prq}) is not hard.

The final sentence is clear.
\end{proof}

So, for instance, for the $\!1\!$-skeleta of the tetrahedron and cube
of edge $1,$ the $\!6\!$-tuples of terms of~(\ref{x.d/2...d})
(not distinguishing terms shown connected by equals-signs) are
$(1,1,1,1,\<3/2,\<2)$ and $(3/2,\,3/2,\,3/2,\,3/2,\,3,\,3)$
respectively.
(The reason the last two numbers are equal for
the cube, but distinct for the tetrahedron, is that for the cube,
the function $x\mapsto\sup_y d(x,y)$ is $3$ for all $x,$ while
for the tetrahedron, it ranges from a maximum value $2$ at the midpoints
of the edges to a minimum value $3/2$ at the vertices.
In neither of these cases is the maximum twice the minimum, so
neither of them falls under Corollary~\ref{C.r=d/2}.)

Let us note a curious feature of the construction used in
Theorem~\ref{T.cnv+int}: it has what at first looks
like a universal property (part~(i) of the next result)
but turns out not to be (part~(ii)).

\begin{corollary}[to proof of Theorem~\ref{T.cnv+int}]\label{C.notuniv}
Let $X$ be a bounded metric space, let $U$ be the space of continuous
bounded real-valued functions on $X$ under the $\sup$ norm
\textup{(}cf.\ second half of the proof of
Theorem~\ref{T.cnv+int}\textup{)}, and let $e:X\rightarrow U$ be the
map taking each $x\in X$ to the function $d(x,-).$

Now let $f:X\rightarrow V$ be any map \textup{(}in $\M)$ from $X$
into a normed vector space $V.$
Then\\[6pt]
\textup{(i)}\ \ For every family of points $x_1,\dots,x_n\in X,$
every family $c_1,\dots,c_n$ of nonnegative real numbers summing
to $1,$ and every $x\in X,$ one has
\begin{xlist}\item\label{x.g_<e}
$d(f(x),\,\sum_i c_i\,f(x_i))\ \leq\ d(e(x),\,\sum_i c_i\,e(x_i)).$
\end{xlist}

However,\\[6pt]
\textup{(ii)}\ \ Given points $x_1,\dots,x_n\in X,$ and two
families of nonnegative real numbers $b_1,\dots,b_n$
and $c_1,\dots,c_n,$ each summing to $1,$ it is {\em not necessarily}
true that
\begin{xlist}\item\label{x.gnot_<e}
$d(\sum_i b_i\,f(x_i),\,\sum_i c_i\,f(x_i))\ \leq\ %
d(\sum_i b_i\,e(x_i),\,\sum_i c_i\,e(x_i)).$
\end{xlist}
Thus, the convex hull of $e(X)$ need not admit a map
\textup{(}in $\M)$ to the convex hull of $f(X)$
making a commuting triangle with $e$ and $f.$
\end{corollary}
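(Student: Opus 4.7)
Part (i) should be routine. My plan is to bound both sides of~(\ref{x.g_<e}) by the common quantity $\sum_i c_i\,d(x,x_i)$: the convexity bound~(\ref{x.ci}) followed by nonexpansiveness of $f$ gives $d(f(x),\sum_i c_i f(x_i)) \leq \sum_i c_i\,d(f(x),f(x_i)) \leq \sum_i c_i\,d(x,x_i)$, while on the other side, since $U$ carries the sup norm and $e(y)$ is the function $z\mapsto d(y,z)$, evaluating $e(x) - \sum_i c_i e(x_i)$ at the single point $z=x$ gives absolute value $\sum_i c_i\,d(x,x_i)$; the sup norm is at least this.

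For part (ii), the plan is to produce an explicit small counterexample. Take $X = \{x_1,x_2,x_3,x_4\}$ to be the vertices of a unit square in the Euclidean plane, labeled consecutively so that $x_1 x_2$ and $x_3 x_4$ are a pair of opposite edges, with the induced Euclidean metric. Let $V = \R^2$ with $f$ the (isometric) inclusion, and take $b = (1/2,1/2,0,0)$, $c = (0,0,1/2,1/2)$. Then $\sum b_i f(x_i)$ and $\sum c_i f(x_i)$ are the midpoints of two opposite sides, at distance~$1$ apart. On the $U$-side, the four values on $X$ of the function $\sum b_i e(x_i) - \sum c_i e(x_i)$ compute routinely to $\pm\sqrt{2}/2$, so its sup norm is $\sqrt{2}/2 < 1$, refuting~(\ref{x.gnot_<e}).

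For the concluding ``Thus'' clause, I would use the same example to rule out any nonexpansive $g \colon \mathrm{conv}(e(X)) \to \mathrm{conv}(f(X))$ satisfying $g \circ e = f$. Setting $u_b = \sum b_i e(x_i)$ and $u_c = \sum c_i e(x_i)$, one checks that $d(u_b, e(x_1)) = d(u_b, e(x_2)) = 1/2$; nonexpansiveness of $g$ would therefore force $g(u_b)$ into the intersection of the closed radius-$1/2$ balls about $f(x_1) = (0,0)$ and $f(x_2) = (1,0)$, which, being externally tangent, meet only at $(1/2,0)$. Similarly $g(u_c) = (1/2,1)$ is forced, giving $d(g(u_b), g(u_c)) = 1 > \sqrt{2}/2 = d(u_b,u_c)$ in contradiction with nonexpansiveness. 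The main substantive step throughout is conceptual: spotting that the Kuratowski embedding attenuates the ``perpendicular'' distance between two opposite sides, while the planar Euclidean geometry does not, making the unit square the natural test configuration.
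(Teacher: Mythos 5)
Your proof is correct. Part (i) coincides with the paper's argument: both sides of (\ref{x.g_<e}) are compared to the pivot quantity $\sum_i c_i\,d(x,x_i)$ exactly as in the two halves of the proof of Theorem~\ref{T.cnv+int}. For part (ii) you use a genuinely different counterexample. The paper takes $X$ to be a circle of circumference $4$ with four equally spaced points and maps it to $\E^1$; there the two convex combinations $\sum b_i e(x_i)$ and $\sum c_i e(x_i)$ actually coincide in $U$ (distance $0$), while their images under the induced combination of $f$ differ, so (\ref{x.gnot_<e}) fails as starkly as possible. Your example -- the four vertices of a unit square isometrically included in $\E^2,$ with $b$ and $c$ concentrated on opposite edges -- gives $\sqrt2/2$ on the right of (\ref{x.gnot_<e}) against $1$ on the left, which works just as well (the computations check out: the function $\sum b_i e(x_i)-\sum c_i e(x_i)$ takes only the values $\pm\sqrt2/2$ on the four points). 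Where your route buys something extra is the final ``Thus'' clause. The paper leaves that clause to follow from (ii), which really only rules out the \emph{canonical affine} map $\sum c_i e(x_i)\mapsto\sum c_i f(x_i)$ (in the paper's example that map is not even well defined); indeed, since the paper's target is $\E^1,$ a McShane--Whitney extension followed by truncation to $\mathrm{conv}(f(X))$ does produce \emph{some} nonexpansive $g$ with $g\circ e=f.$ Your two-ball tangency argument, forcing $g(u_b)=(1/2,0)$ and $g(u_c)=(1/2,1)$ and hence $d(g(u_b),g(u_c))=1>\sqrt2/2=d(u_b,u_c),$ excludes \emph{every} nonexpansive commuting map, not merely the affine one, and so establishes the ``Thus'' clause in its strongest reading. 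This is a real improvement in precision over the paper's treatment of that final sentence.
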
\begin{proof}
(i) may be seen by combining the calculations of the last sentence
of the proof of the ``$\!\leq\!$'' direction of Theorem~\ref{T.cnv+int},
which shows that $d\<(f(x),\sum c_i\,f(x_i))\leq
\sum_i\,c_i\,d(x,x_i),$ and the end of the proof of the
``$\!\geq\!$'' direction, which, by evaluating $e(x_i)$ and $e(x)$
as elements of the function-space $U$ at the point $x,$ shows that
$d\<(e(x),\sum c_i\,e(x_i))\geq \sum_i\,c_i\,d(x,x_i).$

To get (ii), let $X$ again be a circle of circumference $4$ with
arc-length metric, and
let $x_0, x_1, x_2, x_3\in X$ be four points equally spaced around it.
Note that for any $u\in X,$ we have
$d(x_0,u)+d(x_2,u)=2=d(x_1,u)+d(x_3,u).$
Hence if we choose the $b_i$ and $c_i$ so that the right-hand-side
of~(\ref{x.gnot_<e}) is $d((e(x_0){+}e(x_2))/2,\linebreak[0]\,
(e(x_1){+}e(x_3))/2),$
we see that this value is $0.$
On the other hand, if we map $X$ into $\E^1$ by
$f(x)=1-\nolinebreak\max(d(x_0,x),1),$
then of the $f(x_i),$ only $f(x_0)$ is nonzero, so
the left-hand side is not $0,$ so~(\ref{x.gnot_<e}) fails.
\end{proof}

There are, in fact, a different normed vector space $U$ and mapping
$e:X\rightarrow U$ for which the universal property of~(\ref{x.gnot_<e})
does hold \cite[Theorem~2.2.4]{NW}; we examine this construction
in an appendix, \S\ref{S.AE}.

\section{Some explicit mapping radii.}\label{S.discr}
A classical result of H.\,E.\,W.\,Jung is, in effect, an
evaluation of the mapping radius in $\E^n$
of a very simple metric space.

\begin{theorem}[after Jung~\cite{J}]\label{T.Jung}
Let $D_\infty$ denote an infinite metric space in which the
distances between distinct points are all~$1.$
\textup{(}The cardinality does not matter as long as it is
infinite.\textup{)}
Then the values of $\mr(D_\infty,\,\E^n)$ for $n=0,1,2,\dots$ are,
respectively,
\begin{xlist}\item\label{x.Dinfty}
$0\ <\ 1/2\ <\ 1/\sqrt 3\ <\ \sqrt{3/8}\ <\ %
\dots\ <\ \sqrt{n/(2(n+1))}\ <\ \dots\,.$
\end{xlist}
Hence, $\mr(D_\infty,\<\fb{Euc})=1/\sqrt{2}\,.$

Likewise, for any positive integer $m,$ if we
let $D_m$ be an $\!m\!$-element metric space with all pairwise
distances~$1,$ then for every $n\geq 0,$
\begin{xlist}\item\label{x.Dm}
$\mr(D_m,\,\E^n)\ =\ \sqrt{r/(2(r+1))}\,,$\quad where $r=\min(m{-}1,n).$
\end{xlist}
Hence, $\mr(D_m,\<\fb{Euc})=\sqrt{(m-1)/(2m)}\,.$
\end{theorem}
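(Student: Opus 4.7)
The plan is to prove the finite case $\mr(D_m,\E^n)=\sqrt{r/(2(r+1))}$ with $r=\min(m{-}1,n)$; the statements about $D_\infty$ then fall out by taking $m=n{+}1$ in this formula and passing to $\sup_n\sqrt{n/(2(n+1))}=1/\sqrt 2$, while $\mr(D_m,\fb{Euc})=\sqrt{(m{-}1)/(2m)}$ follows by taking $n\geq m{-}1$ and invoking Corollary~\ref{C._<_<}.

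For the lower bound, I would exhibit a nonexpansive $f:D_m\to\E^n$ whose image has radius $\sqrt{r/(2(r+1))}$. Pick $r+1$ points of $D_m$ and send them to the vertices $v_0,\dots,v_r$ of a regular $r$-simplex of edge length $1$ lying in an $r$-flat of $\E^n$; send any remaining points of $D_m$ to the centroid $c$ of this simplex. A direct computation (or Lemma~\ref{L.sym} applied to the symmetric group acting on the vertices) gives $\|v_i-c\|=\sqrt{r/(2(r+1))}\leq 1/\sqrt 2<1$, so $f$ is nonexpansive, and $\rad_{\E^n}(f(D_m))=\sqrt{r/(2(r+1))}$.

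For the upper bound, I would establish Jung's classical inequality in the form: any subset $A$ of $\E^n$ of diameter $\leq 1$ spanning an affine subspace of dimension $\leq r$ satisfies $\rad_{\E^n}(A)\leq\sqrt{r/(2(r+1))}$. This applies to $A=f(D_m)$ for any nonexpansive $f:D_m\to\E^n$, since $A$ has at most $m$ points (hence spans a flat of dimension $\leq\min(m{-}1,n)=r$) and $\diam(A)\leq 1$. To prove the inequality, let $B$ be the smallest closed ball containing $A$, with center $c$ and radius $R$; by minimality, $c$ lies in the convex hull of the subset $A_0\subseteq A$ of points at distance exactly $R$ from $c$. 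By Carath\'eodory within the $r$-dimensional affine span, write $c=\sum_{i=0}^s\lambda_iv_i$ with $v_i\in A_0$, $s\leq r$, and $\lambda_i\geq 0$ summing to $1$. Expanding both sides using $c=\sum\lambda_iv_i$ yields
\[
2R^2 \;=\; 2\sum_i\lambda_i\|v_i-c\|^2 \;=\; \sum_{i,j}\lambda_i\lambda_j\|v_i-v_j\|^2
\;\leq\; 1-\sum_i\lambda_i^2 \;\leq\; 1-\tfrac1{s+1} \;=\; \tfrac{s}{s+1},
\]
using $\|v_i-v_j\|\leq 1$ for $i\neq j$ and the Cauchy--Schwarz bound $\sum_i\lambda_i^2\geq 1/(s+1)$. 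Hence $R^2\leq s/(2(s+1))\leq r/(2(r+1))$.

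The main obstacle is the convex-hull claim for $c$: the standard justification is that otherwise a separating hyperplane in the affine span would allow a small translation of $c$ strictly decreasing $\max_{v\in A}\|v-c\|$, contradicting minimality of $R$. This perturbation-plus-compactness argument, together with the Carath\'eodory step, is the technical crux; everything else reduces to the one-line Gram-type identity above and the simplex circumradius calculation.
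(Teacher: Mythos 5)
Your proposal is correct and follows the same structure as the paper's argument: Jung's inequality gives the upper bound, the vertices of a regular $r$-simplex of edge $1$ give the lower bound, and the case $m\leq n$ is handled by restricting to the affine span of the image. The only difference is that the paper simply cites Jung's theorem from~\cite{J}, whereas you supply a self-contained proof of it (the convex-hull-of-farthest-points claim plus the Gram-type identity), which is a standard and valid derivation.
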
\noindent{\em Summary of proof.}
The main result of~\cite{J} is that every subset of $\E^n$ of diameter
$\leq 1$ has radius $\leq\sqrt{n/(2(n+1))}\,.$
This gives $\mr(D_\infty,\<\E^n)\leq\sqrt{n/(2(n+1))}\,.$
On the other hand, the $n+1$ vertices of the $\!n\!$-simplex of edge
$1$ in $\E^n$ form a subset of radius exactly $\sqrt{n/(2(n+1))}\,,$
and clearly
$D_\infty$ can be mapped onto that set, establishing equality.
Taking the limit of this increasing sequence as $n\rightarrow\infty,$
one gets $\mr(D_\infty,\<\fb{Euc})=1/\sqrt{2}\,.$

Clearly, the hypothesis $m>n$ works as well as $m=\infty$ in
concluding as above that $\mr(D_m,\<\E^n)=\sqrt{n/(2(n+1))}\,.$
For $m\leq n,$ on the other hand, any image of $D_m$
in $\E^n$ lies in an affine subspace that can
be identified with $\E^{m-1},$ so in that case we get
$\mr(D_m,\<\E^n)=\mr(D_m,\<\E^{m-1})\linebreak[0]=\sqrt{(m-1)/(2m)}\,.$
Combining these results, we get~(\ref{x.Dm}) and the final conclusion.
\qed\vspace{6pt}

The inequalities~(\ref{x.Dinfty}) show that
each step of~(\ref{x.E1E2...}) can be strict.
What about the steps of~(\ref{x.d/2...d})?
If we identify terms connected by equal-signs,
then~(\ref{x.d/2...d}) lists six possibly distinct values,
connected by five $\!\leq\!$-signs.
Three of these $\!\leq\!$-signs are shown
strict by the $\!3\!$-point metric space $D_3$ of
the above theorem, for which, I claim, the $\!6\!$-tuple of values
is $(1/2,\,1/\sqrt{3}\,,\,2/3,\,2/3,\,1,\,1).$
The first of these values, and the last two, are clear, and the
second comes from the above theorem (line after~(\ref{x.Dm})).
To evaluate the remaining two values, $\mr(D_3,\,\fb{NmV})$
and $\mr(D_3,\fb{Conv}),$
consider the embedding $e:D_3\rightarrow U$ as in the last paragraph
of the proof of Theorem~\ref{T.cnv+int}.
The space $U$ used there can in this case be described as $\R^3$ under
the $\sup$ norm; let $C$ be the convex hull in $U$ of
\begin{xlist}\item\label{x.011}
$e(D_3)\ =\ \{(0,1,1),\,(1,0,1),\,(1,1,0)\}.$
\end{xlist}
Then Lemma~\ref{L.facts}(ii) (in particular, the final sentence)
tells us that $\rad_C(e(D_3))$ is the common distance of the three
points of $e(D_3)$ from the unique point of $C$ invariant
under cyclic permutation of the coordinates, namely $(2/3,\<2/3,\<2/3).$
This common distance is $2/3$ (since each
member of $e(D_3)$ has a zero coordinate), so $\rad_C(e(D_3))=2/3,$
and by Theorem~\ref{T.cnv+int}, this is $\mr(D_3,\fb{Conv}).$
Since $\mr(D_3,\,\fb{NmV})\leq\mr(D_3,\,\fb{Conv}),$
to show that $\mr(D_3,\,\fb{NmV})$ is also $2/3$
it will suffice to obtain a nonexpansive map $f$ of
$D_3$ into a vector space $V$ such that $\rad_V(f(D_3))=2/3.$
This may be done by using the same mapping as above, but
translated by $(-2/3,-2/3,-2/3),$ so that the affine span of its
image becomes a vector subspace of $\R^3,$ which, with
its induced norm, we take as our $V.$
The preceding argument now gives $\rad_V(f(D_3))=2/3.$

For a space showing strict inequality at the final step
of~(\ref{x.d/2...d}), $\rad_X(X)\leq\diam(X),$
one can use any nontrivial instance of Corollary~\ref{C.r=d/2};
for instance, the unit interval $[0,1],$ for which that corollary
shows that the $\!6\!$-tuple in question
is $(1/2,\,1/2,\,1/2,\,1/2,\,1/2,\,1).$

This leaves the step
\begin{xlist}\item\label{x.NmVConv}
$\mr(X,\fb{NmV})\ \leq\ \mr(X,\fb{Conv}).$
\end{xlist}
I thought at first that equality had to hold here:
that for a $C$ a convex subset of a normed vector space $V$ and
any $A\subseteq C$ (in particular, the image of any
map of a metric space into $C),$ one had $\rad_V(A)=\rad_C(A).$
However, this is not so: consider the untranslated case~(\ref{x.011})
of the
above $D_3$ example, and note that the point $(1/2,1/2,1/2)\in U$ has
distance $1/2$ from each point of~(\ref{x.011});
so $\rad_U(e(D_3))\leq 1/2 < 2/3=\rad_C(e(D_3)).$

Nonetheless we have seen that for $X=D_3,$ equality holds
in~(\ref{x.NmVConv}).
Here, however, is an example (which it took attempts spread over many
months to find) for which that inequality is strict.

Consider the graph with 7 vertices, $x,y_0,y_1,y_2,z_0,z_1,z_2,$
and $9$ edges: a length-$\!1\!$ edge from $x$ to each of the $y_i,$ and
a length-$\!2\!$ edge from $y_i$ to $z_j$ whenever $i\neq j;$
and let $X$ be the vertex-set of this graph, with arc-length metric.
Thus, for all $i\neq j$ we have
\begin{xlist}\item\label{x.xyz}
$\!d(x,\,y_i)=1,\quad d(x,\,z_i)=3,\quad d(y_i,y_j)=2,\\[6pt]
d(y_i,z_j)=2,\quad d(y_i,z_i)=4,\quad d(z_i,z_j)=4.$
\end{xlist}

Let us first find $\mr(X,\fb{Conv}),$ using Theorem~\ref{T.cnv+int}.
We must maximize the infimum~(\ref{x.mrConv}) over the convex
linear combinations of $\mu_x,\dots,\mu_{z_2}.$
By Lemma~\ref{L.sym}, it suffices to
maximize that expression over points invariant under permutations
of the subscripts; i.e., over convex linear combinations of
\begin{xlist}\item\label{x.muxyz}
$\mu_x,\quad\mu_y=(\mu_{y_1}+\mu_{y_2}+\mu_{y_3})/3,
\quad\mu_z=(\mu_{z_1}+\mu_{z_2}+\mu_{z_3})/3.$
\end{xlist}
We find that
\begin{xlist}\item\label{x.xyzmuxyz}
$\!\mu_x(x)=0,\quad\mu_x(y_i)=\ 1,\quad\ \,\mu_x(z_i)=3,\\[6pt]
\mu_y(x)=1,\quad\mu_y(y_i)=4/3,\quad\mu_y(z_i)=8/3,\\[6pt]
\mu_z(x)=3,\quad\mu_z(y_i)=8/3,\quad\mu_z(z_i)=8/3.$
\end{xlist}
Any convex linear combination of these three
functions has value $\geq 8/3$ at each $z_i;$ so
every value of the supremum in~(\ref{x.mrConv}) is at least $8/3.$
Moreover, taking $\mu=\mu_y$ (or more generally,
$\mu=(1-t)\mu_y+t\mu_z$ for any $t\in[0,5/6]),$ we see that this
value $8/3$ is attained; so
\begin{xlist}\item\label{x.8/3}
$\mr(X,\fb{Conv})\ =\ 8/3.$
\end{xlist}

The idea of our verification that $\mr(X,\fb{NmV})$ is strictly
smaller than~(\ref{x.8/3}) will be to use
the {\em non-convex} affine combination
$(3\mu_y-\mu_x)/2$ of the functions~(\ref{x.xyzmuxyz}), so as to
reduce somewhat the highest values of $\mu_y,$ those at
the $z_i,$ without bringing the values at other points up by too much.
But since we don't have the analog of Theorem~\ref{T.cnv+int}
for non-convex combinations (and indeed, that analog is not true
in general -- if it were, then $2\mu_y-\mu_x$ would lead to a still
better result, but it does not), we must
calculate by hand rather than calling on such a theorem.
So suppose $f$ is a nonexpansive map of $X$ into a normed
vector space $V,$ and~let
\begin{xlist}\item\label{x.p=3/2-1/2}
$p\ =\ (f(y_0)+f(y_1)+f(y_2)-f(x))/2.$
\end{xlist}
We need to bound the distances between $p$ and the points of $f(X).$
In view of the symmetry of~(\ref{x.p=3/2-1/2}), it will suffice
to bound the distances to $f(x),\ f(y_0)$ and $f(z_0).$
We calculate
\begin{xlist}\item\label{x.dyf}
$\!d(p,f(x))\ =\ ||\,(f(y_0)+f(y_1)+f(y_2)-f(x)-2f(x))/2\,||\\[4pt]
\hspace*{.5in}
\leq\ (||f(y_0)-f(x)||+||f(y_1)-f(x)||+||f(y_2)-f(x)||)/2
\ \leq\ (1+1+1)/2\ =\ 3/2.\\[6pt]
d(p,f(y_0))\ =
\ ||\,(f(y_0)+f(y_1)+f(y_2)-f(x)-2f(y_0))/2\,||\\[4pt]
\hspace*{.5in}
\leq\ (||f(y_1)-f(y_0)||+||f(y_2)-f(x)||)/2\ \leq
\ (2+1)/2\ =\ 3/2.\\[6pt]
d(p,f(z_0))\ =
\ ||\,(f(y_0)+f(y_1)+f(y_2)-f(x)-2f(z_0))/2\,||\\[4pt]
\hspace*{.5in}
\leq\ (||f(y_0)-f(x)||+||f(y_1)-f(z_0)||+||f(y_2)-f(z_0)||)/2\leq
(1+2+2)/2\ =\ 5/2.$
\end{xlist}
Taking the maximum of these values, we get
\begin{xlist}\item\label{x.5/2}
$\mr(X,\fb{NmV})\ \leq\ 5/2\ <\ 8/3\ =\ \mr(X,\fb{Conv}),$
\end{xlist}
a strict inequality, as claimed.\vspace{6pt}

The above observations suggest the question:
Which normed vector spaces $V$ have the property that
the radius of every subset $X$ of $V$ is the same whether evaluated
in $V,$ or in an arbitrary convex subset of $V$ containing $X$?
This is examined in an appendix,~\S\ref{S.park}.\vspace{6pt}

The example of~(\ref{x.011}) showed that the radius
of a subset of a normed vector space could change when
one passed to a larger normed vector space.
Let us note a curious consequence.

\begin{lemma}\label{L.U0notfix}
Let $U$ be $\R^3$ under the $\sup$ norm, and
$U_0\subseteq U$ be $\{(a,b,c)\in U\mid a+b+c=0\}.$
Then there is no isometric reflection $U\rightarrow U$
having $U_0$ as its fixed subspace.
In fact, no finite group of affine isometries of any normed
vector space $W$ containing $U$ has $U_0$ as its fixed subspace.
\end{lemma}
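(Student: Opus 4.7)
The plan is to argue by contradiction from the retraction principle embedded in Lemma~\ref{L.facts}(ii). Suppose some finite group $G$ of affine isometries of a normed vector space $W\supseteq U$ has $U_0$ as its fixed subspace. The averaging map $\pi(w)=|G|^{-1}\sum_{g\in G}gw$ is then affine, nonexpansive, sends every $w\in W$ into the fixed subspace $U_0$, and fixes $U_0$ pointwise; so $\pi$ is a nonexpansive retraction of $W$ onto $U_0$. Since $U_0\subseteq U\subseteq W$, the restriction $\pi|_U$ takes values in $U_0\subseteq U$ and is a nonexpansive retraction of $U$ onto $U_0$. By Lemma~\ref{L.facts}(ii), this forces $\rad_U(A)=\rad_{U_0}(A)$ for every nonempty $A\subseteq U_0$.

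I would then contradict this equality using a translate of the three-point set $e(D_3)=\{(0,1,1),(1,0,1),(1,1,0)\}$ that appeared earlier in the section. Set $A=e(D_3)-(2/3,2/3,2/3)=\{(-2/3,1/3,1/3),\,(1/3,-2/3,1/3),\,(1/3,1/3,-2/3)\}$, whose three points each have coordinates summing to $0$, so $A\subseteq U_0$. Cyclic permutation of coordinates is a $\sup$-norm isometry of $U_0$ that preserves $A$ and has only the origin as a fixed point in $U_0$, so Lemma~\ref{L.sym} yields $\rad_{U_0}(A)=\max_{a\in A}\|a\|_\infty=2/3$. On the other hand, the point $(-1/6,-1/6,-1/6)\in U$ is at $\sup$-distance exactly $1/2$ from each element of $A$, so $\rad_U(A)\leq 1/2<2/3=\rad_{U_0}(A)$, contradicting the previous paragraph. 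The reflection statement is the special case $|G|=2$.

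There is no real obstacle here beyond noticing the right reduction: the gap $\rad_U(e(D_3))\leq 1/2$ versus $\rad_C(e(D_3))=2/3$ exhibited in the preceding discussion is, after translating $e(D_3)$ into $U_0$, a gap between $\rad_U$ and $\rad_{U_0}$ on a subset of $U_0$, which is precisely the equality ruled out by the retraction assertion of Lemma~\ref{L.facts}(ii). Once this reduction is spotted, the only verification required is the one-line $\sup$-norm computation that exhibits $(-1/6,-1/6,-1/6)$ as a center of radius $1/2$ for $A$ in $U$, together with the one-line application of Lemma~\ref{L.sym}.
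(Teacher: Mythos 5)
Your proof is correct and rests on the same two ingredients as the paper's: the averaging map $w\mapsto|G|^{-1}\sum_{g\in G}gw$ and the translated set $e(D_3)-(2/3,2/3,2/3)\subseteq U_0$ with $\rad_U\leq 1/2<2/3=\rad_{U_0}$. The only (harmless) difference in packaging is that you restrict the averaging retraction to $U$ and invoke the retract clause of Lemma~\ref{L.facts}(ii) to get $\rad_U(A)=\rad_{U_0}(A)$ directly, whereas the paper applies Lemma~\ref{L.sym} in $W$ to get $\rad_W(A)=\rad_{U_0}(A)$ and combines this with $\rad_W(A)\leq\rad_U(A)$.
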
\begin{proof}
Let $W$ be any normed vector space containing $U,$ and
let $f:D_3\rightarrow U_0$ be given by $f(x)=e(x)-(2/3,2/3,2/3),$
for $e$ as in the paragraph containing~(\ref{x.011}).
The first sentence of Lemma~\ref{L.facts}(ii) gives
$\rad_W(f(D_3))\leq\rad_U(f(D_3)),$ which we saw
is $<\rad_{U_0}(f(D_3)).$
On the other hand, if $W$ had a finite group $G$ of affine isometries
with fixed subspace $U_0,$ then Lemma~\ref{L.sym} would give
$\rad_W(f(D_3))=\rad_{U_0}(f(D_3)).$
\end{proof}

Returning to~(\ref{x.E1E2...}) and~(\ref{x.d/2...d}), let us for
simplicity reduce the number of independent values by ``normalizing''
to the case $\diam(X)=2,$ and ask for more detailed information
than those inequalities.

\begin{question}\label{Q.whatsset}
Let $X$ run over all metric spaces of diameter~$2.$
What can one say about the geometry of the resulting sets of sequences
\begin{xlist}\item\label{x.infseq}
$\{(\mr(X,\E^1),\ \mr(X,\E^2),\ \dots,\ \mr(X,\E^n),\ \dots)\}
\ \subseteq\ \R^{\mathbb N},$
\end{xlist}
\begin{xlist}\item\label{x.4-term}
$\{(\mr(X,\,\fb{Euc}),\ \mr(X,\,\fb{NmV}),\ %
\mr(X,\,\fb{Conv}),\ \mr(X,\,\fb{\M}))\}
\ \subseteq\ \R^4$?
\end{xlist}

Can one describe them exactly?
Are they convex; or do they become convex
on replacing the entries by their logarithms,
or under some other natural change of coordinates?

If two successive terms of a member of~\textup{(\ref{x.infseq})} are
equal, is the sequence constant from that point on?
\end{question}

Another family of questions, suggested by Theorem~\ref{T.Jung}, is

\begin{question}\label{Q.whatvals}
For $n\geq 2,$ what can one say about the set of nonnegative real
numbers that can be written $\mr(X,\E^n)$ for finite metric
spaces $X$ in which all distances are {\em integers}?

Are all such real numbers ``constructible'', i.e., obtainable from
rational numbers by a finite sequence of square roots and
ring operations?

Is this set well-ordered for each $n$?
\textup{(}It has a smallest element $0,$ and a
next-to-smallest element $1/2.)$

Does this set change if ``finite metric spaces $X$\dots''
is weakened to ``bounded metric spaces $X$\dots''?

For $m<n,$ can one assert any inclusion between the sets of mapping
radii into $\E^m,$ and into $\E^n$?
Are there values that occur as $\mr(X,\fb{Euc})$ for some $X,$
but not as $\mr(X',\E^n)$ for any $X'$ and $n$?
\textup{(}E.g.,
can $1/\sqrt{2}$ be written in the latter form?\textup{)}
\end{question}

We end this section with an observation made in~\cite{H+D}
for the spaces $\E^n,$ which in fact holds for closed convex
subsets of arbitrary finite-dimensional normed spaces.

\begin{lemma}[cf.\ {\cite[Proposition~29, p.14, and second paragraph of
p.46]{H+D}}]\label{L.helly}
If $C$ is a closed convex subset of a normed vector space $V$ of
finite dimension $n,$ and $A$ a subset of $C$ with $>n$ elements, then
$\rad_C(A)=\sup_{A_0}\rad_C(A_0),$
where $A_0$ runs over the $\!n{+}1\!$-element subsets of $A.$
\end{lemma}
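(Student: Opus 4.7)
The plan is to apply Helly's theorem on convex sets in the $n$-dimensional space $V$ to a family of closed norm-balls intersected with $C$. Write $R=\sup_{A_0}\rad_C(A_0)$ for the right-hand side. The inequality $\rad_C(A)\geq R$ is immediate: for any $A_0\subseteq A$ the infimum in $\rad_C(A_0)=\inf_{y\in C}\sup_{a\in A_0}d(y,a)$ is over the same set $C$ of candidate centers as for $\rad_C(A)$, while the inner sup is over a smaller set, so $\rad_C(A_0)\leq\rad_C(A)$.

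For the reverse inequality, fix any $r>R$. For each $a\in A$ set
\[
B_a\ =\ C\cap\{y\in V\mid\|y-a\|\leq r\},
\]
a closed, convex subset of $V$. For every $(n{+}1)$-element subset $A_0\subseteq A$ we have $\rad_C(A_0)<r$, so by definition of the radius there exists $y_0\in C$ with $\|y_0-a\|<r$ for all $a\in A_0$; hence $\bigcap_{a\in A_0}B_a\neq\emptyset$. Helly's theorem, applied in $V\cong\R^n$ to any finite subfamily $\{B_a\mid a\in A'\}$ (where $A'\subseteq A$ is finite), now tells us that every such finite subfamily has nonempty intersection.

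To promote this to the full intersection $\bigcap_{a\in A}B_a$, which may be over an infinite index set, we use compactness. Pick any single $a_*\in A$. Because $V$ is finite-dimensional, the closed norm-ball $\{y\in V\mid\|y-a_*\|\leq r\}$ is compact; and because $C$ is closed in $V$, the set $B_{a_*}$ is a closed subset of that compact ball, hence compact. The collection $\{B_a\cap B_{a_*}\mid a\in A\}$ consists of closed subsets of the compact space $B_{a_*}$ and has the finite intersection property by the preceding paragraph, so its total intersection is nonempty. Any point $y$ in it lies in $C$ and satisfies $\|y-a\|\leq r$ for every $a\in A$, whence $\rad_C(A)\leq r$; letting $r\searrow R$ gives $\rad_C(A)\leq R$.

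The argument is a fairly standard packaging of Helly, and the main subtleties are bookkeeping rather than conceptual. Two points deserve care: first, since the infimum defining $\rad_C(A_0)$ need not be attained, we work with $r$ \emph{strictly} greater than $R$ in order to produce actual points inside the balls $B_a$; second, the reduction from "every finite subfamily intersects" to "the whole family intersects" requires compactness, and it is precisely here that both hypotheses on $V$ and $C$ — the finite-dimensionality of $V$ and the closedness of $C$ — are used, to ensure $B_{a_*}$ is compact.
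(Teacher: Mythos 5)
Your proof is correct and follows essentially the same route as the paper's: both reduce the claim to showing that the compact convex sets $C\cap\{y\mid \|y-a\|\le r\}$ $(a\in A)$ have a common point via Helly's theorem. The only difference is expository — you spell out the passage from finite subfamilies to the whole (possibly infinite) family using compactness and the finite intersection property, whereas the paper directly invokes the version of Helly's theorem stated for arbitrary families of compact convex sets.
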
\begin{proof}
``$\!\geq\!$'' is clear; so it suffices to show that if for some
real number $r,$ each $A_0$ is contained in a closed ball of radius
$r$ centered at a point of $C,$ then so is $A.$
Now for each $a\in A,$ the set of $v\in C$ such that $a$ lies
in the closed ball in $C$ of radius $r$ about $v$ is the closed ball
in $C$ of radius $r$ about $a,$ hence a compact convex subset of $V.$
To say that a set $A_0$ is contained in some closed ball of radius $r$
centered at a point of $C$ is to say that the intersection of these
sets, as $a$ runs over $A_0,$ is nonempty.
By Helly's Theorem (\cite{H}, \cite{DGK}),
if a family of compact convex subsets
of $\R^n$ has the property that every system of $n+1$ members of this
family has nonempty intersection, then so does the whole family;
which in this case means that all of $A$ is contained in a ball
of the indicated sort.
\end{proof}

The above lemma does not imply the corresponding statement
for mapping radii.
For example, let $X=\{x,y_0,y_1,y_2\},$ where $x$ has distance $1/2$
from each of the $y_i,$ and these have distance $1$ from each other.
The maximum of the mapping radii in $\E^2$ of $\!3\!$-element subsets
of $X$ is $\mr(\{y_0,y_1,y_2\},\E^2)=\mr(D_3,\E^2)=1/\sqrt{3}\<.$
But $\mr(X,\E^2)\leq\rad_X(X)=1/2.$

On the other hand, for this example, $\mr(X,\E^2)$
can be described as the infimum over $p\in X$ of the supremum
of $\mr(X_0,\E^2)$ over all $\!3\!$-element subsets
$X_0$ of $X$ containing $p.$
So we ask

\begin{question}\label{Q.helly}
Does there exist, for every positive integer $n,$ a positive
integer $N$ and a formula which for every metric space $X$
of $\geq N$ elements, and every normed vector space $V$ of dimension
$n,$ expresses $\mr(X,V),$ using the operations of
suprema and infima, in terms of the numbers $\mr(X_0,V),$
for $\!N\!$-element subsets $X_0\subseteq X$?
\end{question}

\section{Realizability of mapping radii.}\label{S.realize}

For a subset $A$ of a metric space $Y,$ let us say that
$\rad_Y(A)$ is {\em realized} if
the infimum in the definition~\textup{(\ref{x.rad})} of that
expression is attained, that is, if there exists $y\in Y$ such that
$A$ is contained in the closed ball of radius $\rad_Y(A)$ about $y.$

Likewise, for metric spaces $X$ and $Y,$ let us say that $\mr(X,Y)$
is realized if the supremum in the definition of that expression is
attained; that is, if there exists an $f:X\rightarrow Y$ such
that $\rad_Y(f(X))=\mr(X,Y).$
(This does not presume that $\rad_Y(f(X))$ is realized.)

\begin{lemma}\label{L.compact}
Let $X$ and $Y$ be nonempty metric spaces.\\[6pt]
\textup{(i)}\ \ If $Y$ is compact,
then for any subset $A\subseteq Y,$ $\rad_Y(A)$ is realized.\\[6pt]
\textup{(ii)}\ \ If $X$ and $Y$ are both compact,
then $\mr(X,Y)$ is realized.\vspace{6pt}

However\\[6pt]
\textup{(iii)}\ \ For $X$ compact and $Y$ bounded and complete,
or for $X$ bounded and complete
and $Y$ compact, $\mr(X,Y)$ may fail to be realized.
\end{lemma}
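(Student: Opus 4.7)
\emph{Plan.} For part~(i), note that the function $g\colon Y\to\R$ defined by $g(y)=\sup_{a\in A}d(a,y)$ is the supremum of the $1$-Lipschitz functions $y\mapsto d(a,y)$ and is finite (since compact $Y$ is bounded), hence itself $1$-Lipschitz and therefore continuous. A continuous real-valued function on a compact metric space attains its infimum, so the infimum defining $\rad_Y(A)$ is realized at some $y_0\in Y$.

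For part~(ii), I would equip $\M(X,Y)$ with the uniform metric $\sigma(f_0,f_1)=\sup_{x\in X}d(f_0(x),f_1(x))$, finite since $Y$ is bounded. The members of $\M(X,Y)$ are all $1$-Lipschitz (hence equicontinuous) and pointwise bounded into the compact $Y$, so Arzel\`a--Ascoli makes $\M(X,Y)$ compact in this topology. The functional $\Phi(f)=\rad_Y(f(X))$ is $1$-Lipschitz in $\sigma$: if $\sigma(f_0,f_1)\leq\epsilon$, then $|\sup_{x}d(y,f_0(x))-\sup_{x}d(y,f_1(x))|\leq\epsilon$ for every $y\in Y$, and taking the infimum over $y$ preserves that bound. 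A continuous function on a compact space attains its supremum, so $\mr(X,Y)=\sup_{f}\Phi(f)$ is realized.

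For part~(iii), case~(a), I exhibit an explicit counterexample. Let $X=\{p,q\}$ with $d(p,q)=1$ (which is compact), and define $Y=\{c\}\cup\{d_n\colon n\geq 1\}$ with $d(c,d_n)=1-1/(n+1)$ and $d(d_n,d_m)=1$ for $n\neq m$; the triangle inequalities are routine. Then $\diam(Y)=1$ and all pairwise distances in $Y$ are at least $1/2$, so $Y$ is discrete, complete, and non-compact. Any $f\in\M(X,Y)$ is determined by $(f(p),f(q))$, a pair of distance $\leq 1$; a direct computation gives $\rad_Y(\{d_n,d_m\})=1-1/(\max(n,m){+}1)$ (the infimum is attained at $c$, since any other candidate center $d_k$ lies at distance $1$ from at least one of $d_n,d_m$), while pairs involving $c$ yield smaller radii. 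Hence $\rad_Y(f(X))<1$ for every $f$, yet the supremum is $1$, approached but never attained as $\max(n,m)\to\infty$; so $\mr(X,Y)=1$ is not realized.

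For case~(b), the construction is dual in spirit: by (i), each $\rad_Y(f(X))$ is already attained in the compact $Y$, so failure of realization of $\mr(X,Y)$ must come from the lack of a limit in $\M(X,Y)$ for an optimizing sequence of maps---exactly the failure of Arzel\`a--Ascoli when $X$ is only bounded and complete. I plan to take a bounded, complete, discrete (hence non-compact) $X$ with carefully chosen pairwise distances, together with a compact $Y$, such that an optimizing family $\{f_n\}\subseteq\M(X,Y)$ has images converging in Hausdorff distance to a subset of $Y$ realizing $\mr(X,Y)$, while no individual $f\in\M(X,Y)$ can attain that image. The main obstacle will be to constrain the metric on $X$ just tightly enough that no single nonexpansive map from $X$ can realize the optimal image configuration.
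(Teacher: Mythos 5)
Your parts (i), (ii) and the first half of (iii) are correct. Part (i) is the paper's argument. For part (ii) you take a genuinely different route: you metrize $\M(X,Y)$ with the uniform metric, get compactness from Arzel\`a--Ascoli (equicontinuity coming from nonexpansiveness, closedness under uniform limits being clear), and then observe that $f\mapsto\rad_Y(f(X))$ is $1$-Lipschitz for that metric --- a clean two-line estimate. The paper instead takes the topology of pointwise convergence on $Y^X$, gets compactness of $\M(X,Y)$ from Tychonoff, and then must work to prove continuity of $f\mapsto\rad_Y(f(X))$ in that weaker topology, covering $X$ by finitely many $\varepsilon/3$-balls; compactness of $X$ enters there, whereas in your version it enters through Arzel\`a--Ascoli. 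Both are correct; yours localizes the use of compactness of $X$ more transparently, while the paper's makes visible exactly where continuity would fail for non-compact $X$ (which it then exploits in (iii)). Your example for the case ``$X$ compact, $Y$ bounded and complete'' is, up to reindexing, the same two-point-domain example as the paper's, and your computation of the radii is right.

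The second half of (iii) is a genuine gap: for ``$X$ bounded and complete, $Y$ compact'' you give only a plan, and the plan stops exactly at the point where the work is (``constrain the metric on $X$ just tightly enough that no single nonexpansive map can realize the optimal image''). An explicit construction is needed, and the paper supplies one you should compare with: take $X=\{x_2,x_3,\dots\}$ with $d(x_{2n},x_{2n+1})=2-1/n$ and all other pairs of distinct points at distance $1$ (bounded, uniformly discrete, hence complete, not compact), and $Y=[0,2]\subseteq\E^1$. Any $f\in\M(X,Y)$ whose image has radius $>1/2$ must send some pair of points to values more than $1$ apart, which forces that pair to be some $\{x_{2n},x_{2n+1}\}$; since every other point of $X$ is at distance $1$ from both, its image is trapped between $f(x_{2n})$ and $f(x_{2n+1})$, so $f(X)$ lies in an interval of length $\leq 2-1/n<2$ and has radius $<1$, while radii arbitrarily close to $1$ are attainable. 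So $\mr(X,Y)=1$ is not realized. Until you produce a construction of this kind (or verify this one), part (iii) of your proof is incomplete.
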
\begin{proof}
(i) follows from the fact that for bounded $A,$
$\sup_{a\in A}\ d(a,y)$ is a
continuous function of $y,$ hence assumes a minimum on $Y.$

To get~(ii), we note that $\M(X,Y)$ is a closed subset of the function
space $Y^X,$ which is compact because $Y$ is,
so $\M(X,Y)$ is compact in the function topology.
We would like to say that the real-valued map on this space given
by $f\mapsto\rad_Y(f(X))$ is continuous, and hence assumes a maximum.
For general $X,$ this continuity does not hold, as will follow from the
second statement of~(iii); but I claim that it holds if $X$ is compact.
For given $f\in\M(X,Y)$ and $\varepsilon>0,$ compactness allows us to
cover $X$ by finitely many open balls of radius $\varepsilon/3,$
say centered at $x_1,\dots,x_n.$
Consider the neighborhood of $f$ in $\M(X,Y)$ given by
\begin{xlist}\item[]
$U\ =\ \{g\in\M(X,Y)\mid\ d(f(x_i),g(x_i))<\varepsilon/3\ \ %
(i=1,\dots,n)\}.$
\end{xlist}
Taking any $x\in X$ and $y\in Y,$ note that there exists $i$ such
that $d(x_i,x)<\varepsilon/3;$ hence for $g\in U,$
\begin{xlist}\item[]
$|d(f(x),y)-d(g(x),y)|\ \leq\ d(f(x),g(x))\ \leq\ %
d(f(x),f(x_i))+d(f(x_i),g(x_i))+d(g(x_i),g(x))\\*[6pt]\hspace*{1.47in}
\leq\ \varepsilon/3+\varepsilon/3+\varepsilon/3\ =\ \varepsilon.$
\end{xlist}

Thus, the two functions associating to every
$y\in Y$ the numbers $\sup_{x\in X} d(f(x),y)$ and
$\sup_{x\in X} d(g(x),y)$ differ everywhere by $\leq\varepsilon,$
whence the infima of these functions, $\rad_Y(f(X))$ and $\rad_Y(g(X))$
differ by $\leq\varepsilon,$ giving continuity of
$f\mapsto\rad_Y(f(X)),$ which, as noted above, yields~(ii).

(iii)\ \ For an example with $X$ but not $Y$ compact,
let $X=D_2,$ i.e., a space consisting of two points at
distance $1$ apart, and let $Y=\{y_2,y_3,\dots,y_n,\linebreak[0]
\dots\}\cup\{z\},$ with $d(y_m,y_n)=1$ $(m\neq n)$
and $d(y_n,z)=1-1/n.$
Note that the radius in $Y$ of a point-pair $\{z,y_n\}$
or $\{y_m,y_n\}$ with $m<n$ is $1-1/n.$
Now $\M(X,Y)$ consists of {\em all} set-maps $X\rightarrow Y,$ and it
follows from the above calculation that
$\mr(X,Y)=\linebreak[3]\sup_n (1-1/n)=1,$
but that this value is not achieved.
(If we had not specified that $Y$ should be complete, we
could have used the simpler example, $X=D_2,\ Y=[0,1).)$

For an example with $Y$ but not $X$ compact,
let $X=\{x_2,x_3,\dots,x_n,\dots\},$ with
$d(x_{2n},x_{2n+1})=2-1/n,$ and all other pairs of distinct
points having distance~$1;$ and let $Y=[0,2]\subseteq\E^1.$
Note that if a map $X\rightarrow Y$ is to have radius $>1/2,$
it must send some pair of points to values differing by $>1,$
and by our metric on $X,$ these two points must have the
forms $x_{2n},\ x_{2n+1}.$
Since all other points have distance $1$ from these two, the images of
all other points must fall within the interval between their images.
Hence the image of our map falls within an interval of
length $\leq 2-1/n$ for some positive $n,$ i.e., of length
$<2,$ and hence of radius $<1.$
But such images can have radii arbitrarily close to $1,$
again giving a mapping radius that is not realized.
\end{proof}

\begin{corollary}\label{C.loc_cmpct}
Suppose $Y$ is a metric space in which every
closed bounded subset is compact.
Then\\[6pt]
\textup{(i)}\ \ For every bounded nonempty
subset $A\subseteq Y,$ $\rad_Y(A)$ is realized.\\[6pt]
\textup{(ii)}\ \ If the isometry group of $Y$ is transitive,
or more generally, if $Y$ has a bounded subset which meets
every orbit of that group, then
for every compact nonempty metric space $X,$ $\mr(X,Y)$ is realized.
\end{corollary}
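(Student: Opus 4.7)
For part~(i), fix some $a_0\in A$ and set $M=\sup_{a\in A}d(a,a_0)$, which is finite since $A$ is bounded. The function $\varphi(y)=\sup_{a\in A}d(a,y)$ is $1$-Lipschitz in $y$, hence continuous. Taking $y=a_0$ shows $\rad_Y(A)\leq M$. Moreover, for any $y\in Y$ one has $\varphi(y)\geq d(a_0,y)$, so the level set $\{y:\varphi(y)\leq M+1\}$ is contained in the closed ball $\bar B(a_0,M+1)$, which by hypothesis is compact. Thus the infimum of $\varphi$ may as well be taken over this compact set, where it is achieved by continuity.

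For part~(ii), let $B\subseteq Y$ be a bounded subset meeting every orbit of the isometry group $\mathrm{Iso}(Y)$, pick $x_0\in X$, and choose a sequence $f_n\in\M(X,Y)$ with $\rad_Y(f_n(X))\to\mr(X,Y)$. For each $n$, select $\phi_n\in\mathrm{Iso}(Y)$ with $\phi_n(f_n(x_0))\in B$, and set $g_n=\phi_n\circ f_n$. Since $\phi_n$ is a bijective isometry of $Y$, $\rad_Y(g_n(X))=\rad_Y(f_n(X))$, and of course $g_n\in\M(X,Y)$. Pick $y_0\in Y$ and $R>0$ with $B\subseteq\bar B(y_0,R)$. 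Because $X$ is compact we have $D=\diam(X)<\infty$, and nonexpansiveness of $g_n$ together with $g_n(x_0)\in B$ forces $g_n(X)\subseteq K:=\bar B(y_0,R+D)$, which is compact by hypothesis on~$Y$.

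Now the family $\{g_n\}$ consists of equicontinuous ($1$-Lipschitz) maps from the compact space $X$ into the compact space $K\subseteq Y$. By Arzel\`a--Ascoli, a subsequence $g_{n_k}$ converges uniformly to a map $g:X\to K$; a uniform limit of $1$-Lipschitz maps is $1$-Lipschitz, so $g\in\M(X,Y)$. The continuity argument from the proof of Lemma~\ref{L.compact}(ii) applies verbatim here (it used compactness of $X$ and nothing about $Y$): uniform convergence $g_{n_k}\to g$ on $X$ implies $\sup_{x\in X}d(g_{n_k}(x),y)\to\sup_{x\in X}d(g(x),y)$ uniformly in $y\in Y$, whence $\rad_Y(g_{n_k}(X))\to\rad_Y(g(X))$. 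Therefore $\rad_Y(g(X))=\mr(X,Y)$, realizing the mapping radius.

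The single point requiring care is the harmless-looking step that applies the isometries $\phi_n$: without the hypothesis that $B$ meets every orbit, the images $f_n(X)$ could wander off to infinity in different components of the orbit space, and no subsequence would be precompact even though each individual $f_n(X)$ is bounded. The orbit-transitivity (or the weaker hypothesis) is exactly what lets us pull everything back into a single compact ball before extracting a subsequence, after which the Arzel\`a--Ascoli step and the continuity of $f\mapsto\rad_Y(f(X))$ are routine.
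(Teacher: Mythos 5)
Your proof is correct. Part (i) is essentially the paper's argument: both restrict the infimum of the continuous function $y\mapsto\sup_{a\in A}d(a,y)$ to a compact ball that must contain all near-minimizers. Part (ii) shares the paper's key idea -- use the orbit hypothesis to compose each $f_n$ with an isometry so that all images land in one fixed compact ball $K$ -- but finishes differently. The paper takes $Y'$ to be a slightly larger compact ball, argues (as in part (i)) that $\rad_{Y'}(f(X))=\rad_Y(f(X))$ for all the adjusted maps, and then invokes Lemma~\ref{L.compact}(ii) with $Y'$ in place of $Y$; note that the equality of radii in $Y'$ and in $Y$ genuinely needs checking, since the paper shows elsewhere that radii can change when one passes to a subspace. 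You instead extract a uniformly convergent subsequence via Arzel\`a--Ascoli and use the continuity of $f\mapsto\rad_Y(f(X))$ under uniform convergence, measuring all radii in $Y$ throughout; this sidesteps the $\rad_{Y'}$ versus $\rad_Y$ comparison entirely, at the cost of re-running a compactness extraction that the paper gets for free by citing its lemma (which itself rests on Tychonoff and pointwise convergence rather than Arzel\`a--Ascoli and uniform convergence). Both routes are sound; yours is marginally more self-contained, the paper's marginally more economical.
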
\begin{proof}
(i)\ \ Let $\rad_Y(A)=r,$ choose any $a_0\in A,$ and let $Y'$ be
the closed ball of any radius $r'>\diam(A)\geq r$ about $a_0$ in $Y.$
By assumption $Y'$ is compact.
We see that $A\subseteq Y',$ and that every point $y\in Y$
with $\sup_{a\in A} d(a,y)\leq r'$ lies in $Y'.$
Since $\inf_{y\in Y}\sup_{a\in A} d(a,y)=r,$ the space $Y$
contains points $y$ for which $\sup_{a\in A} d(a,y)$ comes
arbitrarily close to $r;$
hence it will contain points for which that value is arbitrarily
close to $r$ {\em and} is $\leq r'.$
Points with this latter property lie in $Y',$
whence $\rad_{Y'}(A)$ is also equal to $r,$ and
applying part~(i) of the preceding lemma with $Y'$ for $Y$
gives the desired conclusion.

(ii)\ \ Suppose every orbit of the isometry group of $Y$ meets the
closed ball of radius $c$ about $y_0\in Y,$
and let $x_0$ be any point of $X.$
Then every $f: X\rightarrow Y$ may be adjusted by an isometry of $Y$
(which will preserve the radius of $f(X))$ so that we get
$d(f(x_0),y_0)\leq c,$ and after this adjustment, $f(X)$ will lie
in the closed ball of radius $c+\diam(X)$ about $y_0.$
Letting $Y'$ denote the closed ball of any radius $r'>c+\diam(X)$
about $y_0,$ we see as in the proof of~(i) that the radii of these
image sets $f(X)$ in $Y'$ will equal their radii within $Y,$
and applying part~(ii) of the preceding lemma with $Y'$ for $Y,$ we
get the desired conclusion.
\end{proof}

\section{Related literature (and one more question).}\label{S.cite}

Lemma~\ref{L.S} above, determining the mapping radius of a circle in
a normed vector space $V,$ occurs frequently in the literature
(with $\E^3$ or $\E^n$ for $V)$ as an offshoot of the proof
of Fenchel's Theorem, the statement that the total curvature of a
closed curve $C$ in $\E^3$ is at least~$2\pi,$ with equality only
when $C$ is planar and convex~\cite[Satz~I\<]{F}.
To prove that theorem, Fenchel noted that this total curvature is the
length of the curve in the unit sphere $S^2$ traced by the unit
tangent vector to $C,$ and that that curve cannot lie wholly in an open
hemisphere of $S^2$ (nor in a closed hemisphere unless $C$ is planar).
He completed the proof by showing~\cite[Satz~I$\!'$]{F} that a closed
curve of length $<2\pi$ (respectively, equal to $2\pi)$ in $S^2$ must
lie in an open hemisphere (respectively, must either lie in
an open hemisphere or be a union of two great semicircles).
In our language, this says that a circle of circumference $<2\pi,$ made
a metric space using arc-length, has mapping radius $<\pi/2$ in $S^2$
and (along with some additional information) that
the circle of arc-length exactly $2\pi$ has mapping radius $\pi/2.$

Subsequent authors~\cite{C+K}, \cite[Lemma on p.30]{Chern},
\cite{RAH}, \cite{R+HS}, \cite{BS}, \cite{PCT} gave simpler proofs
of Fenchel's Satz~I$\!'\!$ (similar to our proof of
Lemma~\ref{L.S}), and/or generalized that result from $S^2$ to $S^n,$
and/or obtained the more precise result that
the mapping radius of a circle of length $L\leq 2\pi$ in $S^n$ is
$L/4,$ and/or noted that the same method also gives the analogous result
with $\E^n,$ or indeed any of a large class of geometric structures,
in place of $S^n.$

The last-mentioned generalizations were based on the observation that
the concept of the {\em midpoint} of a pair of
points can be defined, and behaves nicely, in many geometric contexts.
I do not know whether more general convex linear
combinations, such as we used in~(\ref{x.ci}) and in the proof of
Theorem~\ref{T.cnv+int}, can be defined outside the context
of vector spaces so as to behave nicely; hence the emphasis in this
note on vector spaces and their convex subsets.
A.\<Weinstein (personal communication) suggests that an
approach to ``averaging'' of points introduced by Cartan and
developed further by Weinstein in \cite{AW} might serve this function.
J.\<Lott (personal communication) points similarly to the concepts
of Hadamard space~\cite{BH} and Busemann convex space~\cite{HB}.

The results on closed curves of length $L$ in the unit
sphere cited above all take $L\leq 2\pi.$
If we write $S^1_L$ for a circle of circumference $L$ with
arc-length metric, and $S^n$ for
the unit $\!n\!$-sphere (of circumference $2\pi)$ with geodesic
distance as metric, it is clear that the result $\mr(S^1_L,S^n)=L/4$
cannot be expected to hold when $L>2\pi;$ but it would be
interesting to investigate how that mapping radius does behave
as a function of $L.$
For all $L,$ $\mr(S^1_L,S^n)<\pi,$ since a curve of fixed length cannot
come arbitrarily close to every point of $S_n,$ and if it misses
the open disk of geodesic radius $r$ about a point $p,$ then it
is contained in the closed disc of geodesic
radius $\pi-r$ about the antipodal point.

Many of the papers referred to above consider arcs as well as closed
curves; i.e., also study $\mr([0,L],\linebreak[0]S^n),$
and prove that for $L\leq\pi,$ this equals $L/2.$
Again, the case of larger $L$ would be of interest.
So we ask

\begin{question}\label{Q.sphere}
For fixed $n>1,$ how does $\mr(S^1_L,S^n)$ behave
for $L>2\pi,$ and how does $\mr([0,L],\linebreak[0]S^n)$
behave for $L>\pi,$ as functions of $L$?

For instance, are these two functions piecewise analytic?
\end{question}

It seems likely that there will be ranges of values of $L$ in which
different configurations of a closed curve or arc give
maximum radius, and that the value of this radius will be an analytic
function of $L$ within each such range.
(I conjecture that for all $L$ between $2\pi$ and a value somewhat
greater than $3\pi,$ $\mr(S^1_L,S^2)$ will be realized by a
``$\!3\!$-peaked crown'', consisting of $6$ arcs of great circles,
with midpoints equally spaced along a common equator.
For $\mr(S^1_L,S^n)$ with $n>2,$ I have no guesses.)

Many papers in this area also consider the smallest
``box'' -- in various senses -- into which one can fit all curves,
or closed curves, of unit length \cite{C+K} \cite{snake} \cite{S+W},
or all point-sets of unit diameter \cite{DG}.
These do not translate into statements about our concept of
mapping radius for two reasons.
First, they deal with arc length in the Euclidean metric, but with
``boxes'' which, though they could in many cases be considered closed
balls in another metric, are not balls in the Euclidean
metric; and our formalism of mapping radius
does not look at more than one metric on $Y$ at a time.
Second, they generally allow rotations as well as translations in
fitting the box around the curve, while in looking at radii we
only have one closed ball of each radius centered at a given point.
\vspace{6pt}

The intuitive interest of Question~\ref{Q.sphere} above
arises in part from a special property of the sphere: that a large
open or closed ball, i.e., one that falls just short of covering $S^n,$
has for complement a small closed or open ball.
For spaces $Y$ not having this property, the most natural analogs
of those questions might be the corresponding questions
about ``mapping co-radii'', given by the definitions
\begin{xlist}\item\label{x.corad}
$\mathrm{corad}_Y(A)\ =\ \sup_{y\in Y}\ \inf_{a\in A}\ d(a,y)$
\quad$(A\subseteq Y),$
\end{xlist}
\begin{xlist}\item\label{x.mcorad}
$\mathrm{map\mbox{-}corad}(X,Y)\ =\ %
\inf_{f\in\M(X,Y)}\ \mathrm{corad}_Y(f(X))\ \\[6pt]
\hspace*{1.12in}
=\ \inf_{f\in\M(X,Y)}\ \sup_{y\in Y}\ \inf_{x\in X}\ d(f(x),y)$
\end{xlist}
(cf.~(\ref{x.rad}) and~(\ref{x.ir})).
So, for instance, one might ask about the values of
$\mathrm{map\mbox{-}corad}([0,L],B^2)$ for $B^2$ the closed unit
disc in $\R^2,$ as a function of $L.$

(I'm not sure that ``co-radius'' is a good choice of term:
one could argue that that term would more appropriately
apply either to $\rad_Y(Y{-}A),$ or to what in the notation
of~(\ref{x.corad}) would be written
$\mathrm{corad}_Y(Y\,{-}\,A).$
So the above names are just suggestions,
which others may choose to revise.)

\section{Appendix: The Arens-Eells space of $X.$}\label{S.AE}

At the end of \S\ref{S.gen}, I mentioned that every metric space $X$
admits an embedding in a normed vector space $U$ having
the universal property that Corollary~\ref{C.notuniv}(ii) showed that
the embedding we were considering there did {\em not} have.
The construction in
question was introduced by Arens and Eells~\cite{AE}, and
its universal property noted by Weaver~\cite[Theorem~2.2.4]{NW},
who calls it the Arens-Eells space of $X.$
Weaver is there most interested in this space as
a pre-dual to the Banach space of Lipschitz functions on $X.$
I will sketch below a motivation for the same object in terms
of the universal property.
My description will also make a couple of
technical choices different from those of~\cite{AE} and~\cite{NW}.

Essentially the same construction arises in mathematical economics,
in the study
of the ``transportation problem'' \cite{DG2}, cf.\ \cite[\S2.3]{NW}.
What to us will be the norm of an element of the Arens-Eells space
appears there as the minimum cost of transporting goods from a given
set of sources to a given set of markets.
\vspace{6pt}

To lead up to the construction, let a metric space $X$ be given,
consider any map (as always, nonexpansive) $f$ of $X$ into a
normed vector space $V,$ and let us ask, as a sample question: If we
know the distances among four points $x_1,\ x_2,\ x_3,\ x_4\in X,$
what can we say about $||f(x_1)+f(x_2)-f(x_3)-f(x_4)||$?

Clearly, this will be bounded above by
$||f(x_1){-}f(x_3)||\,+\,||f(x_2){-}f(x_4)||
\leq d(x_1,x_3)+d(x_2,x_4).$
The other way of pairing terms of opposite
sign similarly gives the bound $d(x_1,x_4)+d(x_2,x_3).$
Hence

\begin{xlist}\item\label{x.x1234}
$||\<f(x_1)+f(x_2)-f(x_3)-f(x_4)\<||~ \leq~
\min\<(d(x_1,x_3)+d(x_2,x_4),\ d(x_1,x_4)+d(x_2,x_3)).$
\end{xlist}

For a similar, but slightly less straightforward case, suppose we want
to bound $||\<3f(x_1)+f(x_2)-2f(x_3)-2f(x_4)||.$
We cannot, as before, pair off terms whose coefficients
in this expression happen to be the same except for sign.
There are, however, ways of breaking up that expression as a
linear combination of differences; and a little experimentation
shows that all ways
of doing so are convex combinations of two extreme decompositions.
These two cases lead to the bound
\begin{xlist}\item\label{x.x1234'}
$||\<3f(x_1)+f(x_2)-2f(x_3)-2f(x_4)\<||~ \leq\\[4pt]{}\hspace{1in}
\min\<(2\<d(x_1,x_3)\<{+}\<d(x_1,x_4)\<{+}\<d(x_2,x_4),\ \,
2\<d(x_1,x_4)\<{+}\<d(x_1,x_3)\<{+}\<d(x_2,x_3)).$
\end{xlist}

We will not stop here to prove that~(\ref{x.x1234})
and~(\ref{x.x1234'}) are best bounds.
Let us simply observe that these considerations suggest that
the norm of such a linear combination of images of points
of $X$ under a {\em universal} map
$e:X\rightarrow U$ should be given by an infimum of linear
combinations of the numbers $d(x,y)$ $(x,y\in X)$ with nonnegative
real coefficients, the infimum being taken
over all such linear expressions which, when
each $d(x,y)$ is replaced by $e(x)-e(y),$ give the required element.

An obvious problem is that the only elements we get in this
way are those in which the sum of the coefficients of the
members of $e(X)$ is $0.$
This difficulty is intrinsic in the situation:  There will not in fact
exist a nonexpansive map of $X$ into a normed vector space
having the standard sort of universal mapping
property with respect to such maps,
because, though the condition of nonexpansivity bounds
the distances among images of points of $X,$ it does not
bound the distances between such images and $0;$
so universality would force the images of points of $X$
to have infinite norm.

What we can get, rather, is a set-map $e$ of $X$ into a vector space
$U,$ and a norm {\em on the subspace} $U_0$ of linear combinations of
images of points of $X$ with coefficients summing to $0,$ such that for
all $x,y\in X,$ $||e(x)-e(y)||\leq d(x,y),$ and which has
the universal property that given any nonexpansive map $f$ of $X$ into a
normed vector space $V,$ there exists a unique vector-space homomorphism
$g:U\rightarrow V$ which satisfies $f=ge,$ and is nonexpansive
{\em on $U_0.$}
Observe that the norm on $U_0$ induces a metric on each coset of
that subspace; in particular, on the coset $U_1$ of elements in which
the sum of all coefficients is $1,$ which is the affine span
of the image of $X.$
The map of $X$ into that coset is nonexpansive, and the asserted
universal property of $e$ is easily seen to
yield~(\ref{x.gnot_<e}), the property that the construction
of \S\ref{S.gen} failed to have.

Weaver's answer to the same distance-to-$\!0\!$ problem is to use
metric spaces with basepoint, and basepoint-respecting
maps, the basepoint of a vector space being $0.$
This has the advantage of giving a universal property in the
conventional sense, with both $U$ and $V$ in the category of
normed vector spaces.
However, it requires one to make a possibly unnatural
choice of basepoint in $X;$ changes in that choice induce isometries
on the universal space, which, though affine, are not linear.
The approach I actually find most natural is to regard
what I have called $U_1$ as a ``normed affine space'', that is,
a set with a simply transitive group of ``translation'' maps by
elements of a normed vector space, and to note that $U_1$ has a genuine
universal property in the category of normed affine spaces.
However, the development
of that concept would be an excessive excursion for this appendix.
Still another approach would be to work with ``normed'' vector spaces
where the norm is allowed to take on the value $+\infty.$
In any case, it is straightforward to verify that the
Arens-Eells space of $X$ as described in \cite{NW} and my $U_0$ are
isometrically isomorphic, so below I will quote results of Weaver's,
tacitly restated for my version of the construction.

The details, now: let $U$ be the vector space of all
real-valued (i.e., {\em not} necessarily nonnegative)
measures $\mu$ on $X$ with finite support, and, as before, for each
$x\in X$ let $\mu_x$ be the probability measure with support $\{x\}.$
Thus, $\{\mu_x\mid x\in X\}$ is a basis of $U.$
Let $U_0\subseteq U$ denote the subspace of measures $\mu$
satisfying $\mu(X)=0.$
Let $W$ similarly denote the space of all real-valued
measures on $X\times X$ with finite support;
for each $(x,y)\in X\times X,$ let $\nu_{x,y}$ be the probability
measure with support $\{(x,y)\},$ and let $W_+\subseteq W$ be the cone
of nonnegative linear combinations of the $\nu_{x,y},$
i.e., the nonnegative-valued measures on $X\times X.$
Finally, let $D: W\rightarrow U$ be the linear map defined by
the condition
\begin{xlist}\item\label{x.defD}
$D(\nu_{x,y})\ =\ \mu_x-\mu_y$ \quad for $x,y\in X,$
\end{xlist}
which clearly has image $U_0.$
We now define the norm of any $\mu\in U_0$ by
\begin{xlist}\item\label{x.univnorm}
$||\mu||~ =~ \inf_{\strut\nu\in W_+,\,D(\nu)=\mu}
\ \int_{\strut(x,y)\in X\times X} d(x,y)\ d\nu\,.$
\end{xlist}
It is easy to verify that this indeed gives a norm with
the desired universal property.
The one verification that is not immediately obvious is that it
is a norm rather than a pseudonorm; i.e., that it is nonzero for
nonzero $\mu\in U_0.$
To get this, one first proves the desired universal property in the
wider context of pseudonormed
vector spaces, then notes that given any nonzero
$\mu=\sum_I a_i\,\mu_{x_i}\in U_0$ $(I$ finite, all $a_i$ nonzero), one
can find a nonexpansive map $f:X\rightarrow\R$ which is zero at all
but one of the $x_i,$ say $x_{i_0},$
from which it follows by the universal property
that $||\mu||\geq |\sum\,a_i\ f(x_i)|=|a_{i_0}\ f(x_{i_0})|>0.$

Weaver \cite[Theorem~2.3.7(b)]{NW} shows that the infimum
in~(\ref{x.univnorm}) is always attained, and in fact, by a $\nu$
whose ``support'' in $X$ (the set of points which appear as $x$ or
$y$ in terms $\nu_{x,y}$ having nonzero coefficient
in the expression for $\nu)$ coincides
with the support of $\mu$ (the set of $x$ such that $\mu_x$
appears with nonzero coefficient in the expression for $\mu).$
Our next proposition strengthens this result a bit.
For brevity, we will call on Weaver's result in the proof, but
I will sketch afterward how the argument can be made self-contained.

We will use the following notation and terminology.
Given $\nu=\sum_{j\in J} b_j\,\nu_{x_j,y_j}\in W$ (where $J$ is a
finite set, the pairs $(x_j,y_j)$ for $j\in J$ are distinct,
and all $b_j\neq 0),$ let $\Gamma(\nu)$
be the directed graph having for vertices all points of $X,$
and for directed edges the finitely many pairs $(x_j,y_j)$ $(j\in J).$
Let us define the {\em positive support} of a directed graph $\Gamma$
as the set of vertices which are initial points of its
edges, and its {\em negative support} as
the set of vertices which are terminal points.
For $\nu\in W,$ we will call the positive and negative supports
of $\Gamma(\nu)$ the positive and negative supports of $\nu.$
On the other hand, for $\mu=\sum_I a_i\,\mu_{x_i}\in U_0,$ let us
define its positive support to be $\{x_i\mid a_i>0\},$
and its negative support to be $\{x_i\mid a_i<0\}.$
These are clearly disjoint.
Note that when $\nu\in W_+,$ the positive support of $D(\nu)$
is contained in the positive support of $\nu,$ and contains
all elements thereof that are not also in the
negative support of $\nu,$ and that the negative support of $D(\nu)$
has the dual properties.

When we speak of a {\em cycle} in a directed graph, we shall mean
a cycle in the corresponding undirected graph; we shall also
understand that in a cycle no vertex is traversed more than once.
Note that a cycle of length $1$ in $\Gamma(\nu)$ can only arise when
a term $\nu_{x,x}$ has nonzero coefficient in $\nu,$
while a cycle of length
$2,$ i.e., the presence of two edges between $x$ and $y,$ can only occur
if $\nu_{x,y}$ and $\nu_{y,x}$ both have nonzero coefficients.
But a cycle of length $n>2$ involving a given sequence of vertices may
arise in any of $2^n$ ways, depending on the orientations of the edges.

We now prove

\begin{proposition}[cf.~{\cite[Theorem 3.3, p.84]{DG2}}]\label{P.graph}
Let $\mu\in U_0.$
Then the infimum of~\textup{(\ref{x.univnorm})}
is attained by an element $\nu\in W_+$ \textup{(}not
necessarily unique\textup{)} whose positive and negative
supports coincide respectively with the positive and negative supports
of $\mu,$ and whose graph $\Gamma(\nu)$ has no cycles.
\end{proposition}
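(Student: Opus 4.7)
The plan is to start with an optimizer $\nu$ for~(\ref{x.univnorm}) whose positive and negative supports coincide with those of $\mu$, the existence of which is given by Weaver's~\cite[Theorem~2.3.7(b)]{NW}, and then iteratively modify it to eliminate cycles in $\Gamma(\nu)$ one at a time without increasing the integral, strictly decreasing the edge count at each step. Self-loops $\nu_{x,x}$ may be dropped at the outset, as they contribute $0$ both to $D(\nu)$ and to the integral. Two-cycles are impossible under support-matching, since they would place both endpoints in the intersection of the positive and negative supports of $\nu$, which is empty because the positive and negative supports of $\mu$ are disjoint.

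Suppose then that $\Gamma(\nu)$ has a cycle $x_1, x_2, \ldots, x_n, x_1$ of length $n\ge 3$, with $i$-th edge equal to one of $\nu_{x_i,x_{i+1}}$ or $\nu_{x_{i+1},x_i}$; call it $e_i$, set $\epsilon_i = +1$ or $-1$ accordingly, and write $b_i > 0$ for the coefficient of $e_i$ in $\nu$. A telescoping calculation, using $D(\nu_{x,y}) = \mu_x - \mu_y$, shows that $\kappa := \sum_i \epsilon_i\, e_i$ lies in $\ker(D)$. Thus $\nu_t := \nu + t\kappa$ satisfies $D(\nu_t) = \mu$ for every real $t$, and the nonnegativity condition $\nu_t \in W_+$ cuts out an interval of $t$'s containing $0$, at whose (finite) endpoints at least one coefficient $b_i + t\epsilon_i$ vanishes.

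The integral changes linearly, by $t \sum_i \epsilon_i\, d(x_i, x_{i+1})$. If this quantity is nonzero, one chooses the sign of $t$ opposite to it, pushes $|t|$ to the corresponding endpoint of the allowed interval, and reduces both the cost and the edge count; if it is zero, one pushes $t$ to either endpoint at no cost to the integral. The edge case where the interval of allowable $t$ is one-sided (all $\epsilon_i$ of a common sign, corresponding to a consistently oriented directed cycle) is actually the easiest: in that case $\sum_i \epsilon_i\, d(x_i, x_{i+1})$ automatically has that same sign, so the favorable direction for cost reduction coincides with the bounded side.

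Finally, the support-matching property is preserved throughout: the procedure only removes edges, so the positive and negative supports of $\nu$ can only shrink, while the identity $D(\nu) = \mu$ together with $\nu \geq 0$ forces each vertex of the positive (resp.\ negative) support of $\mu$ to remain the source (resp.\ sink) of at least one edge of $\Gamma(\nu)$. Since the number of edges is a nonnegative integer strictly decreasing at each step, the procedure terminates at a $\nu$ of the desired form. The main obstacle is the case analysis in the cycle-reduction step, particularly the verification that when all edges of the cycle happen to share a common orientation, the sign of the available cost change still cooperates with the one-sided constraint on $t$.
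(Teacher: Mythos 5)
There is a genuine gap, and it sits exactly where the proposition strengthens Weaver's result. Weaver's Theorem~2.2.4/2.3.7(b), as cited, produces an optimal $\nu$ whose \emph{support} --- the set of points appearing as $x$ or $y$ in some term $\nu_{x,y}$ with nonzero coefficient, i.e.\ the \emph{union} of the positive and negative supports --- coincides with the support of $\mu.$ It does not give you that the positive and negative supports separately coincide with those of $\mu,$ which is precisely the extra content of the proposition. Your closing argument only shows one containment (each point of the positive support of $\mu$ must remain a source of some edge), not the reverse: a ``pass-through'' vertex, lying in both the positive and negative supports of $\nu,$ can survive your entire procedure. For instance, with $\mu=2\mu_a-\mu_b-\mu_c$ and three collinear points satisfying $d(a,c)=d(a,b)+d(b,c),$ the element $\nu=2\nu_{a,b}+\nu_{b,c}$ is optimal, has union support equal to that of $\mu,$ and has a cycle-free graph $a\rightarrow b\rightarrow c;$ yet $b$ lies in the positive support of $\nu$ but not of $\mu.$ Your cycle-elimination step never touches such configurations, since a directed path is acyclic. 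Relatedly, your dismissal of $\!2\!$-cycles is circular: it invokes the disjointness of the positive and negative supports of $\nu,$ which is the very property not yet established.

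The paper closes this gap with an extra device you are missing: among the optimal, support-matching $\nu$ minimizing the number of edges, it further minimizes the \emph{sum of all coefficients} (achievable by a compactness argument). If a pass-through vertex $p$ existed, with edges $(q,p)$ and $(p,r),$ one perturbs by $\lambda(\nu_{q,r}-\nu_{q,p}-\nu_{p,r}),$ which lies in $\ker D,$ does not increase the cost (by the triangle inequality $d(q,r)\leq d(q,p)+d(p,r)),$ does not increase the edge count, but strictly decreases the coefficient sum by $\lambda$ --- a contradiction. Your cycle-removal step itself is essentially the paper's (the $\ker D$ element supported on the cycle, the affine cost, pushing $\lambda$ to an endpoint to kill an edge), so the fix is to add the coefficient-sum minimization to your choice of $\nu$ and run this second reduction; as it stands, the proof establishes acyclicity but not the support statement.
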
\begin{proof}
As mentioned, Weaver proves the existence of a $\nu\in W_+$
with $D(\nu)=\mu$ which achieves the infimum~(\ref{x.univnorm})
and has the same support as $\mu.$
Let $\nu$ be chosen, first, to have these properties;
second, among such elements, to minimize the total number of
edges in $\Gamma(\nu),$ and finally, to minimize the sum of the
coefficients of all the $\nu_{x,y}$ in its expression.
This last condition is achievable because the set of elements
of $W_+$ which are linear combinations of a given finite family
of the $\nu_{x,y},$ and for which the coefficients of these elements
are all $\leq$ some constant, is compact; so after finding some
$\nu$ with $D(\nu)=\mu$ which achieves the minimum~(\ref{x.univnorm}),
has the same support as $\mu,$ and minimizes the number of
edges in $\Gamma(\nu),$ we may restrict our search for elements also
minimizing the coefficient-sum to the compact set of elements having
these properties and having every coefficient less than
or equal to the coefficient-sum of the element we have found.

Suppose, now, that $\Gamma(\nu)$ has a cycle.
Thus, we may choose distinct vertices $p_1,\dots,p_k,$ and for each
$j\in\{1,\dots,k\},$ a term $\nu_{p_j,p_{j+1}}$ or
$\nu_{p_{j+1},p_j}$ occurring with positive coefficient in $\nu,$
where the subscripts $j$ are taken modulo~$k.$
(If $k>2$ and both $\nu_{p_j,p_{j+1}}$ and $\nu_{p_{j+1},p_j}$ occur
in $\nu,$ we choose one of these arbitrarily.
If $k=2,$ we make sure that the terms we choose for
$j=1,\,2$ are distinct, one
being $\nu_{p_1,p_2}$ and the other $\nu_{p_2,p_1}.)$
For each $j\in\{1,\dots,k\}$ let us now define $\nu'_{p_j,p_{j+1}}$
to be $\nu_{p_j,p_{j+1}}$ if that is the $\!j\!$th term in the list
we have chosen, or $-\nu_{p_{j+1},p_j}$ if the $\!j\!$th term
in that list is $\nu_{p_{j+1},p_j},$
and let $\nu'=\sum_j\nu'_{p_j,p_{j+1}}\in W.$
In general, $\nu'\notin W_+,$ but for all $\lambda\in\R$ near enough
to $0,$ we have $\nu+\lambda\<\nu'\in W_+,$
since the relevant coefficients in $\nu$ are strictly positive.
Note that for each $j,$ $D(\nu'_{p_j,p_{j+1}})=\mu_j-\mu_{j+1},$ hence
$D(\nu')=0,$ hence $D(\nu+\lambda\nu')=D(\nu)=\mu.$

Clearly, $\int_{(x,y)\in X\times X} d(x,y)\,d(\nu{+}\lambda\nu')$
is an affine function of $\lambda.$
Hence it must be constant, otherwise, using small $\lambda$ of
appropriate sign, we would get a contradiction to the assumption that
$\nu$ achieves the minimum of~(\ref{x.univnorm}); so all
the elements $\nu+\lambda\<\nu'$ achieve this same minimum.
Now some choice of $\lambda$ will
cause $\lambda\<\nu'$ to exactly cancel the smallest among the
coefficients of terms $\nu_{p_j,p_{j+1}}$ or
$\nu_{p_{j+1},p_j}$ in our cycle in $\Gamma(\nu).$
Thus, $\nu+\lambda\<\nu'$ contradicts the minimality assumption
on the number of edges in $\Gamma(\nu).$
This contradiction shows that $\Gamma(\nu)$ has no cycles.

Next, let us compare the positive and negative supports of $\nu$
with those of $\mu.$
We have chosen $\nu$ so that
its support, namely the union of its positive and negative supports,
coincides with the support of $\mu;$ and since $\mu=D(\nu),$
the positive and negative supports of $\nu$ will each {\em contain}
the corresponding support of $\mu.$
So if these inclusions are not both equalities, we must have a vertex
$p$ which is both in the positive and the negative support of $\nu;$
i.e., such that there is an edge $(q,p)$ of
$\Gamma(\nu)$ leading into $p,$ and an edge $(p,r)$ leading out of it.
Let $\nu'=\nu_{q,r}-\nu_{q,p}-\nu_{p,r}.$
Like the element denoted by that symbol in the preceding argument,
this satisfies $D(\nu')=0.$
Let us again form $\nu+\lambda\,\nu',$ this time choosing the value
$\lambda>0$ which leads to the cancellation of the smaller
of the coefficients of $\nu_{q,p}$ and $\nu_{p,r}$ in $\nu,$
or of both if these coefficients are equal.
Since this does not reverse the sign of either
of these coefficients, $\nu+\lambda\,\nu'$ still belongs to $W_+.$
Note that $\int_{(x,y)\in X\times X} d(x,y)\,d(\nu{+}\lambda\nu')\leq
\int_{(x,y)\in X\times X} d(x,y)\,d\nu,$ since by the
triangle inequality, $d(q,r)\leq d(q,p)+d(p,r);$
so the property of minimizing the latter integral among elements
of $W_+$ mapped to $\mu$ by $D$ has not been lost.
Also, $\Gamma(\nu+\lambda\nu')$ has dropped at least one edge
that belonged to $\Gamma(\nu),$ since at least one coefficient
was canceled, and has gained at most one edge, namely $(q,r)$
(if that was not previously present); so the total number of
edges has not increased.
Finally, when we look at the sum of all the coefficients,
we see that the coefficient of $\nu_{q,r}$ has increased by $\lambda,$
while those of $\nu_{q,p}$ and $\nu_{p,r}$
have both decreased by $\lambda,$ so there has been a net
change of $-\lambda<0.$
Thus, we have a contradiction to our choice of $\nu$ as minimizing
that sum.
This completes the proof of the main assertion of the proposition.

Let us verify, finally, the parenthetical comment that
the $\nu$ of the proposition may not be unique.
Let $X$ be a $\!4\!$-point space $\{x_1,x_2,x_3,x_4\}$
where the distance between every pair of distinct points is $1,$
and let $\mu=\mu_{x_1}+\mu_{x_2}-\mu_{x_3}-\mu_{x_4}.$
It is not hard to check that in this case, the only elements $\nu$ that
can possibly satisfy the conditions of the proposition are
$\nu_{x_1,x_3}+\nu_{x_2,x_4}$ and $\nu_{x_1,x_4}+\nu_{x_2,x_3}.$
Since these give the same value for the integral of~(\ref{x.univnorm}),
$d(x_1,x_3)+d(x_2,x_4)=2=d(x_1,x_4)+d(x_2,x_3),$
each satisfies our conditions.

(Of course, for {\em most} choices of metric on this set
$X,$ one of these two values is smaller than the other, and we
then get a unique $\nu$ satisfying the conditions of the proposition.)
\end{proof}

To get a self-contained version of the above proof which includes
the existence result we cited from~\cite{NW}, one may start by
looking at any finite subset $X_0$ of $X$ containing the support
of $\mu,$ verify by compactness as above that
the infimum of~(\ref{x.univnorm}) over elements $\nu$ with support
contained in $X_0$ is achieved, then note that any element in the
support of $\nu$ but not in the support of $\mu$
must belong to both the positive and negative supports of $\nu,$
a situation excluded by the proof.
Letting $X_0$ then run over all finite subsets of $X$ containing
the support of $\mu,$ one sees that the infimum of~(\ref{x.univnorm})
exists, and is simply the infimum with
$\nu$ restricted to have support in the support of $\mu.$

We remark that the final condition of the above proposition, that
$\Gamma(\nu)$ have no cycles, is not entailed by the other conditions.
E.g., returning to $X=\{x_1,x_2,x_3,x_4\}$ with all distances $1,$
we see that every convex linear combination $\nu$ of the two elements
that we
found, $\nu_{x_1,x_3}+\nu_{x_2,x_4}$ and $\nu_{x_1,x_4}+\nu_{x_2,x_3},$
still minimizes~(\ref{x.univnorm}), and still has support $X;$ but
if $\nu$ is a proper convex combination of those two
elements, then $\Gamma(\nu)=\Gamma(\nu_{x_1,x_3}{+}\,\nu_{x_2,x_4})\cup
\Gamma(\nu_{x_1,x_4}{+}\,\nu_{x_2,x_3}),$ which contains (indeed, is)
a cycle.

Let us now show, however, that when, as in the statement of the
proposition, $\Gamma(\nu)$ is cycle-free, it uniquely determines $\nu.$
Thus, the calculation of the norm~(\ref{x.univnorm}) reduces in
principle to checking finitely many~$\nu.$

\begin{lemma}\label{L.*Gdetsnu}
Let $\mu\in U_0,$ and let $\Gamma$ be a directed graph
with vertex-set $X$ and without cycles.
Then there is at most one $\nu\in W$ \textup{(}and so, a fortiori,
at most one $\nu\in W_+)$ such that $D(\nu)=\mu$ and
$\Gamma(\nu)\subseteq\Gamma.$

To characterize this element $\nu,$ consider any edge $(x,y)$
in $\Gamma.$
Let $\Gamma_x$ \textup{(}containing $x)$ and $\Gamma_y$
\textup{(}containing $y)$ be the connected components into
which the connected component of
$\Gamma$ containing $(x,y)$ separates when that edge is removed.
Then the coefficient in $\nu$ of $\nu_{x,y}$ is the common
value of $\int_{\Gamma_x}d\mu$ and $-\int_{\Gamma_y}d\mu,$ i.e., is
both the sum of the coefficients of $\mu_z$ over $z$ in $\Gamma_x,$
and the negative of the corresponding sum over~$\Gamma_y.$
\end{lemma}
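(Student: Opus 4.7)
My plan is to show that any $\nu \in W$ with $\Gamma(\nu) \subseteq \Gamma$ and $D(\nu) = \mu$ is forced, edge by edge, by a telescoping-sum argument applied to a component of $\Gamma$ after removing a single edge. Since this produces an explicit formula for each coefficient, it yields the uniqueness and the characterization simultaneously.

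Concretely, the hypothesis $\Gamma(\nu) \subseteq \Gamma$ lets us write $\nu = \sum_{(a,b)} c_{a,b}\,\nu_{a,b}$, where the sum runs over edges of $\Gamma$ and each $c_{a,b} \in \R$ (possibly zero or negative). Applying $D$ and reading off the coefficient of $\mu_z$ in $D(\nu) = \mu$ gives, for each $z \in X$, the linear relation
$$\sum_{b:\,(z,b)\in\Gamma} c_{z,b}\ -\ \sum_{a:\,(a,z)\in\Gamma} c_{a,z}\ =\ \mu(\{z\}).$$
Now fix an edge $(x,y)$ of $\Gamma$, let $T$ be its connected component, and let $\Gamma_x \ni x$ and $\Gamma_y \ni y$ be the two components of $T$ after removal of $(x,y)$. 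Sum the above relation over $z \in \Gamma_x$. Any edge having both endpoints in $\Gamma_x$ contributes $+c_{a,b}$ at $z=a$ and $-c_{a,b}$ at $z=b$, and so cancels. Because the connected components of $\Gamma$ partition the vertex set $X$, no edge of any other component of $\Gamma$ is incident to $\Gamma_x$; within $T$, the only edge with exactly one endpoint in $\Gamma_x$ is $(x,y)$ itself. Since $x \in \Gamma_x$ is the initial vertex of that edge, the left-hand sum collapses to $+c_{x,y}$, while the right-hand side is $\sum_{z \in \Gamma_x} \mu(\{z\}) = \int_{\Gamma_x} d\mu$. This pins down $c_{x,y}$ uniquely and yields the formula.

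Applying the same calculation over $\Gamma_y$ (where $y$ is the terminal vertex of the removed edge) produces $-c_{x,y} = \int_{\Gamma_y} d\mu$, giving the other half of the claimed equality. Equivalently, summing the system over all $z \in T$ makes every edge of $T$ telescope away, forcing $\int_T d\mu = 0$, which is exactly the consistency of the two expressions for $c_{x,y}$. The only real obstacle is keeping the orientation bookkeeping straight, so that when $(x,y)$ is understood as the directed edge $x \to y$ and $x \in \Gamma_x$, the telescoping sum leaves $+c_{x,y}$ rather than $-c_{x,y}$; and confirming that, because $\Gamma$ is a forest, no stray edges from another component can spoil the cancellation inside $\Gamma_x$.
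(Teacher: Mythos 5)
Your argument is correct and is essentially the paper's own proof: both compute $\int_{\Gamma_x}d\mu$ from $\mu=D(\nu)$, observe that edges internal to $\Gamma_x$ cancel and edges disjoint from $\Gamma_x$ contribute nothing, and conclude that the lone crossing edge $(x,y)$ leaves exactly its coefficient (and its negative on the $\Gamma_y$ side). Your version merely makes the index bookkeeping and the forest/partition justification more explicit.
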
\begin{proof}
We will prove the assertion of the second paragraph, from which
that of the first clearly follows.

Writing $\mu=D(\nu),$ the contributions to
the expression $\int_{\Gamma_x}d\mu$ from any
term $\nu_{p,q}$ such that both $p$ and $q$ lie in $\Gamma_x$ clearly
cancel, while terms such that neither $p$ nor $q$ lies in $\Gamma_x$
contribute nothing.
This leaves the $\nu_{x,y}$ term, which contributes precisely its
coefficient, leading to the first description of that coefficient.
Likewise, this term contributes the negative of its
coefficient to $\int_{\Gamma_y}d\mu,$ yielding the second description.
\end{proof}

\begin{corollary}\label{C.match}
Suppose $\mu\in U_0$ is integer-valued, and let
$\nu$ be an element of $W_+$ with the properties that $D(\nu)=\mu,$
that $\nu$ has the same positive and negative supports as $\mu,$
and that $\Gamma(\nu)$ has no cycles.

Then for every $x\in X$ such that the coefficient of $\mu_x$
in $\mu$ is $\pm 1,$ the vertex $x$ is a leaf of $\Gamma(\nu).$

Hence, if $\mu$ has the property that the coefficient of {\em every}
$\mu_x$ is $\pm 1,$ then $\nu$ is
induced, in the obvious way, by a bijection between the positive
support of $\mu$ and the negative support of $\mu.$
Thus, in that case, letting $n$ be the common cardinality of these
supports, there are exactly $n!$ such $\nu\in W_+.$
\end{corollary}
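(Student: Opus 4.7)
The plan is to apply Lemma~\ref{L.*Gdetsnu} to deduce that $\nu$ has integer coefficients, and then use the vertex-balance identity coming from $D(\nu)=\mu$ at each vertex with $|\mu(x)|=1$ to force it to be a leaf; the perfect-matching conclusion and the count $n!$ then drop out almost formally.

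First, since $\Gamma(\nu)$ is cycle-free, Lemma~\ref{L.*Gdetsnu} (applied with $\Gamma=\Gamma(\nu)$) says that $\nu$ is uniquely determined by $\mu$ and $\Gamma(\nu)$, with each coefficient given by a signed sum of $\mu$-values. Because $\mu$ is integer-valued, every coefficient of $\nu$ is an integer; and because $\nu\in W_+$ with $(x,y)\in\Gamma(\nu)$ meaning precisely that $\nu_{x,y}$ appears with strictly positive coefficient, each such coefficient is a \emph{positive integer}, hence $\geq 1$.

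Now fix $x\in X$ with coefficient of $\mu_x$ in $\mu$ equal to $+1$ (the $-1$ case is symmetric). By hypothesis $\nu$ has the same positive and negative supports as $\mu$, so $x$ lies in the positive support of $\nu$ and not in its negative support; therefore no edge of $\Gamma(\nu)$ ends at $x$, and every edge incident to $x$ has the form $(x,y_i)$ for some $y_i$. Since $D(\nu_{x,y})=\mu_x-\mu_y$, reading off the coefficient of $\mu_x$ on both sides of $D(\nu)=\mu$ gives
$$1 \ =\ \sum_i b_{x,y_i},$$
where $b_{x,y_i}\geq 1$ is the coefficient of $\nu_{x,y_i}$ in $\nu$. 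Hence the sum has exactly one term: $x$ is incident to exactly one edge of $\Gamma(\nu)$, i.e., $x$ is a leaf.

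For the second assertion, suppose every coefficient of $\mu$ is $\pm 1$. By the first part, every vertex in the support of $\mu$ is a leaf of $\Gamma(\nu)$; and since $\nu$ has the same support as $\mu$, this accounts for every vertex appearing in $\Gamma(\nu)$. Every edge of $\Gamma(\nu)$ therefore joins two leaves and so forms its own connected component, and each such edge runs from a positive-support vertex of $\mu$ to a negative-support one (with coefficient forced to equal $1$ by the flow identity above). Every support vertex lies in exactly one edge, so $\Gamma(\nu)$ is the graph of a bijection $\sigma$ between the two supports. Conversely, each bijection $\sigma$ yields a valid $\nu:=\sum_{x}\nu_{x,\sigma(x)}$, since $D(\nu)=\sum_x(\mu_x-\mu_{\sigma(x)})=\mu$, the supports match, and a disjoint union of edges is trivially acyclic. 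The main obstacle is simply the careful bookkeeping of ``positive support of $\nu$'' versus ``positive support of $\mu$'' in the leaf argument; once integrality is invoked, everything reduces to the one-line flow identity.
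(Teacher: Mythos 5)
Your proof is correct and follows essentially the same route as the paper's: invoke Lemma~\ref{L.*Gdetsnu} for integrality of the coefficients of $\nu$, use the support hypothesis to see that all edges at $x$ point outward, and read off the coefficient of $\mu_x$ in $D(\nu)=\mu$ to force a single outgoing edge. Your explicit verification of the converse (that every bijection does yield a valid $\nu$) is a small point the paper leaves implicit in its ``clearly follows,'' but it is the same argument.
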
\begin{proof}
Consider any $x$ such that $\mu_x$ has coefficient $+1$ in $\mu.$
Then $x$ is in the positive support
of $\Gamma(\nu),$ but not in the negative support.
The latter condition says that $\nu$
involves no terms $\nu_{y,x}$ $(y\in X),$
so $+1$ is the sum of the coefficients in $\nu$
of the terms $\nu_{x,y}$ $(y\in X-\{x\}).$
Since $\nu\in W_+,$ these coefficients are nonnegative, and
by Lemma~\ref{L.*Gdetsnu} they are integers, so as they sum
to $1,$ only one of them can be nonzero, making $x$ a leaf.
The same argument, mutatis mutandis, gives the case
where the coefficient of $\mu_x$ is~$-1.$

The assertion of the final paragraph clearly follows, since
a directed graph in which every vertex is a leaf
corresponds to a bijection between ``source'' and ``sink'' vertices.
\end{proof}

As sample applications, recall the two computations at the
beginning of this section, with which we
motivated the construction of our universal embedding
$e:X\rightarrow U_0.$
In our present notation, what we were doing
was evaluating the norms in $U_0$ of elements of
the two forms $\mu_{x_1}+\mu_{x_2}-\mu_{x_3}-\mu_{x_4}$ and
$3\mu_{x_1}+\mu_{x_2}-2\mu_{x_3}-2\mu_{x_4}.$
In the first case, the last paragraph of the above corollary leads
to just two graphs, and hence two values of $\nu$ one of which
must achieve the
infimum~(\ref{x.univnorm}), namely $\nu_{x_1,x_3}+\nu_{x_2,x_4}$
and $\nu_{x_1,x_4}+\nu_{x_2,x_3},$ showing that if $f$ is our universal
map $e,$ equality holds in~(\ref{x.x1234}), and
for general $f,$~(\ref{x.x1234}) is the best bound.
(This also establishes the example that we said was ``not hard
to check'' in the next-to-last paragraph of the proof of
Proposition~\ref{P.graph}.)

In the case $\mu=3\mu_{x_1}+\mu_{x_2}-2\mu_{x_3}-2\mu_{x_4},$
Corollary~\ref{C.match} says that
$x_2$ is a leaf of $\Gamma(\nu).$
As it lies in the positive support of $\mu,$ the vertex it is attached
to must lie in the negative support, i.e., must
be either $x_3$ or $x_4.$
In the former case, subtracting $\nu_{x_2,x_3}$ from $\nu$
will give an element $\nu'\in W_+$ which is sent by $D$
to $3\mu_{x_1}-\mu_{x_3}-2\mu_{x_4}.$
Since this $\nu'$ has only one element, $x_1,$
in its positive support, its graph
is uniquely determined, giving $\nu'=\nu_{x_1,x_3}+2\nu_{x_1,x_4},$
hence $\nu=\nu_{x_2,x_3}+\nu_{x_1,x_3}+2\nu_{x_1,x_4}.$
The case where $x_2$ is attached to $x_4$ similarly gives
$\nu=\nu_{x_2,x_4}+2\nu_{x_1,x_3}+\nu_{x_1,x_4},$ and these
together show that~(\ref{x.x1234'}) is a best bound.

I referred earlier to the mathematical
economist's ``transportation problem''.
There, our $d(x,y)$ corresponds to
the cost of transporting a unit quantity of goods from location $x$
to location $y;$ so our definition of $||\mu||$ describes
the minimum cost of transporting goods produced and consumed
at locations and in quantities specified by $\mu.$\vspace{6pt}

Incidentally, the first assertion of Corollary~\ref{C.match} does not
remain true if we weaken ``the coefficient of $\mu_x$ in $\mu$ is
$\pm 1\!$'' to ``the coefficient of $\mu_x$ has least
absolute value among the nonzero coefficients occurring in $\mu.\!$''
For instance, suppose $\mu$ has the form
$3\mu_{x_1}-4\mu_{x_2}+2\mu_{x_3}-4\mu_{x_4}+3\mu_{x_5}.$
Then one of the elements of $W_+$
satisfying the conditions of Corollary~\ref{C.match} is
$\nu=3\nu_{x_1,x_2}+\nu_{x_3,x_2}+\nu_{x_3,x_4}+3\nu_{x_5,x_4}.$
Here $\Gamma(\nu)$ has the form
$x_1\rightarrow x_2\leftarrow x_3\rightarrow x_4\leftarrow x_5,$
so $x_3,$ despite having smallest coefficient in $\mu,$ is not a leaf.
\vspace{6pt}

Proposition~\ref{P.graph} sheds some light on our earlier
``partial universality'' result, Corollary~\ref{C.notuniv}(i).
Given any {\em convex} linear combination of points of our universal
image of $X,$ $\sum_I a_i\<\mu_{x_i}$ $(a_i>0,\ \sum a_i=1),$ and any
point $x\in X$ (which for simplicity we will assume is
not one of the $x_i,$ though the argument can be adjusted to the
case where it is), the difference $\mu_x-\sum a_i\<\mu_{x_i}$ is
an element of $U_0$ with positive support $\{x\}$ and
negative support $\{x_i\mid i\in I\}.$
For this situation, the conditions of Proposition~\ref{P.graph} clearly
lead to a unique $\Gamma(\nu),$ to wit, the tree whose edges
are all pairs $(x,x_i)$ $(i\in I),$ and hence to the unique
choice $\nu=\sum_i a_i\,\nu_{x,x_i}.$
Thus, the right-hand
side of~(\ref{x.univnorm}) comes to $\sum_i a_i\,d(x,x_i),$
which is equal to the right-hand side of~(\ref{x.g_<e}).
In contrast, when one considers the difference between
two general convex linear combinations of elements $\mu_x,$
as in Corollary~\ref{C.notuniv}(ii),
there may be many directed graphs satisfying the conditions
of Proposition~\ref{P.graph}, so the norm of that difference
doesn't have a simple expression.

The universality of the Arens-Eells space $U$ yields
a formula for $\mr(X,\fb{NmV})$ analogous to our
description~(\ref{x.mrConv}) of $\mr(X,\fb{Conv});$ namely,
\begin{xlist}\item\label{x.mrNmV}
$\mr(X,\fb{NmV})\ =\ \inf_{\mu\in U_1}\ \sup_{x\in X}
\ \inf_{\nu\in W_+,\ D(\nu)=\mu-\mu_x}
\ \int_{y,z\in X}\,d(y,z)\ d\nu(y,z).$
\end{xlist}
But this is cumbersome to use.
E.g., the reader might try working through a verification,
for the space described by~(\ref{x.xyz}),
of the statement that the $\mu$ implicit in~(\ref{x.p=3/2-1/2}),
$(\mu_{y_0}+\mu_{y_1}+\mu_{y_2}-\mu_{x})/2,$
does indeed lead to the infimum of~(\ref{x.mrNmV}),
showing that $\mr(X,\fb{NmV})=5/2,$ and not a smaller value.

Given an element $\mu\in U_0,$ it would be interesting to look for
bounds on the number of distinct graphs $\Gamma(\nu)$
corresponding to elements $\nu\in W_+$ as in the first sentence of
Corollary~\ref{C.match}.
(This is simply a function of the coefficients occurring
in $\mu,$ as a family of positive real numbers with multiplicities.)
To start with, one might look for bounds in terms of the
cardinalities of the positive and negative supports of~$\mu.$
\vspace{6pt}

Weaver~\cite{NW} also gets a description of the universal nonexpanding
map of $X$ into a normed {\em complex} vector space, paralleling
the description for the real case, but he
notes~\cite[p.43, next-to-last paragraph of \S2.2]{NW} that in the
complex case it is no longer true that the infimum
corresponding to~(\ref{x.univnorm}) is always attained by a $\nu$
having the same support as $\mu.$
(In the complex version of~(\ref{x.univnorm}), by the way, one must
replace $d\nu$ by $|d\nu|,$ instead of restricting $\nu$ to a
``positive cone''
as above, since there is no natural analog of that cone.
Weaver takes this approach for both the real and complex cases;
my use of $W_+$ for the real case is one of the different
technical choices that I have made.)
For instance, if $X=\{x,y_0,y_1,y_2\}$ with
$d(x,y_i)=1$ and $d(y_i,y_j)=2$ $(i\neq j),$ and
if $\mu=\mu_{y_0}+\omega\<\mu_{y_1}+\omega^2\mu_{y_2},$ where $\omega$
is a primitive cube root of unity, then the minimizing $\nu$ is
$\nu_{y_0,x}+\omega\<\nu_{y_1,x}+\omega^2\nu_{y_2,x},$ which makes
that integral $3,$ while the best $\nu$ having support in the
support of $\mu,$ $\{y_0,y_1,y_2\},$ is
$\frac{1}{3}(1-\omega)\nu_{y_0,y_1}+
\frac{1}{3}(\omega-\omega^2)\nu_{y_1,y_2}+
\frac{1}{3}(\omega^2-1)\nu_{y_2,y_0},$
of which each term contributes $\frac{1}{3}\cdot\sqrt{3}\cdot 2$
to that integral, giving a total of $\sqrt{3}\cdot 2=\sqrt{12}>3.$
If in this space we replace the point $x$ by a sequence of
points $x_1,x_2,\dots,$ such that $d(x_m,y_i)=1+1/m$ and
$d(x_m,x_n)=|1/m-1/n|,$ the above $\mu$ still has
$||\mu||=3,$ but the infimum defining that norm is not achieved.

\section{Appendix: Translating convex sets to $0.$}\label{S.park}
In \S\ref{S.discr}, we saw that the radius of a subset $X$ of a
normed vector space $V$ could be larger when measured within a convex
subset $C$ of $V$ than within the whole space $V.$
If we regard this as a pathology, we would like to know in
which $V$ it does not occur.
We shall obtain partial results below, which, we will see, make
it likely that for $n>2,$ the only norms on $\R^n$ for which it
does not happen are those giving a structure isomorphic to $\E^n.$

Observe that the radius of $X,$ whether within $V$ or within a convex
subset $C,$ is determined by the set of closed balls containing $X,$
and that these are all convex; hence that
radius is a function of the convex hull of $X.$
So our question reduces to the case where $X$ is convex.
Moreover, if $X$ shows the above behavior with respect to one convex
subset $C$ of $V,$ it will show it with respect to any smaller convex
subset in which it lies; these two observations reduce
our question to the case where $C=X.$
This reduction is the equivalence of conditions~(\ref{x.rad_VX=rad_CX})
and~(\ref{x.rad_V=rad_C}) of the next lemma.
Condition~(\ref{x.bring_to_0}) then reformulates the problem.

(Note that in~(\ref{x.bring_to_0}) and similar
statements throughout this section, an expression such as
``$\!C-v\!$'' will denote the translate of the set $C$ by the vector
$-v,$ in contrast to notations such as $X-\{x\}$ for set-theoretic
difference, used occasionally in earlier sections.)

\begin{lemma}\label{L.external}
If $V$ is a locally compact normed vector space,
with closed unit ball $B,$
then the following conditions are equivalent:
\begin{xlist}\item\label{x.rad_VX=rad_CX}
For every nonempty subset $X$ of $V,$ and convex subset
$C$ of $V$ containing $X,$ one has $\rad_V(X)=\rad_C(X).$
\end{xlist}
\begin{xlist}\item\label{x.rad_V=rad_C}
For every nonempty
convex subset $C$ of $V,$ one has $\rad_V(C)=\rad_C(C).$
\end{xlist}
\begin{xlist}\item\label{x.bring_to_0}
Every nonempty closed convex subset $C$ of $B$ has a translate $C-v$
which contains $0$ and is again contained in $B.$
\end{xlist}
\end{lemma}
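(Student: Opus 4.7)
The plan is to prove the chain of equivalences as $(1)\Leftrightarrow(2)$ and $(2)\Leftrightarrow(3)$. The implication $(1)\Rightarrow(2)$ is immediate, by specializing to $X=C$. For $(2)\Rightarrow(1)$, given $X\subseteq C$ with $C$ convex, I would set $D=\mathrm{conv}(X)\subseteq C$; the standard estimate $\|\sum a_i(x_i-y)\|\leq\max_i\|x_i-y\|$ shows that $\sup_{d\in D}d(d,y)=\sup_{x\in X}d(x,y)$ for every $y\in V$, so $\rad_V(X)=\rad_V(D)$ and $\rad_C(X)=\rad_C(D)$. Since $D\subseteq C\subseteq V$ gives the chain $\rad_V(D)\leq\rad_C(D)\leq\rad_D(D)$, and (2) applied to $D$ identifies the two outer terms, the whole chain collapses and (1) follows.

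For $(2)\Rightarrow(3)$, let $C$ be a nonempty closed convex subset of $B$. Local compactness forces $V$ to be finite-dimensional (by Riesz's lemma), so $B$ is compact and hence so is $C$. The continuous function $y\mapsto\sup_{c\in C}d(c,y)$ on the compact set $C$ therefore attains its infimum at some $y_\ast\in C$, and since $C\subseteq B$ forces $\rad_V(C)\leq 1$, condition (2) gives $\sup_{c\in C}d(c,y_\ast)=\rad_C(C)=\rad_V(C)\leq 1$. Hence $C-y_\ast\subseteq B$ while $0\in C-y_\ast$, so $v=y_\ast$ works.

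For $(3)\Rightarrow(2)$, the inequality $\rad_V(C)\leq\rad_C(C)$ is automatic, so I only need the reverse, and I may assume $r:=\rad_V(C)\in(0,\infty)$ (the extremes being trivial). Given $\epsilon>0$, I would pick $y_0\in V$ with $C\subseteq B(y_0,r+\epsilon)$, form the closed convex set $C':=\overline{(C-y_0)/(r+\epsilon)}\subseteq B$, and apply (3) to obtain some $v\in C'$ with $C'\subseteq v+B$. Untranslating and unscaling, the point $y_\ast:=y_0+(r+\epsilon)v$ lies in $\overline{C}$ and satisfies $\overline{C}\subseteq B(y_\ast,r+\epsilon)$, so $\rad_{\overline{C}}(\overline{C})\leq r+\epsilon$. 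A short density argument (approximating any center in $\overline{C}$ by centers in $C$, using continuity of $y\mapsto\sup_{c\in C}d(c,y)$) gives $\rad_C(C)=\rad_{\overline{C}}(\overline{C})$ for convex $C$, and letting $\epsilon\downarrow 0$ finishes.

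The main obstacle I expect is precisely this last step: the center furnished by (3) naturally lives in $\overline{C}$ rather than $C$ itself, so the closure issue cannot be brushed aside, and I need the auxiliary equality $\rad_C(C)=\rad_{\overline{C}}(\overline{C})$ to transfer the bound from $\overline{C}$ back to $C$. Everything else (the convex-hull reduction, the use of compactness to realize the optimal center exactly in $(2)\Rightarrow(3)$, and the rescaling back and forth in $(3)\Rightarrow(2)$) is routine once this closure issue is handled cleanly.
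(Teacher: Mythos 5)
Your proposal is correct, and its overall skeleton matches the paper's: the equivalence of the first two conditions via the convex hull (the radius of $X$ with respect to any ambient set depends only on $\mathrm{conv}(X),$ and one then sandwiches $\rad_V\leq\rad_C\leq\rad_{\mathrm{conv}(X)}$), and the equivalence of the last two via rescaling the unit ball together with compactness of closed bounded sets (the paper does not name Riesz's theorem, but uses the same fact). The one point where you genuinely diverge is the passage from closed to arbitrary convex $C$ in proving that condition three implies condition two. The paper handles this by writing the radius of an arbitrary set as the supremum of the radii of its finite subsets and noting that the convex hull of a finite set is compact, hence closed --- a step that tacitly leans on a Helly-type compactness fact about when a family of balls has a common center. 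You instead prove $\rad_C(C)=\rad_{\overline{C}}(\overline{C})$ directly, using that $y\mapsto\sup_{c\in C}d(c,y)$ is $\!1\!$-Lipschitz and unchanged on passing from $C$ to $\overline{C},$ so that centers in $\overline{C}$ can be approximated by centers in $C.$ Your route is somewhat more self-contained and makes explicit the $\varepsilon$-bookkeeping (the infimum $\rad_V(C)$ need not be attained, so one must work with radius $r+\varepsilon$ balls) that the paper compresses into the phrase ``contained in some closed ball.'' Both arguments are sound; yours trades the finite-subset reduction for an elementary density argument, which is a reasonable exchange.
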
\begin{proof}
We have noted the equivalence of~(\ref{x.rad_VX=rad_CX})
and~(\ref{x.rad_V=rad_C}); let us prove~(\ref{x.rad_V=rad_C})
equivalent to~(\ref{x.bring_to_0}).

(\ref{x.bring_to_0})$\Rightarrow$(\ref{x.rad_V=rad_C}):
Dilating by arbitrary constants, we see
that if~(\ref{x.bring_to_0}) holds for $B,$ then
it holds for $rB$ for all positive real numbers $r.$
Moreover, the statement that $C-v$ contains $0$ and is contained in $rB$
is equivalent to saying that $v\in C$ and that $v+rB$ contains $C;$
i.e., that $C$ is contained in the ball of radius $r$ about $v\in C.$
Thus~(\ref{x.bring_to_0}) says that if a closed convex set $C$ is
contained in some closed ball about some point of $V$
(taken without loss of generality to be $0),$ then it is contained
in a ball of the same radius about one of its own points.
This yields the case of~(\ref{x.rad_V=rad_C}) where $C$ is closed.
The facts that the convex hull of a finite subset of $V$ is compact,
hence closed, and that the radius of an arbitrary set is the supremum
of the radii of its finite subsets, allow us to deduce the general
case of~(\ref{x.rad_V=rad_C}) from the case of closed~$C.$

(\ref{x.rad_V=rad_C})$\Rightarrow$(\ref{x.bring_to_0}):
If $C$ is a closed convex subset of $V$ contained in $B,$ then
$\rad_V(C)\leq 1,$ so by~(\ref{x.rad_V=rad_C}), $\rad_C(C)\leq 1.$
Moreover, compactness of $C$ implies that
the set of radii of closed balls containing $C$ and centered
at points $v\in C$ achieves this minimum $\rad_C(C)\leq 1,$
so that $C$ is contained in a translate $v+B$ $(v\in C),$
i.e., $C-v\subseteq B.$
\end{proof}

Now~(\ref{x.bring_to_0}) is a statement purely about
the convex set $B$ in the topological vector
space $V,$ so our question becomes that of which subsets $B$
of a topological vector space $V$ satisfy it.
(In the statement of the lemma, the topology and the
set $B$ both arise from the normed structure on $V;$ but that
relation is not needed by the statement of~(\ref{x.bring_to_0}) alone.)
Here are some pieces of language, one ad hoc, the rest more or less
familiar, that we will use in examining this question.

\begin{definition}\label{D.park}
If $C\subseteq B$ are convex subsets of a real vector space $V,$
with $0\in B,$ we shall call $C$ {\em parkable} in $B$ if there
exists $v\in C$ such that $C-v\subseteq B.$
When clear from context, ``in $B\!$'' may be omitted.

If $V$ is a real topological vector space, we will call sets
of the form $\{x\in V\mid L(x)=a\},$ where $L$ is a nonzero continuous
linear functional on $V$ and $a\in\R,$ {\em hyperplanes,} while sets
of the form $\{x\in V\mid L(x)\geq a\}$ will be called
{\em closed half-spaces.}

A subset $S$ of a vector space $V$ will be called
{\em centrally symmetric} if $S=-S.$
A {\em center of symmetry} of a subset $S$ of $V$ will mean a point
$v\in V$ such that $S-v$ is centrally symmetric;
equivalently, such that $S=2v-S.$
\textup{(}Note that a center of symmetry of a nonempty
convex set belongs to that set.\textup{)}
\end{definition}

\begin{lemma}\label{L.park}
Let $B$ be a compact convex subset of $\R^n$ containing $0.$
Then the following conditions are equivalent:
\begin{xlist}\item\label{x.parkAB}
The intersection of $B$ with every hyperplane $A$ that
meets $B$ is parkable.
\end{xlist}
\begin{xlist}\item\label{x.parkHB}
The intersection of $B$ with every closed half-space
$H$ that meets $B$ is parkable.
\end{xlist}
\begin{xlist}\item\label{x.parkC}
Every nonempty closed convex subset $C$ of $B$ is parkable
$(=$\textup{(\ref{x.bring_to_0})} above\textup{)}.
\end{xlist}
\end{lemma}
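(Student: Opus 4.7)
The plan is to establish the cycle of implications $(\ref{x.parkC}) \Rightarrow (\ref{x.parkHB}) \Rightarrow (\ref{x.parkAB}) \Rightarrow (\ref{x.parkC})$. The first step is immediate: whenever a closed half-space $H$ meets $B$, the set $B \cap H$ is a nonempty closed convex subset of $B$, so its parkability follows from $(\ref{x.parkC})$.

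For $(\ref{x.parkHB}) \Rightarrow (\ref{x.parkAB})$, given a hyperplane $A = \{L = a\}$ meeting $B$, both closed half-spaces $H^+ = \{L \geq a\}$ and $H^- = \{L \leq a\}$ bounded by $A$ meet $B$, so $(\ref{x.parkHB})$ provides parking witnesses $v^+ \in B \cap H^+$ and $v^- \in B \cap H^-$ of $B \cap H^+$ and $B \cap H^-$ respectively. The segment from $v^+$ to $v^-$ lies in $B$ by convexity and crosses $A$ at a point $v_0 = (1-t)v^+ + tv^-$. For any $x \in B \cap A$, one has $x \in B \cap H^+ \cap B \cap H^-$, so $x - v^+$ and $x - v^-$ both lie in $B$, and hence $x - v_0 = (1-t)(x - v^+) + t(x - v^-) \in B$. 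Thus $v_0 \in B \cap A$ parks $B \cap A$.

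The main direction, $(\ref{x.parkAB}) \Rightarrow (\ref{x.parkC})$, I would prove by applying Helly's theorem to the family $\mathcal{F} = \{C\} \cup \{c - B : c \in C\}$ of compact convex subsets of $\R^n$; a common point of $\mathcal{F}$ is precisely a parking witness $v \in C$ for $C$. Helly together with the finite intersection property reduces the task to showing that every $(n{+}1)$-element subfamily has a common point. Subfamilies omitting $C$ have $0$ as a common point (since each $c \in B$), so the essential case is that of $C$ together with $n$ translates $c_1 - B, \dots, c_n - B$, where one needs $v \in C$ with $c_i - v \in B$ for each $i$. The convex hull $K = \mathrm{conv}(c_1, \dots, c_n)$ has dimension at most $n-1$ and so lies in some hyperplane $A$; hypothesis $(\ref{x.parkAB})$ then produces an \emph{external} witness $w \in B \cap A$ with $(B \cap A) - w \subseteq B$, so in particular $c_i - w \in B$ for all $i$.

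The main obstacle is converting this external witness $w$ into an internal one $v \in C$. For $n = 2$, a direct case analysis works: $A$ is a line, $K = [c_1, c_2] \subseteq C \cap A$, and parametrizing the chord $B \cap A$ along the direction $c_2 - c_1$ shows that either $v = w$ already lies in $[c_1, c_2]$, or else the parking relation $(B \cap A) - w \subseteq B$ combined with the convexity of $B$ through the origin forces $\pm (c_2 - c_1) \in B$, so that $v = c_1$ or $v = c_2$ works. For general $n$, I would proceed by induction on $n$, combining the Helly reduction with a higher-codimension averaging argument generalizing the proof of $(\ref{x.parkHB}) \Rightarrow (\ref{x.parkAB})$: parking witnesses in several hyperplanes $\tilde{A}_j$ through a common codimension-$k$ flat $A'$ can be convex-combined so as to yield a parking witness in $B \cap A'$, enabling a nested Helly step inside the hyperplane $A \cong \R^{n-1}$ that relocates $w$ into $K \subseteq C$ while preserving the parking property. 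The technical heart of the proof is organizing this joint averaging-and-Helly induction so that the witness produced at each level lies inside $C$ rather than merely in the ambient hyperplane.
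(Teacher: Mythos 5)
Your first two implications are fine: $(\ref{x.parkC})\Rightarrow(\ref{x.parkHB})$ is immediate, and your averaging argument for $(\ref{x.parkHB})\Rightarrow(\ref{x.parkAB})$ -- taking parking witnesses $v^+,v^-$ for the two half-space intersections and passing to the point where the segment $[v^+,v^-]$ crosses $A$ -- is correct and clean. (Note that you have oriented the cycle oppositely to the paper, which proves $(\ref{x.parkC})\Rightarrow(\ref{x.parkAB})\Rightarrow(\ref{x.parkHB})\Rightarrow(\ref{x.parkC})$; as a consequence the load-bearing implication in your scheme is $(\ref{x.parkAB})\Rightarrow(\ref{x.parkC})$ rather than $(\ref{x.parkHB})\Rightarrow(\ref{x.parkC})$.)

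That load-bearing implication is where the proof has a genuine gap, and you have in effect flagged it yourself. The Helly reduction is a sensible idea: a parking witness for $C$ is a common point of $\{C\}\cup\{c-B\mid c\in C\}$, and the only nontrivial $(n{+}1)$-subfamilies are $C$ together with $n$ translates $c_i-B$. But hypothesis $(\ref{x.parkAB})$ only hands you a witness $w\in B\cap A$ with $(B\cap A)-w\subseteq B$, and there is no reason for $w$ to lie in $C$ (or in $\mathrm{conv}(c_1,\dots,c_n)$); the ``relocation'' of $w$ into $C$ is exactly the content of the implication, and it is not supplied. Your $n=2$ case analysis can be pushed through (if $w$ lies beyond $c_2$ on the line $A$, then $c_1-c_2$ is a scalar multiple in $[0,1]$ of $c_1-w\in B$, hence lies in $B$ since $0\in B$, so $v=c_2$ works), but the general-$n$ plan -- ``joint averaging-and-Helly induction'' over flats of higher codimension -- is a research program, not an argument; in particular the natural induction does not close up, because what one needs in the hyperplane $A$ is parkability of point-sets in $B\cap A$ relative to the \emph{parallel} section $B\cap(A-A)$ through $0$, which is a different convex body. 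The paper sidesteps all of this: it proves $(\ref{x.parkHB})\Rightarrow(\ref{x.parkC})$ by a compactness/variational argument -- among translates of $C$ inside $B$ choose one minimizing Euclidean distance to $0$, let $p$ be its nearest point to $0$ and $H$ the supporting half-space at $p$ away from $0$; a parking witness $v$ for $H\cap B$ has positive component along $p$, so translating by a small multiple of $-v$ keeps $C$ in $B$ while moving it strictly closer to $0$, a contradiction. That positivity is exactly what a half-space witness provides and a hyperplane witness does not, which is why the paper's orientation of the cycle makes the hard step tractable. I would recommend either adopting that argument for the final implication, or first proving $(\ref{x.parkAB})\Rightarrow(\ref{x.parkHB})$ (as the paper does, by a two-dimensional computation) and then running the variational argument from $(\ref{x.parkHB})$.
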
\begin{proof}
(\ref{x.parkC})$\Rightarrow$(\ref{x.parkAB}) is clear; we will show
(\ref{x.parkAB})$\Rightarrow$(\ref{x.parkHB})$\Rightarrow$%
(\ref{x.parkC}).

(\ref{x.parkAB})$\Rightarrow$(\ref{x.parkHB}):
Let $H$ be a closed half-space in $\R^n,$ bounded by a hyperplane $A,$
and meeting $B.$
If $H\cap B$ contains $0$ it is trivially parkable, so assume the
contrary.
Thus $B$ meets both $H$ and its complement, hence it meets their
common boundary $A,$ so by~(\ref{x.parkAB}) there exists
$v\in A\cap B$ such that $(A\cap B)-v\subseteq B.$
I claim that $(H\cap B)-v$ is also contained in $B.$
Indeed, let $p\in H\cap B;$ we wish to show $p-v\in B.$
Intersecting our sets with the subspace of $V$ spanned by $p$ and $v,$
and taking appropriate coordinates, we may assume that
$n=2,$ that $A$ is the line $\{(x,y)\mid y=1\}\subseteq\R^2,$ and
that $v$ is the point $(0,1).$
$H$ will be the closed half-plane $\{(x,y)\mid y\geq 1\},$
so we can write $p=(x_p,y_p)$ with $y_p \geq 1.$

In this situation, $A\cap B$ will be a line segment
(possibly degenerate) extending from a
point $(s,1)$ to a point $(t,1)$ $(s\leq t).$
Since $(A\cap B)-v\subseteq B,$ $B$ also contains
the segment from $(s,0)$ to $(t,0).$
Note that if $x_p$ were $>t,$ then the point where
the line segment from $p=(x_p,y_p)\in B$ to $(t,0)\in B$ meets
$A$ would have $\!x\!$-coordinate $>t,$ contradicting the assumption
that $A\cap B$ terminates on the right at $(t,1);$ so $x_p\leq t.$
Similarly, $x_p\geq s.$
Thus, $x_p\in [s,t],$ so $(x_p,0)\in B.$
Hence $p-v=(x_p,\,y_p{-}1)$ lies on the line segment connecting
$p=(x_p,y_p)\in B$ with $(x_p,0)\in B,$ hence lies in $B,$ as claimed.

(\ref{x.parkHB})$\Rightarrow$(\ref{x.parkC}):
Suppose $C$ is a nonempty closed convex subset of $B$ which
is not parkable.
By compactness of $B,$ among the translates of
$C$ contained in $B$ there is (at least) one that minimizes its
distance to $0$ in the Euclidean norm on $\R^n;$
let us assume $C$ itself has this property.
Let $p$ be the point of $C$ nearest to $0$ in that norm, and let
$A$ be the hyperplane passing through $p$ and perpendicular (again
in the Euclidean norm) to $p$ regarded as a vector.
Then $C$ will lie wholly in the half-space
$H$ bounded by $A$ and not containing~$0.$
(For if we had $q\in C$ not lying in $H,$
then points close to $p$ on the line segment
from $p$ to $q$ would be nearer to $0$ than $p$ is.)
Assuming~(\ref{x.parkHB}), $H\cap B$ is parkable;
say $v\in H\cap B$ with $(H\cap B)-v\subseteq B.$
Since $v\in H,$ if we write $v$ as the sum $a\<p+q$ of a scalar
multiple of $p$ and a vector $q$ perpendicular to $p,$ the
coefficient $a$ will be $\geq 1,$ and so in particular, positive.
It follows that for sufficiently small positive $c,$ the point $p-c\<v$
will be closer to $0$ than $p$ is; moreover, if we take such
a $c$ that is $\leq 1,$ $(H\cap B)-c\<v$ will still be contained in $B,$
since $H\cap B$ and $(H\cap B)-v$ are.
Hence $C-c\<v$ is contained in $B,$ and has a point $p-c\<v$ which is
closer to $0$ than $p$ is, contradicting our minimality assumption
on $C$ and~$p.$
\end{proof}

(In the above result, we could have replaced $\R^n$ by any
real Hilbert space.)

Clearly, the closed Euclidean unit ball in $\R^n$
satisfies~(\ref{x.parkAB}), and hence~(\ref{x.parkHB})
and~(\ref{x.parkC}); hence since those conditions are preserved by
invertible linear transformations, so does the closed
region enclosed by any ellipsoid centered at $0.$
On the other hand, our example in the paragraph
containing~(\ref{x.011}), of a normed vector space in
which~(\ref{x.rad_VX=rad_CX}) failed, had for its unit ball $B$ a
$\!3\!$-cube centered at $0,$ showing that our properties
fail for that $B.$
To see geometrically the failure of~(\ref{x.parkAB}) for that
$B,$ choose a vertex of that cube and pass a
plane $A$ through the three vertices adjacent thereto;
it is not hard to see that $A\cap B$ is not parkable.
One can similarly show that none of the regular polyhedra centered
at $0$ satisfy~(\ref{x.parkAB}),
nor a circular cylinder centered at $0,$
nor the solid obtained by attaching a hemisphere to the
top and bottom of such a cylinder.
In fact, for $n>2,$ I know of no compact convex subset of $\R^n$ with
nonempty interior that does satisfy that condition, other than the
regions enclosed by ellipsoids centered at $0.$
The situation is different for $n=2,$ as shown by
point~(d) of the next result.

\begin{lemma}\label{L.csym}
Suppose $B$ is a {\em centrally symmetric} convex subset of $\R^n.$
Then\\[3pt]
\textup{(a)}~ Any nonempty convex subset $C\subseteq B$ that has a
center of symmetry is parkable in $B.$\vspace{3pt}

Hence, assuming in the remaining points that $B$ is also {\em compact,}
we have\\[3pt]
\textup{(b)}~ If the intersection of $B$ with every hyperplane $A$ that
meets $B$ has a center of symmetry, then $B$ satisfies the equivalent
conditions~\textup{(\ref{x.parkAB})-(\ref{x.parkC})}.\vspace{3pt}

In particular,\\[3pt]
\textup{(c)}~ If $B$ is the closed region enclosed by an
ellipsoid in $\R^n,$ then $B$
satisfies~\textup{(\ref{x.parkAB})-(\ref{x.parkC})}, and\\[3pt]
\textup{(d)}~ If $n=2,$ then without further restrictions, $B$
satisfies~\textup{(\ref{x.parkAB})-(\ref{x.parkC})}.
\end{lemma}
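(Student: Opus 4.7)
The plan is to prove (a) directly from the central symmetries of $B$ and $C$, and then derive (b), (c), (d) as easy consequences, using Lemma~\ref{L.park} at the end.

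For (a), let $v\in C$ be a center of symmetry of $C$ (recall such a $v$ necessarily lies in $C$). We want to show $C-v\subseteq B$; equivalently, $p-v\in B$ for every $p\in C$. The key trick is to write
\[
p-v\ =\ \tfrac{1}{2}\,p\ +\ \tfrac{1}{2}\,(p-2v).
\]
Here $p\in C\subseteq B$, while $2v-p\in C\subseteq B$ (by central symmetry of $C$ about $v$), hence $p-2v=-(2v-p)\in B$ by central symmetry of $B$. So $p-v$ is the midpoint of two points of $B$, and thus lies in $B$ by convexity. This proves~(a).

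For (b), suppose every hyperplane slice $A\cap B$ of the compact centrally symmetric convex set $B$ has a center of symmetry. Then, since a nonempty convex set's center of symmetry necessarily belongs to the set, part~(a) applied to $C=A\cap B$ inside $B$ shows $A\cap B$ is parkable in $B$. This is exactly condition~(\ref{x.parkAB}) of Lemma~\ref{L.park}, which gives the equivalent conditions (\ref{x.parkHB}) and~(\ref{x.parkC}).

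Parts~(c) and~(d) are immediate from~(b). For~(c), the intersection of an ellipsoidal region in $\R^n$ with a hyperplane $A$ is either empty, a single point, or the closed region enclosed by an ellipsoid in~$A$; in every nonempty case it is centrally symmetric about its center, so~(b) applies. For~(d), when $n=2$ a hyperplane is a line, and $A\cap B$ is then either empty, a point, or a line segment, each of which trivially has a center of symmetry (the midpoint).

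The only genuinely substantive step is the identity displayed in the proof of~(a); everything else is bookkeeping. I don't expect any real obstacle, so long as one remembers that central symmetry of $B$ about $0$ lets one pass from $2v-p\in B$ to $p-2v\in B$, which is exactly what makes the midpoint expression for $p-v$ work.
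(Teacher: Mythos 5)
Your proposal is correct and follows essentially the same route as the paper: the midpoint identity $p-v=\tfrac12 p+\tfrac12(p-2v)$ is exactly the paper's ``averaging $x$ and $x-2z$'' step, and parts (b)--(d) are deduced in the same way via Lemma~\ref{L.park}. No gaps.
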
\begin{proof}
Let $C$ be as in~(a), with center of symmetry $z\in C.$
Then for every $x\in C,$ $2z-x\in C\subseteq B,$
so by central symmetry of $B,$ we have $x-2z\in B.$
Averaging $x$ and $x-2z,$ we get $x-z\in B.$
Thus $C-z\subseteq B,$ so $C$ is parkable.

It follows that any $B$ as in~(b)
satisfies~(\ref{x.parkAB}), hence by~Lemma~\ref{L.park},
all of (\ref{x.parkAB})-(\ref{x.parkC}).

In the situation of~(c), the intersection of $B$ with a
hyperplane $A,$ if nonempty, is either a point or the region
enclosed by an ellipsoid in $A,$ hence has a center of symmetry,
while in~(d) the intersection of $B$ with every line in
$\R^2$ that meets $B$ is a point or a closed line segment, hence has a
center of symmetry; so in each case,~(b) gives the asserted conclusion.
\end{proof}

\begin{question}\label{Q.park}
Let $n\geq 3,$ and suppose $B$ is a compact convex subset
of $\R^n$ having nonempty interior and containing $0.$
Of the implications \textup{(i)$\Rightarrow$(ii)$\Rightarrow$(iii)},
which we have noted hold among the conditions listed below,
is either or both reversible?\\[6pt]
\textup{(i)} $B$ is an ellipsoid centered at $0.$\\[6pt]
\textup{(ii)} $B$ is centrally symmetric, and for every hyperplane
$A$ meeting $B,$ $A\cap B$ has a center of symmetry.\\[6pt]
\textup{(iii)} ~Every closed convex subset of $B$ is parkable in $B.$
\end{question}

Branko Gr\"unbaum has pointed out to me a similarity between this
question and the result of W.\,Blaschke \cite[pp.157--159]{B} that if
$E$ is a smooth compact convex surface in $\R^3$ with everywhere
nonzero Gaussian curvature, such that when $E$
is illuminated by parallel rays from any direction, the boundary
curve of the bright side lies in a plane, then $E$ is an ellipsoid.
I believe that methods similar to Blaschke's may indeed show that
both implications of Question~\ref{Q.park} are reversible.
To see why, suppose $B$ is a compact convex subset of
$\R^3$ with nonempty interior containing $0,$
which satisfies~(iii) above, and whose boundary $E$ is (as in Blaschke's
result) a smooth surface with everywhere nonzero Gaussian curvature.
Let $A$ be any plane through $0,$ and $A'$ the plane gotten by
shifting $A$ a small distance.
Now the vectors that can possibly park $A'\cap B$ are
constrained by the directions of the tangent planes to $E$ at the points
of $A'\cap E$ (which are well-defined because $E$ is assumed smooth),
and if we take $A'$ sufficiently close to $A,$ these tangent planes
become close to the corresponding
tangent planes at the points of $A\cap E.$
Applying the above observations to planes $A'$ on both sides of
$A,$ one can deduce that all the tangent planes
to $E$ along $A\cap E$ must contain vectors in some common
direction (I am grateful to Bjorn Poonen for this precise
formulation of a rough idea I showed him);
in other words, that $A\cap E$ is the boundary of the
bright side when $E$ is illuminated by parallel
rays from that direction.
By definition, $A\cap E$ lies in the plane $A;$ so we have
the situation that Blaschke considered, except that we have started with
planarity and concluded that the curve is a boundary of illumination,
rather than vice versa.

That last difference is probably not too hard to overcome.
More serious is the smoothness assumption on $E\<,$ used in
both the above discussion and Blaschke's argument.
Finally, can the result be pushed from $n=3$ to arbitrary $n\geq 3$?
I leave it to those more skilled than I in the subject to see whether
these ideas can indeed be turned into a proof that
(iii)$\!\Rightarrow\!$(i) in Question~\ref{Q.park}.

A related argument which can be extracted from a step in Blaschke's
development shows that a compact convex subset $B$ of $\R^2$
containing $0$ and satisfying (\ref{x.parkC}), whose boundary is a
smooth curve containing no line segments, must be centrally symmetric.
Again, one would hope to remove the conditions on the boundary.

One can ask about a converse to another of our observations:
\begin{question}\label{Q.park_C}
Suppose $C$ is a compact convex subset of $\R^n$ $(n>2)$ such that
for every centrally symmetric compact convex subset $B$ of $\R^n$
containing a translate $C'$ of $C,$ the set $C'$ is parkable in $B.$
Must $C$ have a center of symmetry?
\end{question}

Here the behavior of a given $C$ can change depending on
whether the dimension of the ambient vector space is $2$ or
-- as in the above question -- larger: a triangle $C$ has the above
property in $\R^2$ by Lemma~\ref{L.csym}(d), but not in $\R^3,$ as
we saw in the example where $B$ was a cube.\vspace{6pt}

Returning to the ``pathology'' which motivated the considerations of
this section, one important case is where the radius of a
subset $X$ of a normed vector space $V$ decreases
when $V$ is embedded in a larger normed vector space $W.$
The next lemma determines how far down the radius of a given $X$ can go.

\begin{lemma}\label{L.enlarge}
Let $V$ be a normed vector space, and $X$ a bounded subset of $V.$
Then
\begin{xlist}\item\label{x.enlarge}
$\inf_{W\supseteq V}\ \rad_W(X)\ =\ \rad_V \{(x-y)/2\mid x,\,y\in X\},$
\end{xlist}
where $W$ ranges over all normed vector spaces containing $V.$
This infimum is realized by a $W$ in which $V$ has codimension $1.$
\end{lemma}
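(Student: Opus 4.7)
The plan is to prove the equality by establishing each inequality separately. First, I will observe that the right-hand side $\rad_V\{(x-y)/2:x,y\in X\}$ coincides with $\diam(X)/2$: the set $D=\{(x-y)/2\}$ is centrally symmetric about $0$, and for any such set the pointwise estimate $\|z-v\|+\|z+v\|\geq\|2z\|=2\|z\|$ yields $\sup_{z\in D}\|z-v\|\geq\sup_{z\in D}\|z\|$ for every $v$, so the infimum defining $\rad_V(D)$ is realized at $v=0$. The lower bound $\rad_W(X)\geq\diam(X)/2$ for every $W\supseteq V$ is then immediate from the triangle inequality in $W$: $\|x-y\|\leq\|x-w\|_W+\|y-w\|_W\leq 2\sup_{z\in X}\|z-w\|_W$.

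For the reverse inequality, I will construct an explicit $W\supseteq V$ of codimension $1$ realizing the infimum, thereby establishing both remaining assertions of the lemma at once. Setting $r=\diam(X)/2$ (the case $r=0$ being trivial), take $W=V\oplus\R e$ and let $K$ be the algebraic convex hull
\[K\ :=\ \mathrm{conv}\bigl(B_V\,\cup\,\{\pm(x-e)/r:x\in X\}\bigr).\]
The plan is to show that the Minkowski functional of $K$ is a norm on $W$ whose restriction to $V$ equals $\|\cdot\|_V$; once that is done, the inclusions $\pm(x-e)/r\in K$ give $\|x-e\|_W\leq r$ for every $x\in X$, so $e$ witnesses $\rad_W(X)\leq r$. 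Boundedness of $K$ (needed so the Minkowski functional is a norm and not a mere seminorm) follows from the boundedness of $X$ in $V$; absorbance and central symmetry of $K$ are immediate.

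The main obstacle will be the verification $K\cap V=B_V$, with the nontrivial inclusion being $K\cap V\subseteq B_V$. A typical element of $K$ has the form $\alpha b+\sum_i\beta_i(x_i-e)/r+\sum_j\gamma_j(e-y_j)/r$ with $b\in B_V$, $x_i,y_j\in X$, and nonnegative coefficients summing to $1$; vanishing of the $e$-component forces $\sum\beta_i=\sum\gamma_j=:s$, after which the element can be rewritten as $\alpha b+(s/r)(\bar x-\bar y)$ with $\bar x,\bar y$ convex combinations of points of $X$ and $\alpha+2s=1$. The key estimate is $\|\bar x-\bar y\|_V\leq\diam(X)=2r$, obtained by writing $\bar x-\bar y=\sum_{i,j}(\beta_i\gamma_j/s^2)(x_i-y_j)$ (using $\sum_j\gamma_j/s=\sum_i\beta_i/s=1$) and applying the triangle inequality term by term; the resulting $V$-norm is at most $\alpha+2s=1$, as needed.
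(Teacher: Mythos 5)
Your proof is correct and follows essentially the same route as the paper: the lower bound via central symmetry of $\{(x-y)/2\}$, and for the upper bound the adjunction of one dimension with unit ball the convex hull of $B_V$ together with $\pm(X-e)/r$, verified to meet $V$ only in $B_V$ by the same $\sum\beta_i=\sum\gamma_j$ bookkeeping and the estimate $\|\bar x-\bar y\|\le\diam(X)$. The only cosmetic differences are that you phrase the right-hand side as $\diam(X)/2$ from the outset and work with the Minkowski functional of the algebraic convex hull, where the paper declares the closure of that hull to be the unit ball.
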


\begin{proof}
First consider any normed vector space $W$ containing $V,$ and
suppose $X$ is contained in the closed ball of radius $r$
about $w\in W.$
That ball has $w$ as a center of symmetry, so it also
contains $\{2w-y\mid y\in X\},$ hence taking midpoints of
segments connecting that set to points $x\in X,$ it contains
$\{w+(x-y)/2\mid x,y\in X\}.$
Translating by $-w,$ we see that the ball of radius $r$
about $0$ contains $\{(x-y)/2\mid x,y\in X\},$
so $r$ is at least the right-hand side of~(\ref{x.enlarge}).
This gives the inequality ``$\geq$'' in~(\ref{x.enlarge});
it remains to construct a $W$ for which
$\rad_W(X)$ equals that right-hand side.

Before doing this, note that~(\ref{x.enlarge})
holds trivially if $X$ is empty or a singleton; so assuming
it is neither, let us re-scale and assume without loss of generality
that the right-hand side of~(\ref{x.enlarge}) equals $1.$
Since the set whose radius is taken
there is centrally symmetric, that set is contained in the
closed unit ball $B_V$ of $V.$
(Cf.\ the proof of Lemma~\ref{L.csym}(a), which works not just
for $\R^n,$ but for any normed vector space with $B$ its closed unit
ball; or the proof of Lemma~\ref{L.sym}, applied to the $\!2\!$-element
group generated by $x\mapsto -x.)$
Now let $W=V\oplus\R,$ let us identify $V$ with
$V\times\{0\}\subseteq W,$ and let us take for the closed unit ball
$B_W$ of $W$ the closure of the convex hull of
\begin{xlist}\item\label{x.3parts}
$\{(x,1)\mid x\in X\}\ \cup\ B_V\ \cup \ \{(-x,-1)\mid x\in X\}.$
\end{xlist}
(We understand ``closure'' to mean ``with respect to the
product topology'', since we don't have a norm until
we have made the above definition.)
It is easy to see that any point in the convex hull
of~(\ref{x.3parts}) whose second coordinate is $0$
is a convex linear combination of a point of
$B_V$ and a member of the set on the right-hand
side of~(\ref{x.enlarge}); but by assumption that set is
contained in $B_V;$ so in fact, $B_W\cap V=B_V,$
so the norm of $W$ indeed extends that of $V.$

But $B_W$ contains the translate $\{(x,1)\mid x\in X\}$
of $X,$ hence $X$ is contained in the closed ball of
radius $1$ about $(0,-1),$ hence has radius $\leq 1$ in $W.$
\end{proof}

Even the case $V=\E^n$ is not immune to this phenomenon,
since even in that case, the overspace $W$ of the above
construction is generally not Euclidean.
For instance, if we take for $X$ an equilateral triangle in $\E^2$
centered at the origin, it is not hard to see that
$\{(x-y)/2\mid x,y\in X\}$ is a hexagon whose vertices are
the midpoints of the edges a regular hexagon with the same circumcircle
as $X;$ so the radius of $X$ decreases in $W$ by the ratio of the
inradius to the circumradius of a regular hexagon,
in other words, by $\sqrt 3/2.$

This will not, of course, happen for a centrally symmetric $X$
(cf.\ Lemma~\ref{L.csym} or~(\ref{x.enlarge})).
Other cases for which it cannot happen depend on the metric
structure: if $X$ is a right or obtuse triangle in $\E^2,$
or more generally, any bounded set containing a diameter of
a closed ball in which it lies,
its radius clearly cannot go down under extension of the
ambient normed vector space (cf.\ Corollary~\ref{C.r=d/2}).

\section{Acknowledgements.}\label{S.ackn}
In addition to persons acknowledged above,
I am indebted to W.\,Kahan for showing me an exercise he had given
his Putnam-preparation class, of proving Lemma~\ref{L.S}
in $\E^3,$ and for subsequently pointing out that my solution to that
exercise worked in any normed vector space; to Nik Weaver for
information about his results in~\cite{NW}, and to David Gale
for pointing out the connection between the construction
of \S\ref{S.AE} and results in mathematical economics.


George M. Bergman\\
Department of Mathematics\\
University of California\\
Berkeley, CA 94720-3840\\
USA\\
gbergman@math.berkeley.edu
\end{document}